\definecolor{NoteColor}{rgb}{1,0,0}
\numberwithin{equation}{section}
\newtheorem{thm}{Theorem}[section]
\newtheorem{defi}[thm]{Definition}
\newtheorem{prop}[thm]{Proposition}
\newtheorem{lem}[thm]{Lemma}
\newtheorem{cor}[thm]{Corollary}
\newtheorem{con}[thm]{Conjecture}
\newtheorem{prob}[thm]{Problem}
\newtheorem{rem}[thm]{Remark}
\def\M{\mathcal M}
\def\a{\mathfrak a}
\def\Mod{\mathrm Mod}
\def\ga{\Gamma}
\def\lsp{\Gamma\backslash X}
\def\R{\mathbb R}
\def\Z{\mathbb Z}
\def\C{\mathbb C}
\def\M{\mathcal M}
\def\T{\mathcal T}
\def\Mod{\mathrm {Mod}}
\def\Diff{\mathrm {Diff}}
\def\Out{\mathrm {Out}}
\def\ga{\Gamma}
\def\T{\mathcal T}
\title{Complete invariant geodesic metrics on outer spaces and Jacobian varieties of tropical curves}
\author{Lizhen Ji\thanks{Partially Supported by NSF grant  DMS-1104696}
\\Department of Mathematics\\ University of Michigan\\ Ann Arbor, MI 48109}
\date{November 8, 2012}
\begin{document}

\maketitle

\begin{abstract}
Let $\mathrm{Out}(F_n)$ be the outer automorphism group of the free group $F_n$.
It acts properly on the outer space $X_n$ of marked metric graphs, which is a finite-dimensional 
infinite simplicial complex with some simplicial faces missing.
In this paper, we construct  complete geodesic metrics and complete piecewise
smooth Riemannian metrics on $X_n$
which are invariant under $\mathrm{Out}(F_n)$.
One key ingredient is the identification of metric graphs with tropical curves
and  the use of the tropical Jacobian map from the moduli space of tropical curves
to the moduli space of  principally polarized tropical abelian varieties. 
\end{abstract}

\tableofcontents

\section{Introduction}

Let $F_n$ be the free group on $n$ generators, $n\geq 2$,
and $\mathrm{Out}(F_n)=\mathrm{Aut}(F_n)/\mathrm{Inn}(F_n)$ its outer automorphism group.
The group $\text{Out}(F_n)$ is one of the most basic groups in combinatorial group theory 
and it has been
extensively studied. One reason is that it is related to a basic question: given a finitely
generated group $\ga$, how to get all possible generating sets of $\ga$ \cite[p. 120]{maks}.
One key ingredient in understanding the properties of $\mathrm{Out}(F_n)$
is to use its action on the outer space $X_n$ of marked metric graphs and the properties
of $X_n$.

According to the celebrated Erlangen program of Klein, 
an essential part of the geometry of a space is concerned with invariants of the isometries
(or symmetries) of the space. Similarly, an essential part of the geometry of a group is to find and
understand spaces on which the group acts and preserves suitable additional structures
of the space.
It is often natural to require that the space is a metric space and the action is by isometries.

In classical geometry,  a metric space is a complete Riemannian manifold 
such as the Euclidean
space, the sphere or the hyperbolic space.
But it is also important to consider metric spaces which are not manifolds.
Examples include Tits buildings and Bruhat-Tits buildings for real
and $p$-adic semisimple Lie groups. These are simplicial complexes
with a natural complete Tits metric so that the groups act isometrically and simplicially on them. 
Of course, the rich combinatorial structure also makes their geometry interesting. 

The outer space $X_n$  is a finite-dimensional infinite simplicial complex and hence admits a 
natural simplicial metric $d_0$ (Proposition \ref{simplicial-metric}), 
where each $n$-simplex is realized as 
the standard simplex in $\R^{n+1}$ with vertices $(1, 0, \cdots, 0), \cdots, (0, \cdots, 0, 1)$
 and given the metric induced from the metric of $\R^{n+1}$. 
Clearly, $d_0$ is invariant under $\mathrm{Out}(F_n)$. Since simplices of $X_n$ have
missing simplicial faces, $d_0$ is not a complete metric on $X_n$. 

Since the completeness of metrics is a basic condition which is
important for many applications,
one  natural problem is the following (see  \cite[Question 2]{briv} for more discussion).

\begin{prob}\label{prob-1.1}
Construct  complete geodesic metrics on the outer space $X_n$ which are invariant
under $\mathrm{Out}(F_n)$. 
\end{prob}

As explained above, solutions to this problem provide geometries of $\mathrm{Out}(F_n)$
in a certain sense. 
Another motivation for this problem
comes from the analogy with other important groups in geometric group
theory:
arithmetic subgroups $\ga$ of semisimple Lie groups $G$ and mapping class groups
$\Mod_{g,n}$ of surfaces of genus $g$ with $n$ punctures. 
A lot of work on $\mathrm{Out}(F_n)$ is motivated by results
obtained for these families of groups.

An arithmetic group $\ga$ acts on the symmetric space $G/K$,
and the mapping class group 
$\Mod_{g,n}$ acts on the Teichm\"uller space $\T_{g,n}$ of Riemann surfaces
of genus $g$ with $n$ punctures.
The outer space $X_n$ was introduced as a space on which $\mathrm{Out}(F_n)$ acts naturally.
These actions have played a crucial and similar role in understanding properties
of the groups.

Though these three classes of groups and spaces share many common properties,
there is one important difference between them, 
which might have not been emphasized enough:
the symmetric spaces $X$ and  the Teichm\"uller space $\T_{g,n}$ are smooth manifolds,
but the outer space $X_n$ is not a manifold. 

The symmetric space $X=G/K$ admits a complete $G$-invariant Riemannian metric 
and $\ga$ acts isometrically and properly on $X$. It is known that
$X$ is a contractible, nonpositively curved 
Riemannian manifold, and it is an important example of a Hadamard manifold.
Furthermore, the quotient $\lsp$ has finite volume.

The Teichm\"uller space $\T_{g,n}$ admits several complete Riemannian and Finsler 
metrics such as the Bergman and Teichm\"uller metrics, 
and $\Mod_{g,n}$ acts isometrically and properly on them. 
$\T_{g,n}$  also admits an incomplete negatively curved (Weil-Petersson) metric.
For all these natural metrics, the quotient $\Mod_{g,n}\backslash \T_{g,n}$
has finite volume.  
An interesting feature is that the known complete metrics on $\T_{g,n}$ are not nonpositively
curved, but the negatively curved Weil-Petersson metric is incomplete.
A natural and folklore conjecture is that {\em $\T_{g,n}$ does not admit
any complete nonpositively curved metric which is invariant under $\Mod_{g,n}$.}

Though $X_n$ is not a manifold, it
 is a locally finite simplicial complex and hence is a canonically stratified space
with a smooth structure in the sense of \cite{pf}.
Therefore, the following problem also seems to be  natural in view of the above analogy.

\begin{prob}\label{prob-1.2}
Construct  piecewise smooth Riemannian metrics on the outer space $X_n$ which are
 invariant under $\mathrm{Out}(F_n)$ and whose induced length metrics
 are complete geodesic metrics. Furthermore,
the quotient $\mathrm{Out}(F_n)\backslash X_n$ has finite volume.
\end{prob}

With respect to the smooth structure on $X_n$ as a canonically stratified space,
it is natural to require that the Riemannian metric on $X_n$ is smooth in the sense of \cite[Chapter 2]{pf}. 
If the second condition of finite volume 
is satisfied, it might give further justification to the {\em philosophy
that $\mathrm{Out}(F_n)$ is an analogue
of lattices of Lie groups}, which has played a guiding role
in understanding $\mathrm{Out}(F_n)$.

In this paper, we construct several invariant complete geodesic metrics and complete 
piecewise-smooth Riemannian metrics on $X_n$ and hence solve 
 Problem \ref{prob-1.1} and 
Problem \ref{prob-1.2}.
The basic idea is to identify metric graphs with tropical curves and use the
tropical  Jacobian map 
of tropical curves together with the simplicial metric $d_0$ on the outer space $X_n$.
For precise statements, see Theorems \ref{main1}, \ref{main2} and \ref{main4},
and Propositions \ref{finite-vol-1}, \ref{finite-vol-2} below. 
In the process, we also clarify some facts on the moduli space of tropical abelian varieties
(Proposition \ref{moduli-abelian}).

\begin{rem}
{\em It should be pointed that the tropical Jacobian map, or rather the period map of metric graphs,  
can be defined directly for compact metric
graphs. But without interpretation through tropical curves and their Jacobians,
it is a mystery why it is defined in this way. There are infinitely many different ways
to define a map from the outer space $X_n$ to the symmetric space 
$\mathrm{GL}(n, \R)/\mathrm{O}(n)$ of positive definite matrices
(see Remark \ref{many-def} below). In any case, the connection between metric graphs
and tropical curves is interesting and our discussion here also
clarifies some questions in tropical geometry. It is expected that
the geometry of the outer space will be helpful to the study of tropical curves and their moduli spaces. See Remark \ref{trop-moduli} below.
}
\end{rem}

\begin{rem}
{\em 
It is known that $X_n$ admits a non-symmetric geodesic metric, called
the Lipschitz (or Thurston)  metric. Briefly, the non-symmetric
 Lipschitz metric is not complete, and its associated (symmetric) is not geodesic.
For its definition, some properties and applications,
see \cite{frm} \cite{alb} \cite{al}. 
}
\end{rem}

\begin{rem}
{\em 
It seems reasonable to conjecture
 that  $X_n$ does not admit any complete CAT(0)-metric 
which is invariant under $\mathrm{Out}(F_n)$.
For example, it was shown in \cite{bri} that the standard spine of $X_n$ does not admit any
invariant piecewise-Euclidean or piecewise-hyperbolic metric of nonpositive curvature.
It was shown in \cite{ger}  \cite{briv1} 
that $\mathrm{Out}(F_n)$ is not a CAT(0)-group, i.e., it cannot act
isometrically and cocompactly on a CAT(0)-space.
Since the quotient $\mathrm{Out}(F_n)\backslash X_n$ is not compact, the
above result does not exclude the possibility
that $X_n$ might admit a CAT(0)-metric which is invariant
under $\mathrm{Out}(F_n)$. 
}
\end{rem}

\vspace{.1in}

\noindent{\em Acknowledgments.} The idea of using the period map of metric graphs
in Equation (\ref{period-map-1})
to pull back and define a metric on the outer space
was suggested by Enrico Leuzinger, partly motivated by the work
\cite{jl},  and I would like to thank him for sharing this idea.
Though the period map fails to be injective (unlike the case of Riemann surfaces), the
simple idea of combining the period map with the canonical simplicial metric
$d_0$ on the outer space $X_n$ is crucial to overcome this
 problem. I would also like to thank Mladen Bestvina for helpful information
 about CAT(0)-metrics on the outer space, and Athanase Papadopoulos for carefully reading
 a preliminary version of this paper. 

\section{Definitions and basic facts on $\mathrm{Out}(F_n)$ and the outer space $X_n$}

In this section, we recall some basic definitions and results on the outer automorphism
group $\mathrm{Out}(F_n)$ and the outer space $X_n$, and relations with arithmetic groups and
mapping class groups.

Probably the most basic arithmetic subgroup of a semisimple Lie group
is $ \mathrm{SL}(n, \Z)\subset  \mathrm{SL}(n, \R)$.
A closely related group is the arithmetic subgroup $\mathrm{GL}(n, \Z)$
of a reductive Lie group $\mathrm{GL}(n, \R)$, which contains $\mathrm{SL}(n, \Z)$ as a subgroup
of index 2.

The group $\mathrm{Out}(F_n)$ is related to $\mathrm{GL}(n, \Z)$ as follows. 
Since the abelinization of $F_n$ is $\Z^n$ and $\Out(\Z^n)=\mathrm{Aut}(\Z^n)=\mathrm{GL}(n, \Z)$, 
there is a homomorphism $A: \mathrm{Out}(F_n)\to \mathrm{GL}(n, \Z)$.
It is known that for every $n\geq 3$, 
the homomorphism $A: \mathrm{Out}(F_n)\to \mathrm{GL}(n, \Z)$ is {\em surjective with an infinite kernel}
and that when $n=2$, $A: \Out(F_2)\to \mathrm{GL}(2, \Z)$ is an isomorphism.
%The surjectivity follows by noting that a set of generators of $\mathrm{GL}(n, \Z)$ is
%contained in the image, and the kernel is an infinite torsion-free subgroup of $\mathrm{Out}(F_n)$.
See \cite[p. 2]{vog} for references.

Let $S_g$ be a compact oriented surface of genus $g$,
$\Diff^+(S_g)$ its group of orientation preserving diffeomorphisms,
and $\Diff^0(S_g)$ its identify component.
Then the quotient group
$\Mod_g=\Diff^+(S_g)/\Diff^0(S_g)$ is the {\em mapping class group} of $S_g$.

By the Dehn-Nielson theorem, $\Mod_g=\Out(\pi_1(S_g))$.
%This is suggestive of the relation with $\mathrm{Out}(F_n)$, though of course that $\pi_1(S_g)$
%is not isomorphic to $F_n$.
Since the abelinization of $\pi_1(S_g)$ is $H_1(S_g, \Z)$,  which is isomorphic to $\Z^{2g}$,
we also get a homomorphism
$\Mod_{g}\to \mathrm{GL}(2g, \Z)$.
Since elements of $\Mod_g$ also preserve the intersection form
on $H_1(S_g, \Z)$, which is a symplectic form, the image of $\Mod_{g}$ lies in 
$ \mathrm{Sp}(2g, \Z)$. Therefore, we have a homomorphism 
$$j: \Mod_{g}\to  \mathrm{Sp}(2g, \Z).$$
It is known that $j$ is surjective with an infinite kernel when $g\geq 2$, and 
$j$ is an isomorphism when $g=1$, i.e., $\Mod_1\cong \mathrm{SL}(2, \Z)$.

%If $S_{g,n}$ is a compact oriented surface of genus $g$ with $n$ punctures, then
%$\pi_1(S_{g,n})$ is a free group $F_N$, and 
%the mapping class group $\Mod_{g,n}$ of $S_{g,n}$
%can be canonically realized as a subgroup of $Out(F_N)$.
%Specifically,  loops around each puncture of $S_{g,n}$ 
%correspond to a conjugacy class of nontrivial
%elements of $\pi_1(S_{g,n})$.
%Then the mapping class group $\Mod_{g,n}$ of $S_{g,n}$ consists of  elements of 
%of $Out(F_N)$ that stabilize the collection of $n$ conjugacy classes corresponding to
%the punctures of $S_{g,n}$. 
%(We note that elements of $\Diff^+(S_{g,n})$ can permute the ends of
%$S_{g,n}$.)
%See \cite[p. 2]{vog} for references.
%This inclusion $\Mod_{g,n}\subset Out(F_N)$ can be used to show that $\Mod_{g,n}$
%is of type VFL (virtually finite free). See \cite[p. 114]{cv}.

The above discussions 
establish close relations between $\mathrm{Out}(F_n)$, and the mapping class group $\Mod_g$
and arithmetic groups such as $\mathrm{GL}(n, \Z)$ and $\mathrm{Sp}(2g, \Z)$.
%Note that $ \mathrm{SL}(2, \Z)$ is the most basic example of arithmetic subgroups.
%Therefore we have three important generalizations of $ \mathrm{SL}(2, \Z)$.
%\begin{enumerate}
%\item  Arithmetic subgroups $\ga$ of semisimple Lie groups $G$
%such as  $ \mathrm{SL}(n, \Z)$ and  $ \mathrm{Sp}(2g, \Z)$.
%\item  Mapping class groups $\Mod_g$ and $\Mod_{g,n}$.
%\item  Outer automorphism group $\mathrm{Out}(F_n)$. 
%\end{enumerate}
These groups share many common properties, for example, group theoretical
properties and cohomological properties. 
%, many of which are proved using actions on appropriate spaces as mentioned before. For %some statements and references,
%see \cite{ji1}. 
%\subsection{Outer spaces and analogy with other spaces}
One important approach to study groups is to  study their actions on suitable spaces.
But it is often difficult to find spaces with finiteness properties.

As mentioned before,  an arithmetic subgroup $\ga \subset G$
 acts isometrically and properly
on the symmetric space $X=G/K$, 
 and $X$ is a finite dimensional model of the universal space $\underline{E}\ga$
for proper actions of $\ga$.

An important space for $\Mod_{g,n}$ to act on
is the Teichm\"uller space $\T_{g,n}$, which is the space of marked
complex structure of $S_{g,n}$, i.e., marked Riemann surfaces of genus $g$ with $n$
punctures. The action  of $\Mod_{g,n}$ on $\T_{g,n}$
comes from changing the markings.  
It is an important result of Teichm\"uller that the Teichm\"uller space $\T_{g,n}$ is a
contractible manifold and that the action is proper and isometric with respect
to the Teichm\"uller metric (and also many other metrics). 
It is also known that $\T_{g,n}$ is a finite dimensional model of $\underline{E}\Mod_g$.

Motivated by Teichm\"uller space, Culler and Vogtmann \cite{cv}
 introduced the outer space $X_n$
of marked metric graphs.

A metric graph $(G, \ell)$ is a graph with a positive length $\ell(e)$ assigned to each edge $e$.
Note that the fundamental group of a finite
graph is a free group. 
We only consider graphs $G$ with genus equal to $n$, i.e., $\pi_1(G)\cong F_n$
(or rather $H_1(G, \Z)\cong \Z^n$).
We also assume that $G$ {\em has no vertex of valence 1 or 2, and no separating
edge (or bridge).} 

Let $R_n$ be the wedge of $n$ circles, also called {\em a rose with $n$ petals.}
Then {\em a marking} on a metric graph 
$(G, \ell)$ is an homotopy equivalence $h: G \to R_n$, which amounts
to  a choice
of a set of generators of $\pi_1(G)$. {\em A marked metric graph} is a metric graph
$(G, \ell)$ 
together with a marking $h$,  denoted
by $(G, \ell, h)$.

Two marked metric graphs $(G, \ell, h), (G', \ell',  h')$ are called 
{\em equivalent} if there exists an
isometry between $(G, \ell)$ and $(G', \ell')$ that commutes with the markings $h, h'$ up to homotopy.

Usually we {\em normalize} (or {\em scale}) the metric $\ell$ of each metric graph  $(G, \ell)$
so that the sum of lengths of all edges is equal to 1. (In Remark \ref{trop-moduli}, we will
also use metric graphs whose edge lengths are not normalized.)

Then the {\em outer space} (or {\em the reduced outer space})
$X_n$ is defined to be the set of {\em equivalence classes of marked metric graphs}
of total length 1,
$$X_n=\{(G, \ell, h)\}/\sim.$$

Since elements of $\mathrm{Out}(F_n)$ can be realized by homotopy equivalences of $R_n$,
$\mathrm{Out}(F_n)$ acts on $X_n$ by changing the markings of metric graphs.

\begin{prop} The outer space $X_n$ is a locally finite simplicial complex 
of dimension $3g-4$, and its simplices are partially open, i.e.,
some simplicial faces of the simplices are missing. 
\end{prop}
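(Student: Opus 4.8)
\emph{Proof idea.} The plan is to realize $X_n$ as a union of standard simplices with certain faces removed, glued along their remaining faces, in the manner of Culler--Vogtmann \cite{cv}. Fix a combinatorial type of graph $G$ --- connected, with $b_1(G)=n$, with no vertex of valence $\le 2$, and with no bridge --- together with a marking $h$. If $G$ has $E$ edges $e_1,\dots,e_E$, the normalized metrics on $G$ are exactly the points of the open standard $(E-1)$--simplex $\sigma_{(G,h)}=\{(\ell(e_1),\dots,\ell(e_E)) : \ell(e_i)>0,\ \sum_i \ell(e_i)=1\}$, and assigning to a metric its equivalence class defines a map $\sigma_{(G,h)}\to X_n$. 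I would first check that this map is injective: an equivalence between two normalized metrics on $G$ produces a graph automorphism $\psi$ of $G$ with $h\circ\psi\simeq h$, hence with $\psi\simeq\mathrm{id}_G$, and for graphs without bridges and without vertices of valence $\le 2$ this forces $\psi=\mathrm{id}_G$ (part of the standard analysis in \cite{cv}). Letting $(G,h)$ run over all marked graphs and gluing $\sigma_{(G,h)}$ to $\sigma_{(G',h')}$ along the face on which the lengths of a set of edges tend to $0$, whenever $G'$ is obtained from $G$ by collapsing the corresponding subforest with its induced marking, yields the desired (ideal) simplicial structure; if one insists on the strict notion of simplicial complex, a barycentric subdivision achieves it. The point that needs care here is that the equivalence relation $\sim$ on marked metric graphs is generated precisely by isometries together with these subforest collapses.

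The ``partially open'' statement is then the description of which faces of $\sigma_{(G,h)}$ survive in $X_n$. Sending the lengths of a set $S$ of edges to $0$ corresponds to collapsing $S$. If $S$ spans a forest, the collapse is a homotopy equivalence; the quotient $G/S$ is again connected with $b_1=n$, has no vertex of valence $\le 2$ (collapsing a tree on $k$ vertices creates a vertex of valence $\ge k+2$) and no bridge, and carries an induced marking, so this face is identified with a point of $\sigma_{(G/S,h')}\subset X_n$. If instead $S$ contains a circuit, then $b_1(G/S)<n$ and no such point of $X_n$ exists, so this face is absent. Thus the faces of $\sigma_{(G,h)}$ present in $X_n$ are exactly those obtained by collapsing subforests, and the simplices of $X_n$ are partially open.

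For the dimension I would run the Euler-characteristic count. If $G$ is connected with $V$ vertices, $E$ edges, and every vertex of valence $\ge 3$, then $b_1(G)=E-V+1=n$ gives $V=E-n+1$, while $2E=\sum_v\deg(v)\ge 3V$ forces $E\le 3n-3$, with equality exactly when $G$ is trivalent; and trivalent bridgeless graphs of first Betti number $n$ exist for every $n\ge 2$ (the theta graph, $K_4$, prism graphs, and so on). Hence $\dim\sigma_{(G,h)}=E-1\le 3n-4$, the bound being attained, so $\dim X_n=3n-4$.

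Finally, local finiteness --- where the genuine combinatorial work lies --- follows by bounding the simplices around a point. Such a point lies in the interior of a unique open simplex $\sigma=\sigma_{(G,h)}$, and a small neighborhood of it meets only $\sigma$ and the simplices $\tau=\sigma_{(G',h')}$ having $\sigma$ as a face. These $\tau$ correspond to the ``blow-ups'' of $(G,h)$: graphs obtained by replacing each vertex of $G$ by a finite tree and distributing the incident half-edges among the vertices of that tree, with the forced marking. Any such $G'$ still has $b_1=n$, hence at most $3n-3$ edges, so it is built from $G$ by adding at most $3n-3-E(G)$ edges; there are therefore only finitely many combinatorial types of blow-up and only finitely many $\tau$ containing $\sigma$. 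Hence every point has a neighborhood meeting finitely many simplices and $X_n$ is locally finite. I expect the principal obstacle to be exactly this bookkeeping of the gluing --- checking that the subforest-collapse identifications are consistent and generate precisely $\sim$, so that $X_n$ is a well-defined locally finite (ideal) simplicial complex --- after which the dimension bound and the description of the missing faces are routine.
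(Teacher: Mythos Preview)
Your proposal is correct and follows the same approach as the paper. The paper does not give a formal proof of this proposition; it only sketches the simplicial structure after the statement---each marked graph $(G,h)$ contributes an open simplex parametrized by edge lengths, a face survives precisely when the collapsed edges contain no loop, and maximal simplices come from trivalent graphs---and then refers to \cite{vog} for details. Your write-up covers exactly this picture and goes further: you supply the injectivity check for the parametrization $\sigma_{(G,h)}\to X_n$, the explicit Euler-characteristic computation $E\le 3n-3$ giving $\dim X_n=3n-4$, and an argument for local finiteness via bounding blow-ups, none of which the paper spells out. So the approaches coincide, with yours being the more detailed of the two.
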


Briefly, for each marked graph $(G, h)$, let 
$e_1, \cdots, e_k$ be its edges. Then variation of the edge lengths
$\ell(e_1), \cdots, \ell(e_k)$ under the conditions $\ell(e_1), \cdots,
\ell(e_k)>0$, $\sum_{i=1}^k \ell(e_i)=1$ fills out an {\em open}
$(k-1)$-simplex $\Sigma_{(G, h)}$.
Its simplicial faces correspond to vanishing of some edge lengths $\ell(e_i)$.
For any subset $e_{i_1}, \cdots, e_{i_r}$ of edges, the simplicial face of $\Sigma_{(G, h)}$
defined by $\ell(e_{i_1})=\cdots =\ell(e_{i_r})=0$ is contained in the outer space $X_n$
if and only if no loop of $G$, i.e., a nontrivial element of $\pi_1(G)$,
is contained in the union of these edges $e_{i_1}, \cdots, e_{i_r}$. (Note that if an edge length
$\ell(e)=0$, then the edge $e$ is removed from the graph, and if the total length of a loop
is equal to zero, the genus of the resulting graph will decrease.)

The outer space $X_n$ is the union of  open simplexes $\Sigma_{(G, h)}$,
and its maximal dimensional simplices correspond to trivalent graphs $(G, h)$.
There are several ways to put a topology on $X_n$.
Probably the most direct way is to glue up the simplicial topology of these simplices.
See \cite{vog} for more details and references on this result and several others recalled below.

An important result due to Culler-Vogtmann \cite{cv} is

\begin{thm}\label{cont}
The outer space $X_n$ is contractible. 
\end{thm}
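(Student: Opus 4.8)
The plan is to follow the original Culler--Vogtmann argument, which proceeds by a deformation retraction of $X_n$ onto its \emph{spine} $K_n$ followed by a Morse-theoretic collapse of $K_n$ to a point. First I would recall that $X_n$ carries a natural simplicial decomposition whose open simplices are the cells $\Sigma_{(G,h)}$ described above; there is an obvious partial order on marked graphs where $(G',h') \leq (G,h)$ if $(G',h')$ is obtained from $(G,h)$ by collapsing a forest (equivalently, $\Sigma_{(G',h')}$ is a face of $\Sigma_{(G,h)}$ lying in $X_n$). The spine $K_n$ is the geometric realization of the poset of these marked graphs, embedded in $X_n$ as the union of barycentric-type cells; the first step is to show that $X_n$ deformation retracts onto $K_n$ by pushing each point away from the missing (``non-reduced'') faces, i.e.\ by flowing along directions that increase the lengths of edges belonging to embedded loops. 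This retraction is $\mathrm{Out}(F_n)$-equivariant since it is defined purely in terms of the combinatorics of the graphs, but equivariance is not needed for contractibility per se.

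The second and main step is to contract $K_n$ itself. Here I would introduce a norm on marked graphs --- for a rose-equivalence $h\colon G\to R_n$, measure the total length of the images in $G$ of a fixed finite ``candidate'' set of conjugacy classes in $F_n$ (for instance the basis elements and their pairwise products), which assigns to each vertex of $K_n$ a well-defined real number. The claim is that this norm is a Morse function in the sense that, away from the rose $R_n$ itself (which is the unique minimum), every marked graph $(G,h)$ admits a ``blow-down'' or ``ideal edge'' move strictly decreasing the norm, and that the sublevel sets flow into one another. Iterating these moves collapses the descending links and retracts $K_n$ onto the single vertex corresponding to the standard rose. Combining the two retractions gives that $X_n$ is homotopy equivalent to a point.

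The delicate point --- and the part I expect to be the main obstacle --- is the verification that the candidate-set norm really is Morse, i.e.\ that at every non-minimal vertex the descending link is contractible (indeed nonempty and ``combable'' toward a canonical reduction move). This requires the combinatorial heart of Culler--Vogtmann: analyzing ``ideal edges'' of a graph, showing that the set of norm-reducing ideal edges forms a poset with contractible geometric realization, and checking that simultaneous blow-downs can be performed compatibly. A second technical nuisance is that $X_n$ is not locally compact as a whole and has infinitely many cells, so one must be careful that the deformation retraction of $X_n$ onto $K_n$ is continuous in the weak topology and that the Morse flow on $K_n$ is locally finite; both are handled by the local finiteness asserted in the preceding proposition. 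Since this is a known theorem, in the paper I would simply cite \cite{cv} for the full argument and record only this outline.
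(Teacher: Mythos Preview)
Your outline of the Culler--Vogtmann argument is accurate, and your final sentence matches exactly what the paper does: the paper gives no proof at all, simply stating the theorem and citing \cite{cv}. So your proposal is correct and, in its conclusion, takes the same approach as the paper (namely, cite the original source rather than reproduce the argument).
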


Clearly, we can give each $k$-dimensional simplex $\Sigma_{(G,h)}$ 
the {\em standard simplicial metric}
by identifying it with the standard simplex in $\R^{k+1}$,
$\{(x_1, \cdots, x_{k+1})\in \R^{k+1}\mid x_1, \cdots, x_k >0, x_1+\cdots + x_{k+1}=1\}$,
whose edges have  length $\sqrt{2}$.
This metric extends and defines a metric on the closure of $\Sigma_{(G,h)}$
in $X_n$. It is also clear that these metrics are compatible on their common
simplicial faces. 
Therefore, it defines a {\em length function}, or a {\em metric} on $X_n$, denoted by $d_0$. 

\begin{prop}\label{simplicial-metric}
The length metric $d_0$ on $X_n$ is an incomplete metric on $X_n$
which is invariant under $\mathrm{Out}(F_n)$, 
and its restriction to each simplex
$\Sigma_{(G, h)}$ is the length metric induced from a Riemannian metric.
\end{prop}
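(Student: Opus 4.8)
\emph{Proof proposal.} I would split the statement into four assertions — that $d_0$ is a metric, that it is $\mathrm{Out}(F_n)$-invariant, that it is incomplete, and that its restriction to a simplex comes from a Riemannian metric — and treat them in order of increasing difficulty. \emph{Step 1: $d_0$ is a well-defined length function, and on each simplex it is the length metric of a flat Riemannian metric.} By construction the open simplex $\Sigma_{(G,h)}$ of dimension $k-1$ is identified with the relative interior of the standard $(k-1)$-simplex sitting in the affine hyperplane $H_k=\{x\in\R^{k}:\sum_{i=1}^{k}x_i=1\}$, and restricting the Euclidean metric of $\R^{k}$ to $H_k$ gives a flat Riemannian metric on it; since $\Sigma_{(G,h)}$ — and the portion $\overline{\Sigma}_{(G,h)}\cap X_n$ of its closure lying in $X_n$ — is convex, the length metric of this Riemannian metric is just the restriction of the ambient Euclidean distance. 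These metrics on the various simplices agree along common faces (all simplices are standard, with edges of length $\sqrt 2$, and the face identifications are simplicial isometries), so the length of a \emph{piecewise-Euclidean} path in $X_n$ — a locally finite concatenation of segments, each lying in a single closed simplex — is well defined, and one sets $d_0(x,y)=\inf\{\operatorname{length}(\sigma)\}$ over such paths from $x$ to $y$. Symmetry and the triangle inequality are then automatic, and $d_0<\infty$ because $X_n$ is connected (indeed contractible, Theorem \ref{cont}) and locally finite, so any two points lie in a common finite subcomplex. Finally, a standard folding argument — push any excursion of a path outside $\overline{\Sigma}_{(G,h)}$ back onto this convex cell using that nearest-point projection onto a convex subset of a Euclidean simplex is $1$-Lipschitz, and iterate over the finitely many simplices the path meets — shows that no path can shortcut between two points of $\overline{\Sigma}_{(G,h)}$; hence $d_0$ restricted to $\overline{\Sigma}_{(G,h)}\cap X_n$ coincides with the flat Riemannian length metric just described, which is the last assertion.

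\emph{Step 2: $d_0$ separates points and is $\mathrm{Out}(F_n)$-invariant.} Positive-definiteness is where local finiteness is essential. Given $x\in X_n$, only finitely many open simplices meet the open star $\operatorname{St}(x)$, so one may choose $\varepsilon>0$ so that the $\varepsilon$-ball about $x$ (measured in each adjacent simplex metric) lies in $\operatorname{St}(x)$ and misses every closed simplex not containing $x$; then any piecewise-Euclidean path from $x$ to a point $y\notin\operatorname{St}(x)$ has length $\ge\varepsilon$, while for $y\ne x$ inside $\operatorname{St}(x)$ one reduces to the finite Euclidean subcomplex $\overline{\operatorname{St}(x)}$, on which $d_0$ is a genuine metric by the classical fact about finite $M_0$-simplicial complexes. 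Thus $d_0$ is a metric inducing the given topology; this is in essence Bridson's theorem that a locally finite $M_0$-polyhedral complex with finitely many isometry types of cells carries a length metric that is a complete geodesic metric, the only difference being that $X_n$ is not the full complex but an open dense subcomplex from which certain faces have been deleted — which affects none of these \emph{local} statements. Invariance is then formal: $\phi\in\mathrm{Out}(F_n)$ acts on a marked metric graph by changing only its marking, hence maps each open simplex $\Sigma_{(G,h)}$ isometrically onto another open simplex $\Sigma_{(G,h')}$ (indeed by the identity on the underlying metric graph) compatibly with the face identifications, so it carries piecewise-Euclidean paths to piecewise-Euclidean paths of the same length and therefore preserves $d_0$.

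\emph{Step 3: $d_0$ is incomplete.} Here I would use precisely the missing faces. Fix any trivalent marked graph $(G,\ell,h)\in X_n$; since the genus $n\ge 2$, $G$ contains an embedded cycle $\gamma$ with edge set $\{e_1,\dots,e_r\}$, and as $G$ has strictly more edges than $\gamma$ does, the face $F$ of $\overline{\Sigma}_{(G,h)}$ cut out by $\ell(e_1)=\cdots=\ell(e_r)=0$ is nonempty. By the criterion recalled in the text (a face belongs to $X_n$ iff no loop of $G$ lies in the collapsed edges), $F$ is not contained in $X_n$; concretely $F$ represents the metric graph obtained from $G$ by collapsing $\gamma$, whose genus is $<n$. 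Pick $p_0$ in the relative interior of $F$ and let $p_1=(G,\ell,h)$; the open segment $(p_0,p_1)$ lies in the \emph{open} simplex $\Sigma_{(G,h)}\subseteq X_n$ and has finite Euclidean length, so the points $q_m=(1-\tfrac1m)p_0+\tfrac1m p_1$ form a $d_0$-Cauchy sequence in $X_n$. Its only candidate limit in the ambient picture is $p_0\notin X_n$, and a short argument — using the local estimate of Step 2, or the equality $d_0=d_{\mathrm{Euc}}$ on $\overline{\Sigma}_{(G,h)}\cap X_n$ from Step 1 — confirms that $(q_m)$ converges to no point of $X_n$. Hence $(X_n,d_0)$ is incomplete.

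The one genuinely delicate ingredient, and the place where a full write-up would need real care, is the local analysis behind Step 2 — the comparison of $d_0$ with the ambient Euclidean distances near a point and near a missing face, equivalently the fact that $d_0$ separates points and induces the correct topology — which is cleanest within Bridson's theory of $M_\kappa$-polyhedral complexes. By contrast the flat-Riemannian assertion of Step 1 and the $\mathrm{Out}(F_n)$-invariance are essentially formal once the gluing has been set up.
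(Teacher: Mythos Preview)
Your proposal is correct and follows essentially the same approach as the paper's proof, only with considerably more care. The paper argues in three sentences: the restriction to each simplex is the flat Euclidean length metric because ``there is no shortcut by going through neighboring simplices'' (asserted without justification); incompleteness holds because the closure in $X_n$ of each open simplex is noncompact yet has diameter $\sqrt{2}$; and invariance holds because the $\mathrm{Out}(F_n)$-action preserves the simplex metrics. Your Step~1 folding/projection argument supplies the missing justification for the no-shortcut claim, your Step~2 addresses positive-definiteness (which the paper does not mention at all), and your Step~3 replaces the paper's noncompact-but-bounded observation with an explicit Cauchy sequence running toward a missing face---a different packaging of the same idea. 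In short, you have written out what the paper leaves implicit; there is no genuine divergence in strategy.
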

\begin{proof}
Since all top dimensional simplices in $X_n$ are isometric, 
it is clear that the restriction to each simplex $\Sigma_{(G, h)}$ is the length metric
of the flat Riemannian metric of the ambient Euclidean space $\R^k$,
i.e., for every two points in $\Sigma_{(G, h)}$,
there is no shortcut by going through neighboring simplices.
Since the closure of each open simplex $\Sigma_{(G, h)}$ in $X_n$ is noncompact
and its diameter is equal to $\sqrt{2}$,
$d_0$ is an incomplete metric. Since the action of $\mathrm{Out}(F_n)$ on $X_n$ preserves
the metrics of the simplices, the invariance of $d_0$ under $\mathrm{Out}(F_n)$ is clear.
\end{proof}

See \cite[\S 1]{bri} for related discussion on how to glue up metrics on simplices. 

\begin{rem}
{\em It is natural to conjecture that 
the length metric $d_0$ on $X_n$ is a {\em geodesic metric}
and is also {\em geodesically convex} in the sense that for every two points
$p, q\in X_n$, there exists a unique geodesic segment connecting them.
This is stronger than the result in Theorem \ref{cont} on the contractibility of $X_n$.
If this conjecture is true, then $d_0$ is a good analogue of the Weil-Petersson metric on  
Teichm\"uller space. Indeed, though the Weil-Petersson metric
of  Teichm\"uller space is incomplete,
the Weil-Petersson metric is geodesically convex  according to a result of Wolpert \cite{wol87}.
One might try to use this metric to understand elements
of $\mathrm{Out}(F_n)$ as the Weil-Petersson metric was used in the paper \cite{daw}.
On the other hand, the situation is more complicated since 
axes of fully irreducible outer automotphisms are  not unique in general
\cite{bes} \cite{ham1} \cite{ham2}.
}
\end{rem}

\begin{prop}\label{finite-cell} 
Under the action of $\mathrm{Out}(F_n)$ on $X_n$, there are only finitely many orbits of 
open simplices $\Sigma_{(G, h)}$.
\end{prop}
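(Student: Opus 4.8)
The plan is to reduce the statement to an elementary counting fact about finite graphs. First I would observe that the open simplex $\Sigma_{(G,h)}$ depends only on the equivalence class of the marked metric graph, and indeed only on the pair $(G,h)$ consisting of the graph and the marking (the choice of edge lengths is irrelevant, since varying them within the open simplex produces equivalent marked graphs after scaling). Next, I claim that two simplices $\Sigma_{(G,h)}$ and $\Sigma_{(G',h')}$ lie in the same $\mathrm{Out}(F_n)$-orbit if and only if the underlying graphs $G$ and $G'$ are isomorphic as topological graphs, forgetting both the metrics and the markings. Indeed, if $\phi\colon G\to G'$ is a graph isomorphism, then $h'\circ\phi$ is another marking of $G$, and $f=h'\circ\phi\circ h^{-1}$ (with $h^{-1}$ a homotopy inverse of $h$) is a homotopy self-equivalence of $R_n$, hence represents an element $\alpha\in\mathrm{Out}(F_n)$. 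By the definition of the action, $\alpha\cdot(G,\ell,h)=(G,\ell,f\circ h)$ is equivalent to $(G',\ell',h')$, so $\alpha$ carries $\Sigma_{(G,h)}$ isometrically onto $\Sigma_{(G',h')}$. Conversely, the action of any $\alpha\in\mathrm{Out}(F_n)$ changes only the marking, not the underlying graph.

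Given this correspondence, it suffices to bound the number of isomorphism classes of finite connected graphs $G$ with $\pi_1(G)\cong F_n$, with no vertex of valence $1$ or $2$, and no separating edge. Let $V$ and $E$ be the vertex and edge sets of such a $G$. Since $b_1(G)=n$, the Euler characteristic relation gives $|E|-|V|=n-1$. Since every vertex has valence at least $3$, summing valences over vertices gives $2|E|=\sum_{v\in V}\deg(v)\geq 3|V|$. Combining the two relations, $3|V|\leq 2|E|=2|V|+2n-2$, so $|V|\leq 2n-2$ and hence $|E|=|V|+n-1\leq 3n-3$. (This also recovers the bound $\dim\Sigma_{(G,h)}=|E|-1\leq 3n-4=3g-4$ asserted above.)

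Finally, the number of isomorphism classes of graphs, allowing loops and multiple edges, with at most $2n-2$ vertices and at most $3n-3$ edges is clearly finite: one may construct any such graph by fixing a vertex set of bounded cardinality and then assigning to each of the finitely many edges an (unordered) pair of endpoints, and there are only finitely many such configurations up to relabeling the vertices. Hence there are only finitely many isomorphism classes of admissible graphs $G$, and therefore, by the orbit correspondence of the first step, only finitely many $\mathrm{Out}(F_n)$-orbits of open simplices $\Sigma_{(G,h)}$ in $X_n$.

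The only point that requires genuine care is the orbit correspondence in the first step, namely the verification that every isomorphism between two admissible graphs is induced by an element of $\mathrm{Out}(F_n)$ acting by change of marking; this rests on the standard fact that homotopy self-equivalences of the rose $R_n$ realize all of $\mathrm{Out}(F_n)$. The Euler-characteristic estimate and the concluding count are routine.
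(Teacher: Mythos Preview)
Your proof is correct and follows essentially the same approach as the paper: both reduce the question to the finiteness of isomorphism classes of finite graphs of genus $n$ with all vertices of valence at least $3$ and no separating edges. Your version is simply more explicit, supplying the Euler-characteristic bounds $|V|\le 2n-2$ and $|E|\le 3n-3$ where the paper just asserts this finiteness as clear.
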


Briefly, since the quotient $\mathrm{Out}(F_n)\backslash X_n$ is the moduli space
of metric graphs of genus $n$,
 the $\mathrm{Out}(F_n)$-orbits of open simplices in $X_n$ correspond to 
equivalence classes of finite graphs of genus $n$
with only vertices of valence at least 3 and no separating edges.
Clearly, there are only finitely many such graphs up to equivalence. 

\begin{prop}
The group $\mathrm{Out}(F_n)$ acts properly on the outer space $X_n$.
\end{prop}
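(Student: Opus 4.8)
The plan is to verify properness in its topological form: to show that for every compact set $K\subseteq X_n$ the set $\Gamma_K:=\{g\in\mathrm{Out}(F_n):gK\cap K\neq\emptyset\}$ is finite. The two essential inputs are the local finiteness of the simplicial complex $X_n$ and the finiteness of the stabilizers of points; the rest is formal.

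First I would reduce to a statement about open simplices. Since $X_n$ is locally finite, each point has a neighborhood meeting only finitely many of the open simplices $\Sigma_{(G,h)}$, so by compactness $K$ meets only finitely many of them, say $\tau_1,\dots,\tau_N$. The group $\mathrm{Out}(F_n)$ acts cellularly: an element $g=[\phi]$ sends $(G,\ell,h)$ to $(G,\ell,\phi\circ h)$, hence carries the open simplex $\Sigma_{(G,h)}$ affinely onto the open simplex $\Sigma_{(G,\phi\circ h)}$, so every translate $g\tau_a$ is again an open simplex of $X_n$. If $gK\cap K\neq\emptyset$, I pick $y$ in the intersection; then $y\in\tau_b$ for some $b$ and $g^{-1}y\in\tau_a$ for some $a$, so $y\in g\tau_a\cap\tau_b$, and since distinct open simplices of a simplicial complex are disjoint this forces $g\tau_a=\tau_b$. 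Hence
\[
\Gamma_K\ \subseteq\ \bigcup_{a,b=1}^{N}\ \{\,g\in\mathrm{Out}(F_n):g\tau_a=\tau_b\,\},
\]
and because $\{g:g\tau_a=\tau_b\}$ is either empty or a single left coset of the stabilizer $\mathrm{Stab}(\tau_a)$, it is enough to prove that each $\mathrm{Stab}(\tau_a)$ is finite.

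For that, writing $\tau_a=\Sigma_{(G,h)}$, an element of $\mathrm{Stab}(\tau_a)$ restricts to an affine automorphism of the closed simplex $\overline{\tau_a}$ and so permutes its (finitely many) vertices, which correspond to the edges of $G$; the kernel of this permutation representation fixes $\overline{\tau_a}$ pointwise, hence fixes any chosen $x\in\tau_a$ and so lies in $\mathrm{Stab}(x)$. It therefore remains to show that $\mathrm{Stab}(x)$ is finite for $x=(G,\ell,h)$. If $g=[\phi]$ fixes $x$, then $(G,\ell,\phi\circ h)\sim(G,\ell,h)$, so by definition of equivalence there is an isometry $\psi$ of the finite metric graph $(G,\ell)$ with $h\circ\psi$ homotopic to $\phi\circ h$; composing with a homotopy inverse of $h$ gives $g=[\,h\circ\psi\circ h^{-1}\,]$. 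Thus $\mathrm{Stab}(x)$ is the image of the map $\psi\mapsto[\,h\circ\psi\circ h^{-1}\,]$ from the isometry group $\mathrm{Isom}(G,\ell)$, and this group is finite because an isometry of $(G,\ell)$ preserves the set of points of local valence $\geq 3$, which — as $G$ has no vertex of valence $1$ or $2$ — is precisely its vertex set, so $\mathrm{Isom}(G,\ell)$ embeds in the automorphism group of the underlying finite combinatorial graph. This gives $\mathrm{Stab}(\tau_a)$ finite and finishes the argument.

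The only step I expect to require genuine content is the last one: the finiteness of point stabilizers and their expression in terms of $\mathrm{Isom}(G,\ell)$. This is exactly where one uses that the graphs occurring in $X_n$ carry no vertices of valence $1$ or $2$, so that the metric is rigid up to the finite combinatorics of the graph; the earlier reduction steps are formal consequences of local finiteness together with the fact that $\mathrm{Out}(F_n)$ acts by cellular maps. (One could also organize the counting around the finiteness of the set of orbits of open simplices from Proposition \ref{finite-cell}, but that is not needed here.)
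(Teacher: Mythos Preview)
Your proof is correct and follows essentially the same approach as the paper's: both deduce properness from the local finiteness of $X_n$ together with the finiteness of simplex stabilizers. The paper's argument is much terser (one sentence, asserting the stabilizer of $\Sigma_{(G,h)}$ embeds in the symmetric group $S_N$ on the vertices of $G$), while you carefully unpack the reduction from properness to finite stabilizers and then identify the point stabilizer with a quotient of $\mathrm{Isom}(G,\ell)$; your route through the isometry group is arguably cleaner than the paper's parenthetical, since for multigraphs the graph automorphism group need not embed in the symmetric group on the vertex set alone.
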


This follows from the fact that for every simplex $\Sigma_{(G,h)}$ in
$X_n$, its stabilizer in $\mathrm{Out}(F_n)$ is finite (in fact, isomorphic to
a subgroup of the symmetry group $S_N$, where $N$ is the number of vertices
of the graph $G$), and $X_n$ is a locally finite simplicial
complex.

The combination of the above results suggests the following \cite{wh} \cite{krv}.

\begin{prop}
$X_n$ is a universal space for proper actions of $\mathrm{Out}(F_n)$, i.e.,
for every finite subgroup $H\subset F_n$, the set of fixed points
$X_n^H$ of $H$ is nonempty and contractible. 
\end{prop}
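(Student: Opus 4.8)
The plan is to deduce this from the standard characterization of models for the classifying space $\E$ for proper actions (tom Dieck): a $\mathrm{Out}(F_n)$-CW-complex $X$ is a model for $\E$ if and only if the fixed-point set $X^H$ is contractible for every finite subgroup $H\subset\mathrm{Out}(F_n)$ and is empty for every infinite subgroup. (Note that the parenthetical ``$H\subset F_n$'' in the statement should read ``$H\subset\mathrm{Out}(F_n)$''; emptiness of $X^H$ for infinite $H$ is part of what ``universal space for proper actions'' means, though the paper's ``i.e.'' records only the finite part.) So the task is to verify these conditions for $X=X_n$.

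First I would record that, after passing to the barycentric subdivision, $X_n$ is genuinely a $\mathrm{Out}(F_n)$-CW-complex: the simplicial action may permute the vertices of a simplex, but this is a routine point. Next, the infinite-subgroup condition is immediate from the preceding proposition: the action on $X_n$ is proper, hence every point stabilizer is finite, so no infinite subgroup $H$ can fix a point, i.e.\ $X_n^H=\emptyset$ whenever $H$ is infinite. The case $H=1$ of the finite-subgroup condition is exactly Theorem \ref{cont}.

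For a nontrivial finite subgroup $H\subset\mathrm{Out}(F_n)$ there are two things to show: that $X_n^H\neq\emptyset$ and that $X_n^H$ is contractible. Nonemptiness is the Realization Theorem --- every finite subgroup of $\mathrm{Out}(F_n)$ is realized as a group of isometries (graph automorphisms) of some marked metric graph, equivalently fixes a point of $X_n$ --- which goes back to work of Culler, Zimmermann and Khramtsov and is also contained in \cite{krv}. Contractibility of $X_n^H$ is the substantive input: it is the main theorem of Krsti\'c--Vogtmann \cite{krv} (compare \cite{wh}), proved by identifying $X_n^H$ with an equivariant outer space and running an $H$-equivariant refinement of the deformation argument behind Theorem \ref{cont} --- $H$-equivariant blow-ups and collapses of invariant forests, organized by a poset/Morse-function argument. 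Granting these inputs, tom Dieck's criterion applies and $X_n$ is a model for $\E$.

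The main obstacle is precisely the contractibility of $X_n^H$ for $H\neq 1$: this does not follow formally from the contractibility of $X_n$ and requires the full equivariant machinery of \cite{krv}, which I would simply cite. The remaining ingredients --- the CW structure, the properness input, realization, and the passage through the characterization of $\E$ --- are formal.
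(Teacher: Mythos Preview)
Your proposal is correct and matches the paper's approach: the paper gives no proof at all, merely attributing the result to \cite{wh} and \cite{krv} in the sentence immediately preceding the proposition. Your unpacking of the argument (tom Dieck's criterion, properness for the infinite case, realization for nonemptiness, and the Krsti\'c--Vogtmann equivariant contractibility for the hard part) is accurate, cites the same sources, and correctly flags the typo $H\subset F_n$ for $H\subset\mathrm{Out}(F_n)$.
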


%It is also known that for an arithmetic subgroup $\ga\subset G$, the symmetric space
%$X=G/K$ is a universal space for proper actions of $\ga$,
%and the Teichm\"uller space $\T_{g,n}$ is a universal space for proper actions of $\mod_{g,n}$.
%This important similarity allows us to suggest
%and prove properties of  $\mathrm{Out}(F_n)$, arithmetic groups $\ga$, and $\mod_{g,n}$ 
%in a systematic way. See \cite{ji1} for a list of some properties of these groups.

%On the other hand, as pointed out earlier, differences between $X_n$, 
%symmetric spaces $X$ and $\T_{g,n}$ include that $X_n$ is not a manifold, and
%there is no known complete {\em geodesic} metric 
%on $X_n$ which is invariant under $\mathrm{Out}(F_n)$. 

\section{A natural approach to define a metric}

One natural method to construct an invariant metric on $X_n$ is to embed $X_n$ equivariantly
into a metric space $(Z, d)$ and then pull back the  metric $d$.
If $(Z, d)$ is a complete metric space and the image $i(X)$ is a complete metric subspace,
then the pulled-back metric $i^*d$ is an invariant complete metric on $X_n$.
On the other hand, even if $(Z, d)$ is a geodesic metric, $i^*d$ may not be a geodesic metric.

If $X_n$ were a manifold, and $(Z, d)$ is a Riemannian manifold,
then an immersion $i: X_n\to Z$, not necessary an embedding,
will allow us to pull back the Riemannian metric of $Z$
to obtain a Riemannian metric on $X_n$ and consequently a Riemannian
distance.

One difficulty is that there is no obvious choice of
{\em complete metric spaces} $(Z, d)$ into which $X_n$ can be {\em embedded equivariantly}.
 (We note that there is a natural embedding
$X_n\to Y$ into a topological space such that the closure is compact,
but the ambient space $Y$ is not a metric space. 
See \cite{cv2} \cite[\S 1.4]{vog} for details and references.)

Instead, we will look for a complete metric space $(Z, d)$ and an equivariant map
$i: X_n\to (Z, d)$ satisfying the condition: for any sequence of points $x_k\in X_n$
which converges to a boundary point as $k\to \infty$, 
i.e., a point in a missing simplex, then for every
basepoint $x_0\in X_n$, $d(i(x_k), i(x_0))\to +\infty$  as $k\to \infty$.
In some sense, we give up the condition of embedding and retain the completeness
condition on the image. By combining it with the incomplete simplicial metric $d_0$ on $X_n$,
we can construct complete geodesic metrics on $X_n$ which are invariant under $\mathrm{Out}(F_n)$.

%Given the above discussion on relations and analogy between the groups:
%arithmetic groups
%$\ga$, $\mathrm{Out}(F_n)$, $\mod_{g,n}$, and the spaces:  symmetric spaces, Teichm\"uller spaces,
%and $X_n$, it is natural to explore Riemannian 
%metrics of symmetric spaces $X$ and Teichm\"uller
%spaces $\T_{g,n}$. 

One such map can be constructed by borrowing ideas from Riemann surfaces, Teichm\"uller
spaces, and symmetric spaces.
Note that $\mathrm{GL}(n, \R)$ is a reductive Lie group,  $\mathrm{O}(n)$ is a maximal compact subgroup
of $\mathrm{GL}(n, \R)$, and the quotient $\mathrm{GL}(n, \R)/ \mathrm{O}(n)$ with a $\mathrm{GL}(n, \R)$-invariant metric 
is a symmetric space of nonpositive curvature, though not of noncompact type.

\begin{prop}\label{iden}
The symmetric  space $X=\mathrm{GL}(n, \R)/\mathrm{O}(n)$  can be identified
with the space of positive definite quadratic forms,
or equivalently, the space of marked compact flat tori of dimension $n$.
\end{prop}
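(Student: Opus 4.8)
The plan is to establish the two identifications separately: first $X = \mathrm{GL}(n,\R)/\mathrm{O}(n)$ with the space $\mathcal{P}_n$ of positive definite quadratic forms, and then $\mathcal{P}_n$ with the space of marked compact flat tori of dimension $n$. Both are classical, so I would only sketch the maps and the verification that they are bijective and equivariant.

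For the first identification, I would let $\mathrm{GL}(n,\R)$ act on the right on column vectors and consider the map $g \mapsto q_g$, where $q_g(v) = \|g^{-1}v\|^2 = v^{T}(gg^{T})^{-1}v$ (or, depending on conventions, $g \mapsto g^{T}g$). The point is that $q_g = q_{g'}$ if and only if $gg^{T} = g'g'^{T}$, which by uniqueness of the symmetric positive-definite square root happens exactly when $g^{-1}g' \in \mathrm{O}(n)$; hence the map descends to a bijection from $\mathrm{GL}(n,\R)/\mathrm{O}(n)$ onto $\mathcal{P}_n$. Surjectivity is the spectral theorem: every positive definite $A$ can be written $A = B B^{T}$ with $B$ invertible (e.g. $B = A^{1/2}$). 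I would note that this bijection is a diffeomorphism and is equivariant for the natural $\mathrm{GL}(n,\R)$-actions (on $\mathcal{P}_n$ by $A \mapsto gAg^{T}$), so the invariant Riemannian metric on the homogeneous space transports to the standard $\mathrm{GL}(n,\R)$-invariant metric on $\mathcal{P}_n$.

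For the second identification, to a positive definite form $A \in \mathcal{P}_n$ I would associate the flat torus $\R^n/\Z^n$ equipped with the flat metric determined by $A$ (i.e. the metric whose Gram matrix on the standard basis of the lattice $\Z^n$ is $A$), together with the marking given by the fixed identification of the lattice with $\Z^n$ — equivalently, a marked flat torus is a flat metric on the standard torus, or a lattice $\Lambda \subset \R^n$ together with a basis, modulo rotation. Two forms $A, A'$ give isometric marked tori precisely when they differ by an element of $\mathrm{O}(n)$ acting as $A \mapsto OAO^{T}$, but since both are already in the same $\mathrm{GL}$-orbit basepoint this actually forces $A = A'$; more usefully, the full moduli of flat tori (forgetting the marking) is $\mathrm{GL}(n,\Z)\backslash \mathcal{P}_n$, and the marked version is exactly $\mathcal{P}_n$ itself, compatibly with the $\mathrm{GL}(n,\R)$-action by change of basis. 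I would remark that this is the $n$-dimensional analogue of the classical fact that the space of marked elliptic curves (equivalently marked flat $2$-tori, up to scaling) is the upper half-plane $\mathrm{SL}(2,\R)/\mathrm{SO}(2)$.

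The only genuinely delicate point is bookkeeping the conventions so that the equivariance statements line up — in particular deciding whether the relevant action on quadratic forms is $A \mapsto gAg^{T}$ or $A \mapsto (g^{-1})^{T}Ag^{-1}$, and making sure the "marking" on a flat torus is defined so that the $\mathrm{GL}(n,\R)$-action on $X$ corresponds to it; everything else is a direct consequence of the spectral theorem and uniqueness of the positive definite square root. No curvature computation is needed here, since the nonpositive curvature of $X$ was already invoked in the text preceding the statement.
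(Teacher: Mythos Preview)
Your proposal is correct and follows essentially the same approach as the paper: the orbit--stabilizer identification via the action $g\cdot A = gAg^{t}$ on positive definite matrices (with the spectral theorem giving transitivity and $\mathrm{O}(n)$ as the stabilizer of the identity), followed by the correspondence between positive definite forms on $\R^n$ and flat metrics on the marked torus $\Z^n\backslash\R^n$ with the identity marking. The paper's version is terser and omits the equivariance discussion and the elliptic-curve analogy, but the mathematical content is the same.
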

\begin{proof}
It is clear that $\mathrm{GL}(n, \R)$ acts on the space of positive definite $n\times n$-matrices:
for $g\in \mathrm{GL}(n, \R)$ and a positive definite matrix $A$,
$g\cdot A=g A g^t$.
By the spectral decomposition of positive definite quadratic forms, we can prove that this action
is transitive. Since the stabilizer of the identity matrix is equal to $ \mathrm{O}(n)$, the first statement
is proved.

As in the case of marked graphs, a {\em marked flat torus} of dimension $n$
is a compact flat Riemannian torus $M^n$ with a diffeomorphism $\varphi:
M^n\to \Z^n\backslash \R^n$, where $\varphi$ is well-defined  up to homotopy.
To prove the second statement, we note that 
every positive definite quadratic form on $\R^n$ defines a flat metric on $\R^n$
which descends to a flat metric on the torus $\Z^n\backslash \R^n$.
This torus has the marking giving by the identity map $\Z^n\backslash \R^n\to 
\Z^n\backslash \R^n$. Conversely, every marked flat torus of dimension $n$ 
canonically induces a flat metric on $\Z^n\backslash \R^n$, which is in turn
induced  by a unique positive definite quadratic form on $\R^n$.  
\end{proof}

Clearly, $\mathrm{GL}(n, \Z)$ acts on the symmetric space $X$, and the quotient $\mathrm{GL}(n, \Z)\backslash X$
is the isometry classes of compact flat tori of dimension $n$.

We can define an {\em equivariant map} 
$$\Pi: X_n\to X$$
with respect to the natural actions of $\mathrm{Out}(F_n)$ and $\mathrm{GL}(n, \Z)$,
and the homomorphism $\mathrm{Out}(F_n) \to \mathrm{GL}(n, \Z)$
as follows.

For every marked metric graph $(G, \ell, h)$ in $X_n$,
there is a canonical isomorphism $h_*: H_1(G, \Z)\cong \Z^n$,
and  hence an isomorphism $h_*: H_1(G, \R)\to \R^n$ by linear extension.
Therefore $H_1(G, \Z)\backslash H_1(G, \R)$ is a marked compact torus.
We will use the metric $\ell$ on the graph to define a flat metric on the torus,
or equivalently, a positive definite quadratic form $Q$ on the real vector space
$H_1(G, \R)$.
Note that $G$ is of dimension 1
and  $ H_1(G, \R)$ is equal to the set of 1-cycles on $G$.
Therefore, it suffices to define
a quadratic form $Q$ on the set $C_1(G, \R)$ of all 1-chains on $G$ and then to
restrict it to the subspace of 1-cycles.
 
Fix an orientation for every edge $e$ of $G$.
Then every 1-chain $\sigma$ on $G$ can be written uniquely as $\sum_{e\in G} a_e e$,
where $a_e\in \R$. 
For every two edges $e, e'$ of $G$, define
\begin{equation}\label{quad-form}
Q(e, e')=\begin{cases} \ell(e), & \quad \text{\ if\ } e=e',\\
0, & \text{\ if \ } e\neq e'.
\end{cases}
\end{equation}
By extending it linearly, we obtain a quadratic form $Q$ on $C_1(G, \R)$ \cite[p. 151]{cvi}.

\begin{prop}\label{pos-definite}
The bilinear quadratic form $Q$ on  $C_1(G, \R)$,  and hence its restriction on the subspace $H_1(G,
\R)$,  is positive definite. 
\end{prop}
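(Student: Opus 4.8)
The plan is to note that $Q$ is diagonal in the edge basis, hence visibly positive definite on all of $C_1(G,\R)$, and then to invoke the elementary fact that a positive definite form stays positive definite when restricted to any linear subspace.

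First I would fix, as in the construction, an orientation for each edge, so that every $1$-chain $\sigma\in C_1(G,\R)$ is written uniquely as $\sigma=\sum_{e} a_e e$ with $a_e\in\R$. Using bilinearity and the defining formula (\ref{quad-form}),
$$Q(\sigma,\sigma)=\sum_{e}\ell(e)\,a_e^2 .$$
Since a metric graph has $\ell(e)>0$ for every edge $e$, this is a sum of nonnegative terms, so $Q(\sigma,\sigma)\ge 0$, and it equals $0$ precisely when $a_e=0$ for all $e$, i.e.\ when $\sigma=0$. Thus $Q$ is positive definite on $C_1(G,\R)$; equivalently, its Gram matrix in the edge basis is $\mathrm{diag}\big(\ell(e)\big)_e$, a diagonal matrix with strictly positive entries.

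It remains to pass to $H_1(G,\R)$. Because $G$ is one-dimensional there are no $2$-chains, so $H_1(G,\R)$ is exactly the kernel of the boundary map $\partial\colon C_1(G,\R)\to C_0(G,\R)$, in particular a linear subspace of $C_1(G,\R)$. If $v\in H_1(G,\R)$ is nonzero, then $v$ is a nonzero element of $C_1(G,\R)$, so $Q(v,v)>0$ by the previous paragraph; hence the restriction of $Q$ to $H_1(G,\R)$ is positive definite as well.

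I do not expect any genuine obstacle here: the statement reduces to the triviality that a diagonal matrix with positive diagonal entries is positive definite, combined with the inheritance of positive definiteness under restriction to a subspace. The only hypothesis that must be used is that all edge lengths are strictly positive, which is built into the definition of a metric graph (an edge of length zero is contracted, so it never appears).
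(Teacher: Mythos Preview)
Your proof is correct and follows essentially the same approach as the paper: both compute $Q(\sigma,\sigma)=\sum_e a_e^2\,\ell(e)>0$ for a nontrivial chain $\sigma=\sum_e a_e e$ and conclude positive definiteness from the strict positivity of the edge lengths. You have added a bit more explanation (the diagonal Gram matrix, the explicit remark on restriction to subspaces), but the argument is the same one-line computation.
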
 
\begin{proof}
For any  nontrivial 1-chain $\sigma$ of $G$, write $\sigma=\sum_{e\in G}
a_e e$,  where $a_e\neq 0$ for at least one edge. It follows from the definition that 
$$Q(\sum_{e \in G} a_e e , \sum_{e\in G} a_e e)=\sum_{e\in G} a_e^2 \ell(e)>0.$$
\end{proof}

In terms of the canonical identification $H_1(G, \Z)\cong \Z^n$ and $H_1(G, \R)\cong \R^n$,
the quadratic form $Q$ becomes a positive definite matrix of size $n\times n$.
It is called the {\em period matrix} of the marked metric graph $(G, \ell, h)$.
By Proposition \ref{iden}, we obtain a {\em period map}
\begin{equation}
\Pi: X_n\to X=\mathrm{GL}(n, \R)/\mathrm{O}(n), \quad (G, \ell, h) \mapsto  Q,
\end{equation}
which is clearly equivariant with respect to the homomorphism $\mathrm{Out}(F_n)\to \mathrm{GL}(n, \Z)$.

By Proposition \ref{iden} again, this positive definite quadratic form $Q$ induces a flat
metric on the marked torus $H_1(G, \Z)\backslash H_1(G, \R)$.
To be consistent with the Jacobian map of tropical curves below, we denote the
period map by 
\begin{equation}\label{period-map-1}
\Pi: X_n\to X=\mathrm{GL}(n, \R)/ \mathrm{O}(n), 
\quad (G, \ell, h) \mapsto (H_1(G, \Z)\backslash H_1(G, \R), Q),
\end{equation}
where the image consists of marked compact flat tori.

We will explain below
why it is called the period map and compare it with the period (or Jacobian) map
of Riemann surfaces. 

The definition of the quadratic form $Q$ in Equation \ref{quad-form}
might look puzzling, and may seem  wrong.
The reason is that the length of the chain $e$ is equal to $\sqrt{\ell(e)}$, which is not
a smooth function of the lengths of the metric graph $(G, \ell)$.  
From the above proof, it is clear that if we use $\ell(e)^2$ instead of $\ell(e)$
and define
$\tilde{Q}(e, e)=\ell(e)^2,$ then we still obtain a positive definite quadratic form on
the torus $H_1(G, \Z)\backslash H_1(G, \R)$.
This latter definition might look more natural, since the length of a chain $e$
would be equal to $\ell(e)$, which 
%geodesic segment  in the flat torus $(H_1(G, \Z)\backslash H_1(G, \R), \tilde{Q})$
will depend smoothly on the metric $\ell$ of the metric graph $(G, \ell)$. 
%but this is not the case with the earlier definition of $Q$ in Equation \ref{quad-form}. 

%\begin{rem}
%{\em It is clear from the definition that for any cycle $\sigma$ of $(G, \ell)$,
%$\langle \sigma, \sigma\rangle=\ell(\sigma)$, where a cycle is a subgraph of $G$ homeomorphic to a circle,
%and $\ell(\sigma)$ is the sum of lengths of the edges in $\sigma$.
%On the other hand, the length of $\sigma$ with respect to the metric on $H_1(G, \Z)$
%defined by $Q$ is equal to $\sqrt{\ell(\sigma)}$ and is not a smooth function on $\ell$.
%See \cite[\S 3.1]{ba} for more discussion on this. }
%\end{rem}

\begin{rem}\label{many-def}
{\em The proof of Proposition \ref{pos-definite} shows that for every function $f(x)$ satisfying
$f(x)>0$ for $x>0$,
we can obtain a positive quadratic form $Q_f$ on $H_1(G, \R)$ by defining
$Q_f(e, e)=f(\ell(e))$ as in Equation \ref{quad-form}
 and hence an equivariant map $\Pi_f: X_n\to \mathrm{GL}(n, \R)/\mathrm{O}(n)$. 

To explain that the function $f(x)=x$ is the right choice, we need to go through tropical geometry.
The definition of the quadratic form $Q$ in Equation \ref{quad-form}
 turns out to be the right definition when $(H_1(G, \Z)\backslash H_1(G, \R), Q)$ is viewed
as  a {\em principally polarized tropical abelian variety}  of a  marked 
{\em tropical curve}, when the  metric graph $(G, \ell)$ is identified with a tropical
curve.  
Another potentially important point is that $Q$ is a {\em linear function} when it is restricted to
each simplex of the outer space $X_n$ and hence it has the characteristic property
of tropical maps in tropical geometry, which might imply that {\em the tropical Jacobian map
is a tropical map} \cite[Assumption 3, p. 165]{cvi}. 
To explain these,
we will describe some basic notions in tropical geometry in the next sections.

Another reason for discussing tropical geometry is that discussion in this paper also
clarifies some points on tropical curves and the moduli space of principally polarized
tropical abelian varieties (Corollary \ref{moduli-abelian}).
 It seems reasonable to believe that the geometry
of the outer space  $X_n$ will be potentially useful in studying tropical curves and their moduli spaces.
 See 
Remark \ref{trop-moduli} below.
}
\end{rem}

To define a metric on $X_n$,
there is a further problem with using the period map $\Pi$ in Equation \ref{period-map-1}. 
When $n\geq 3$, 
the homomorphism $\mathrm{Out}(F_n)\to \mathrm{GL}(n, \Z)$ has an infinite kernel.
Since 
the map $\Pi$ is equivariant with respect to the homomorphism $\mathrm{Out}(F_n)\to \mathrm{GL}(n, \Z)$,
it  cannot be injective. 
But as we will show below (Corollary \ref{failure}), $\Pi$ is not even injective locally
 when $n\geq 3$, i.e., the induced quotient map $\Pi: \mathrm{Out}(F_n)\backslash X_n\to 
 \mathrm{GL}(n, \Z)\backslash \mathrm{GL}(n, \R)/\mathrm{O}(n)$ is not injective,  unlike 
 the period map of Riemann surfaces in Equation \ref{torelli}.
When it is restricted to each simplex in $X_n$, the period map $\Pi$ is a differential map, but it is
not injective or is an immersion in general. 
Though $\mathrm{GL}(n, \R)/\mathrm{O}(n)$ is a symmetric space and
has an invariant metric, we cannot use this map to pull back and define
a metric on $X_n$. 

To overcome this difficulty, one simple and crucial 
idea is to make use of  the simplicial metric $d_0$ on $X_n$ 
in Proposition \ref{simplicial-metric} as well and combine it with the pulled back semimetric by the
period map. Several metrics are constructed in this way in \S 7 and \S 8.
In \S 9, we also view $d_0$ as an analogue
of the Weil-Petersson metric of the Teichm\"uller space
and follow the  construction of  the McMullen metric
on Teichm\"uller spaces \cite{mc} in order to construct another invariant
complete geodesic metric on $X_n$. 

This brief discussion explains the fruitful analogy between the three
kind of spaces: symmetric spaces $X$, the Teichm\"uller space $\T_{g,n}$, 
and the outer space $X_n$.

\section{Tropical curves}

Metric graphs are closely related to tropical curves.
We will define tropical curves and show that 
there is a 1-1 correspondence between
the set of compact metric graphs without vertex of valence 1 or 2
and the set of compact {\em smooth} tropical curves.

This relation with tropical curves allows us to explain
why the definition of the positive
definite quadratic form $Q$  in Equation \ref{quad-form}
for a metric graph is natural.
We also point out in Remark \ref{trop-moduli} that outer spaces can be
used to study the moduli space $\M_n^{trop}$ of tropical curves. 

To make this paper more self-contained and easier for the reader,
%Since the tropical geometry and tropical curves may not be familiar to people
%who are studying geometric group theory, in particular the group $\mathrm{Out}(F_n)$, 
we have tried to explain some results in tropical geometry starting from the basics. 
One reason that tropical curves are confusing to some people, in particular
to the author of this paper, is that one often uses tropical plane curves as examples of
tropical curves. But these curves are {\em singular in general} and do not correspond  bijectively
to metric graphs,
and only {\em smooth} tropical curves correspond to metric graphs.
Definitions of abstract smooth tropical curves (and manifolds) basically
follow the usual definition of manifolds in the sense that transition functions between
local charts
are given by $\Z$-affine transformations, but there are several
different ways to describe local models and they can be
complicated to people who approach tropical curves for the first time.
For example, the local model in \cite[\S 3]{mikz} is not geometric. 
In \cite[Definition 1.15]{ims}, a tropical curve is defined as a connected metric graph.
We will follow the approach in \cite{mik3} together with some ideas and statements in \cite{mik1} \cite{mik2}. There is another approach in \cite{rst}. 

\subsection{The tropical semifield and tropical polynomials}

Briefly, the tropical geometry is algebraic geometry over the tropical semifield, and
the tropical semifield is $\bf T=\R \cup \{-\infty\}$ with suitable operations.
 
It has an addition $``x+y"=\max\{x, y\}$, and $-\infty$ is the additive zero.
It has a multiplication $``x\cdot y"=x+y$, and 
$0$ is the multiplicative identity. Note that $-\infty$ is also
the multiplicative $0$, since $``x\cdot (-\infty)"=-\infty$.

But it does not have a subtraction, since we cannot
recover $x$ from $``x+y"=\max\{x, y\}$ and $y$ when $x< y$.

The division $``x/y"=x-y$ is defined when $y\neq -\infty$.
This corresponds to the usual rule that the division is defined when the denominator
is not zero.

 Once we have addition and multiplication, we can define polynomials.
 For example, for a polynomial in two variables $p(x, y)=``ax^2+bxy+cy^2"$,
 when it is restricted to $\R^2\subset \bf T^2$, it is the maximum of linear functions
 $$p(x, y)=\max\{2x+a, x+y+b, 2y+c\}.$$

 \subsection{Tropical plane curves}
 
 It is well-known that for any polynomial $p(x, y)$, the equation
 $p(x, y)=0$ defines a plane curve in $\R^2$ (or rather in $\C^2$).
 Similarly, tropical polynomials  define tropical curves in $\bf T^2$.
 We will concentrate on their points inside $\R^2$.
 
 Given a tropical polynomial $p(x, y)$, a direct generalization is to
 define the associated tropical curve by $p(x, y)=-\infty$. 
 (Recall that $-\infty$ is the tropical additive 0).  
 But this has a serious problem of having too few points.
 For example, for a degree 2 polynomial $p(x, y)$ above, $p(x, y)=-\infty$
 is equivalent to $x=-\infty, y=-\infty$.

 For a usual polynomial $p(x, y)$ over $\C$, the plane curve $p(x, y)=0$ 
 in $\C^2$ can also be defined to consist of points $(x, y)$
 where the function $1/p(x,y)$ is not regular.
 
 For a tropical polynomial, we can similarly define {\em its tropical plane curve}
 in $\R^2$
 to consist of points where the quotient $``0/p(x, y)"=-p(x, y)$ is not regular.
 (Recall that 0 is the tropical multiplicative identity.) See \cite[pp. 10-11]{ims} for more details.
 
 This means that points of the tropical plane curve of $p(x, y)$
 correspond to points where $p(x, y)$ is achieved by at least two linear
 functions.  (Note that near a point where two linear functions are equal,
 $-p(x,y)$ is not equal to another tropical polynomial, since $-\max\{f, g\}=\inf\{-f, -g\}$).

 The following result is clear from the definition.
 
 \begin{prop}
 For any tropical polynomial $p(x, y)$,  its tropical plane  curve in $\R^2$ 
 is a piecewise linear curve with rational slopes.
\end{prop}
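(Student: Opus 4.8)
The plan is to unwind the definition of the tropical plane curve of $p(x,y)$ and show directly that the resulting set is a finite union of line segments and rays with rational slopes. First I would write $p(x,y)$ explicitly as a maximum of finitely many affine functions: if $p(x,y)=``\sum_{(i,j)\in S} c_{ij} x^i y^j"$ for a finite index set $S\subset \Z_{\geq 0}^2$ and coefficients $c_{ij}\in \mathbf T$, then on $\R^2$ we have $p(x,y)=\max_{(i,j)\in S}\{ix+jy+c_{ij}\}$, a finite maximum of affine functions whose linear parts $(i,j)$ have integer (hence rational) coordinates. The tropical plane curve is, by the definition recalled in the excerpt, the set of points $(x,y)\in\R^2$ at which the maximum is attained by at least two distinct affine functions in the list, i.e. the non-smooth locus of the convex piecewise-linear function $p$.

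Next I would argue the geometric description. For each pair $\{(i,j),(k,l)\}$ of distinct indices in $S$, the locus where $ix+jy+c_{ij}=kx+ly+c_{kl}$ is either empty, all of $\R^2$, or a single line $L$ with normal direction $(i-k,j-l)\in\Z^2$, hence of rational slope. The tropical curve is contained in the finite union of these lines $L$. To see that it is a \emph{piecewise} linear curve (rather than some lower-dimensional set meeting a line in, say, a subsegment with irrational endpoints), I would note that on each such line $L$ the function $p$ restricts to a one-variable convex piecewise-linear function with finitely many breakpoints, and the subset of $L$ lying in the tropical curve is the union of those closed subsegments of $L$ on which the two chosen affine functions dominate all the others — each such condition is a finite intersection of half-lines, so it is a closed interval (possibly a ray, a point, or empty) with endpoints at intersection points of the rational lines $L$, which themselves have rational coordinates. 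Assembling over all finitely many pairs gives that the tropical plane curve is a finite union of segments and rays of rational slope, which is the claimed statement.

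I do not expect a serious obstacle here: the result is essentially a restatement of the definition plus the elementary fact that the non-differentiability locus of a finite maximum of affine-linear functions with integer linear parts is a rational polyhedral complex of codimension one. The one point requiring a little care — and the closest thing to a "hard part" — is making precise what "piecewise linear curve with rational slopes" means and checking that the set is genuinely one-dimensional and locally a finite union of segments rather than, e.g., containing isolated points or two-dimensional chunks; this is handled by observing that if two affine functions agree on an open set then, after discarding redundant monomials, we may assume they do not, so each pairwise-agreement locus is an honest line. A cleaner phrasing, which I would adopt if brevity is wanted, is simply: the tropical plane curve is the corner locus (the non-linearity locus) of the tropical polynomial $p$, viewed as a convex PL function on $\R^2$, and such a corner locus is automatically a rational polyhedral complex of pure dimension one by standard convexity, so in particular a piecewise linear curve with rational slopes.
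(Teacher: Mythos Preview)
Your argument is correct and follows the same line as the paper, which simply declares the result ``clear from the definition'' without further elaboration. Your unwinding of $p(x,y)$ as a finite maximum of affine functions with integer linear parts, and the identification of the tropical curve with the corner locus of this convex piecewise-linear function, is exactly the intended reasoning.

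One small correction: you assert that the endpoints of the segments have rational coordinates because they are intersection points of the rational lines $L$. This is not right. The lines have rational \emph{slopes} (their normal directions $(i-k,j-l)$ are integral), but the affine constants $c_{ij}$ are arbitrary real numbers, so the lines themselves need not pass through any rational point and their pairwise intersections need not be rational. The paper explicitly remarks on this immediately after the proposition: ``Though the slope of each line segment is rational, its endpoints are not necessarily rational.'' This slip does not affect the proposition as stated, since only rationality of slopes is claimed, but you should remove the parenthetical about rational endpoints.
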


Though the slope of each line segment is rational,  its endpoints are not
necessarily rational. For some pictures of plane tropical curves, see \cite{mik1} \cite{mik2}. 
Similarly, a tropical polynomial $p(x, y, z)$ defines a tropical hyperplane
in $\mathbf T^3$, and two tropical polynomials $p_1(x, y, z), p_2(x, y, z)$ define 
a tropical subvariety  in $\mathbf T^3$ as the intersection of the tropical hyperplanes.

Tropical plane curves are not smooth tropical curves whenever it contains a vertex of valence
strictly greater than 3.
In order to relate metric graphs to tropical curves, we need to introduce {\em smooth} tropical
curves which are not necessarily embedded in some ambient tropical space $\mathbf T^n$.
In fact, compact tropical curves cannot be embedded in $\mathbf T^n$ (which can be seen
by using the balancing condition at vertices),
and tropical curves in $\mathbf T^n$ are affine tropical curves. 

\subsection{Abstract smooth tropical curves}

In this subsection, we introduce the notion of an abstract smooth tropical curve.
The basic reference is \cite{mik3}, and the papers \cite{ims} \cite{ss} are also helpful. 

To distinguish tropical curves from graphs, we use $\ga$ to denote tropical curves.
{\em A smooth tropical curve} is a graph $\ga$ endowed with a {\em 
smooth $\Z$-affine structure},  which consists of an open covering by compatible
$\Z$-affine smooth charts:
\begin{enumerate}
\item For each interior point $x$ of an edge $e$, let $U$ be a neighborhood of $x$
homeomorphic to an interval. A $\Z$-affine structure on $U$ is an embedding
$$\phi_U: U\to \R.$$

\item If $x$ is a vertex of valence $n+1$, let $e_0, \cdots, e_n$ be the edges
connected to $x$. Let $U$ be a neighborhood of $x$ which is contained in the union
of these edges and which consists of $n+1$ intervals. 
A {\em smooth $\Z$-affine structure} on $U$ is an embedding 
$$\phi_U: U\to \R^n$$
such that for every edge $e_i$, $\phi_U(U\cap e_i)$ is a line segment
with a rational slope, and $\phi_U$ satisfies a {\em balancing and nondegeneracy condition}
at the vertex $x$.
Specifically, let $v_i\in \Z^n$ be the {\em primitive vector} in the direction of $\phi_U(U\cap e_i)$
(pointing away from $x$).
Then the {\em balancing condition} at $x$ is the equation
\begin{equation}\label{balance-0}
\sum_{i=0}^{n}v_i=0;
\end{equation}
and the {\em nondegeneracy condition}
 is that any $n$ vectors of the $n+1$ vectors $v_0, \cdots, v_n$ form a basis of $\Z^n$,
  which is  equivalent to the condition that $v_0, \cdots, v_{n-1}$ form a basis of $\Z^n$
when the balancing condition is satisfied.
(One reason for imposing this non-degeneracy condition is that every two bases of $\Z^n$
can be interchanged by some element of $\mathrm{GL}(n, \Z)$).

\item The above local charts are compatible as follows.
Given two overlapping charts $\phi_1: U_1\to \R^{n_1},$ $\phi_2: U_2\to \R^{n_2}$,
we can choose a common $\R^N$ and inclusions of $\R^{n_1}, \R^{n_2} \subset \R^N$ as
linear subspaces 
$$\phi_1: U_1 \to \R^{n_1}\subset \R^N, \phi_2: U_2\to \R^{n_2}\subset \R^N,$$
and find a $\Z$-affine linear map $\Phi_{12}: \R^{N}\to \R^{N}$
such that

$$\phi_1|_{U_1\cap U_2}=\Phi_{12} \circ \phi_2.$$
(Recall that a {\em $\Z$-affine linear map} is given by
$v\in \R^{N}\mapsto Av+b$, where $A$ is an invertible integral $N\times N$-matrix, 
i.e., $A\in \mathrm{GL}(n, \Z)$, $b\in \R^{N}$.)
%(By reversing the order of $U_1, U_2$, we can show that in this case $A$ is invertible
%over $\Z$, i.e., $A\in \mathrm{GL}(n, \Z)$). 
Note that the overlap $U_1\cap U_2$ can be complicated if $U_1, U_2$ are large.
If they are sufficiently small, then $U_1\cap U_2$ consists of an open interval since
$\ga$ is a graph. 
\end{enumerate} 

The relation between tropical plane curves and smooth tropical curves is explained
in the next proposition.

\begin{prop}\label{normalization}
Given a tropical plane curve $C$, there is a canonical smooth tropical curve $J$
which can be  mapped onto the tropical plane curve $C$ by a locally $\Z$-affine map. 
The smooth tropical curve $J$ is called the normalization of the tropical plane curve $C$.
\end{prop}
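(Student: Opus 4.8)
\emph{Proof proposal.} The plan is to produce $J$ by re-charting $C$ rather than by changing its underlying set: the underlying graph of $J$ will be that of $C$ after prescribed local modifications at its ``non-smooth'' points, the smooth $\Z$-affine atlas on $J$ will come from new charts into possibly higher-dimensional $\R^n$, and the normalization map $\nu\colon J\to C$ will be, on each chart, the composition of that chart with an integral-linear map into the ambient $\R^2$. First I would locate the points where $C$ fails to be smooth in the sense of the definition of an abstract smooth tropical curve above. On the interior of an edge there is nothing to check, since a segment of rational slope together with the identity is already a legitimate chart into $\R$. At a vertex $x$, recall the standard fact that a tropical plane curve carries a positive integer weight on each edge making it balanced at every vertex: if $e_1,\dots,e_k$ are the incident edges, with outgoing primitive direction vectors $w_1,\dots,w_k\in\Z^2$ and weights $m_1,\dots,m_k$ (read off from the dual Newton subdivision of the defining polynomial), then $\sum_{i=1}^k m_i w_i=0$. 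Such a vertex is already smooth exactly when $k=3$, all $m_i=1$, and any two of the $w_i$ form a $\Z$-basis; the obstructions are therefore a weight $m_i>1$, a valence $k\ge 4$, or a trivalent vertex whose primitive directions span a proper sublattice of $\Z^2$.

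Next I would give the local model that repairs each obstruction. For a weight $m_i>1$ I would keep $e_i$ as a single edge of $J$ and let $\nu$ restrict on it to a $\Z$-affine map onto $e_i$ whose linear part is the non-primitive vector $m_i w_i$; on the interior of $e_i$ this is still just a chart into $\R$, so no new structure is needed there. For a vertex $x$, having reduced to the case that all incident weights equal $1$, with $k$ incident edges carrying outgoing primitive vectors $u_1,\dots,u_k$ and $\sum_j u_j=0$, I would partition the incident edges into blocks dictated by the local structure of the polynomial at $x$ (equivalently, by how the dual cell of the Newton subdivision decomposes), so that the $u_j$ in each block are themselves balanced; each block becomes a separate branch of $J$ through a separate preimage of $x$, which is the branch-separating part of the normalization. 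On a block with edges $e_{j_0},\dots,e_{j_r}$ I would take a neighbourhood $U$ consisting of $r+1$ half-open intervals and the chart $\phi_U\colon U\to\R^{r}$ sending $e_{j_s}\cap U$ onto a segment along the standard basis vector $\epsilon_s$ for $s<r$ and $e_{j_r}\cap U$ onto a segment along $-(\epsilon_0+\cdots+\epsilon_{r-1})$; these $r+1$ vectors sum to zero and any $r$ of them form a $\Z$-basis of $\Z^r$, so $\phi_U$ is a legitimate smooth $\Z$-affine chart. I would then define $\nu$ near this preimage of $x$ as $\phi_U$ followed by the integral-linear map $\R^{r}\to\R^2$ with $\epsilon_s\mapsto u_{j_s}$, translated so that the vertex goes to $x$; by balancedness of the block this sends $-(\epsilon_0+\cdots+\epsilon_{r-1})\mapsto u_{j_r}$, and hence carries the corresponding star onto the matching part of the star of $x$ in $C$.

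It then remains to glue the charts and to check functoriality. The overlaps of the charts above are open subintervals of edges, on each of which the transition between a chart into $\R^{n_1}$ and a chart into $\R^{n_2}$ identifies one primitive lattice segment with another; embedding $\R^{n_1}$ and $\R^{n_2}$ as coordinate subspaces of a common $\R^M$ and completing a primitive vector to a $\Z$-basis yields the required transition map in $\mathrm{GL}(M,\Z)$, and assembling these data gives the smooth tropical curve $J$ together with a locally $\Z$-affine surjection $\nu\colon J\to C$. For the word ``canonical'' I would verify a universal property: the underlying graph of $J$, the dimensions of its charts, and the map $\nu$ are all forced by the requirements that $J$ be smooth and $\nu$ be locally $\Z$-affine with image $C$, so any other such pair $(J',\nu')$ factors uniquely through $(J,\nu)$.

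The step I expect to be the main obstacle is not the one-vertex construction, which reduces to the elementary linear algebra above, but specifying correctly, at each vertex, the partition of the incident edges: deciding exactly where $\nu$ must separate branches and where a genuine higher-valence smooth vertex of $J$ should instead be retained. This is precisely what distinguishes a true normalization from a mere re-charting of $C$ (a $4$-valent vertex coming from a quadrilateral cell of the Newton subdivision stays one vertex of $J$, whereas one coming from two edges crossing transversally must be pulled apart into two branches), it is governed by the polynomial and not by the set $C$ alone, and checking that the resulting partition is canonical and compatible with the balancing data at both endpoints of every edge is where the real care is needed.
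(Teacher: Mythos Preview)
Your construction diverges from the paper's at the crucial step, and the divergence is exactly the ``main obstacle'' you flag. The paper never partitions the star of a vertex and never separates branches: its normalization $J$ has the \emph{same underlying topological space} as $C$, and the map $J\to C$ is the identity on points. At a vertex $x$ of valence $n+1$ with outgoing primitive vectors $V(e_0),\dots,V(e_n)$ and weights $W(e_0),\dots,W(e_n)$, the paper simply takes the standard smooth model $\Gamma_0\subset\R^n$ with edge directions $v_0=(1,\dots,1),\,v_1=(-1,0,\dots,0),\dots,v_n=(0,\dots,0,-1)$ and defines the unique $\Z$-affine map $\pi:\R^n\to\R^2$ with $\pi(0)=x$ and $\pi(v_i)=W(e_i)V(e_i)$; the weighted balancing condition $\sum W(e_i)V(e_i)=0$ guarantees this is consistent, and the inverse of $\pi|_{U_0}$ is the chart. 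Compatibility of overlapping charts then reduces, as you say, to matching primitive vectors on edge intervals.

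So your anticipated difficulty---deciding which $4$-valent vertices to pull apart and which to keep---is a problem you have created for yourself by aiming at a finer object than the paper does. Your version is closer in spirit to a classical normalization (resolving nodes, using the Newton subdivision to detect local reducibility), and it produces a curve that depends on the defining polynomial rather than on the set $C$ alone. That is a legitimate and in some ways more refined construction, but it is not what the paper means by ``normalization'' here: the paper's $J$ is canonical in the trivial sense that it is $C$ itself equipped with a new atlas, so no partitioning decision and no universal-property argument is needed. If you drop the branch-separation step, your chart $\phi_U$ into $\R^r$ with $r=k-1$ and your map $\epsilon_s\mapsto u_{j_s}$ become exactly the paper's construction (up to a change of $\Z$-basis of $\R^{k-1}$), and the proof is complete.
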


Suppose a tropical curve $C\subset \R^2$ is defined by a tropical polynomial $p(x, y)$. 
Then $C$ is a piecewise-linear curve in the plane such that each edge has a rational slope.
When $C$ contains a vertex $x_0$ of valence strictly greater than 3, 
the primitive integral vectors along all edges of the vertex $x_0$ do not satisfy the
nondegeneracy condition.

Now we explain how to put a smooth $\Z$-affine structure on $C$. 
Suppose $x_0\in C$ is a vertex of valence $n+1$. Let $e_0, \cdots, e_n$ be all the
edges from $x_0$.
Suppose an edge $e$ is defined by the equality of two linear functions
$``a_1 x^{j_1} y^{k_1}"=``a_2 x^{j_2} y^{k_2}"$, i.e.,
$j_1 x+ k_1 y+ a_1= j_2 x+ k_2 y+ a_2$.
Define $V(e)$ to be the {\em primitive integral vector} in the direction of the edge $e$
(pointing away from the vertex),
and the {\em weight} $W(e)$ of the edge to be the greatest common denominator 
$GCD(j_1-j_2, k_1-k_2)$.

\begin{lem}[{\cite[p. 512]{mik1}}]\label{balance}
Under the above notation, for every vertex $x_0$ of a plane tropical curve 
$C$, the following balancing condition is satisfied:
\begin{equation}\label{balance-weight}
\sum_{i=0}^n W(e_i) V(e_i)=0.
\end{equation}
\end{lem}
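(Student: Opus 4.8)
The plan is to reduce the statement to the combinatorial geometry of the Newton subdivision dual to the tropical curve $C$. Recall that a tropical polynomial $p(x,y) = ``\sum_{(j,k)} a_{jk} x^j y^k" = \max_{(j,k)}\{jx + ky + a_{jk}\}$ has an associated Newton polytope $\Delta \subset \R^2$ (the convex hull of the exponents $(j,k)$ with $a_{jk} \neq -\infty$), and the regions of linearity of $p$ on $\R^2$ are in order-reversing bijection with the vertices of a regular subdivision of $\Delta$ induced by the weights $a_{jk}$. Under this duality: (i) a $2$-dimensional region of $\R^2$ where $p$ equals the single linear piece $jx+ky+a_{jk}$ corresponds to a vertex $(j,k)$ of the subdivision; (ii) an edge $e$ of $C$ separating the regions of $``a_1 x^{j_1}y^{k_1}"$ and $``a_2 x^{j_2}y^{k_2}"$ is dual to the edge of the subdivision joining $(j_1,k_1)$ and $(j_2,k_2)$, and this dual edge is perpendicular to $e$; (iii) a vertex $x_0$ of $C$ of valence $n+1$ is dual to a $2$-dimensional cell $P$ of the subdivision, an $(n+1)$-gon whose boundary edges are exactly the duals of $e_0,\dots,e_n$.

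First I would make the duality quantitative. For the edge $e_i$ with defining data $(j_1,k_1,a_1)$ and $(j_2,k_2,a_2)$, the difference vector $(j_1-j_2,\,k_1-k_2)$ is an integer vector with $\gcd = W(e_i)$ by definition; hence the dual segment (a boundary edge of $P$) is $W(e_i)$ times a primitive integer vector, and that primitive vector is exactly $V(e_i)$ rotated by $90^\circ$ (a fixed rotation $R$, since $e_i \perp$ its dual and both directions are rational). Thus, traversing the boundary of the polygon $P$ once, the successive edge vectors are precisely $\pm W(e_i)\, R V(e_i)$ for $i=0,\dots,n$, with consistent orientation (all pointing counterclockwise around $P$, the sign being forced by the orientation convention that $V(e_i)$ points away from $x_0$).

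The key step is then the elementary fact that the edge vectors of any closed convex polygon, traversed once around the boundary, sum to zero: $\sum_{i=0}^n \pm W(e_i) R V(e_i) = 0$. Since $R$ is an invertible linear map, applying $R^{-1}$ gives $\sum_{i=0}^n W(e_i) V(e_i) = 0$, which is \eqref{balance-weight}. I would close by checking that the orientation/sign bookkeeping is internally consistent: the cyclic order of the edges $e_0,\dots,e_n$ around the vertex $x_0$ of $C$ matches (in reverse, because of the $90^\circ$ rotation and the duality) the cyclic order of the dual edges around $P$, so all the $\pm$ signs are in fact $+$.

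The main obstacle I anticipate is not the polygon-edge-sum identity, which is trivial, but setting up the duality with the correct normalizations — in particular verifying claim (ii) that the dual segment has lattice length exactly $W(e_i) = \gcd(j_1-j_2,k_1-k_2)$ and is orthogonal to $e_i$, and pinning down the orientation so that no spurious sign appears. An alternative, more self-contained route that avoids invoking the full Newton-subdivision machinery would be a direct local computation at $x_0$: near $x_0$, $p$ is the max of the $n+1$ affine functions $L_i(x,y) = j_i x + k_i y + a_i$ (indexing so $e_i$ borders the regions where $L_{i-1}$ and $L_i$ dominate, cyclically); the direction of $e_i$ is the kernel of $L_{i-1}-L_i$, so $V(e_i)$ is proportional to $R\,(\nabla L_{i-1}-\nabla L_i)$ with proportionality constant $1/W(e_i)$, whence $W(e_i)V(e_i) = \pm R(\nabla L_{i-1}-\nabla L_i)$ and the sum telescopes around the cycle to $0$. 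I would present whichever of these is cleaner once the sign conventions are fixed, and cite \cite[p.\ 512]{mik1} for the standard form of the argument.
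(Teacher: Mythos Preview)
Your proposal is correct, and in fact your \emph{alternative} route at the end is exactly what the paper does: it writes $W(e)V(e)$ explicitly as $(k_2-k_1,\,j_1-j_2)$ when $e$ separates the region of $f_1=j_1x+k_1y+a_1$ from that of $f_2=j_2x+k_2y+a_2$ (traversed counter-clockwise), observes that this is the rotated gradient difference, and then notes that each linear piece $f_i$ contributes its coefficients once with a plus and once with a minus sign to the two edges bounding its cone, so the sum over all edges cancels---precisely your telescoping argument $\sum R(\nabla L_{i-1}-\nabla L_i)=0$.

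Your primary approach via the dual Newton subdivision is a genuinely different packaging: rather than computing locally and telescoping, you invoke the global duality (vertex $\leftrightarrow$ polygon $P$, edge $\leftrightarrow$ dual edge of lattice length $W(e_i)$) and then use the trivial fact that the boundary edge-vectors of a closed polygon sum to zero. This is the standard ``conceptual'' proof and has the advantage of making the balancing condition geometrically obvious once the duality is set up; the paper's direct computation is more self-contained but hides the polygon picture. Both are short; the only real work in either version is the sign/orientation bookkeeping you flagged, and the paper handles that by tracking the sign change of $f_1-f_2$ across each edge rather than by appealing to the cyclic order of $\partial P$.
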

\begin{proof}
Suppose an edge $e$ out of $x_0$ is defined by $f_1(x, y)=f_2(x, y)$,
where $f_1(x, y)=j_1 x+ k_1 y+ a_1$, and $f_2(x, y)= j_2 x+ k_2 y+ a_2$.
Assume that when we move counter-clockwise around the vertex, 
$f_1(x, y)-f_2(x, y)$
changes from positive to negative.
 Then it can be checked that $W(e)V(e)=(-(k_1-k_2), j_1-j_2)=
(k_2-k_1, j_1-j_2)$. On the other hand, if $f_1(x, y)-f_2(x, y)$
changes from negative  to positive, then $W(e)V(e)=(k_1-k_2, j_2-j_1)$. (Think of the
simplest example of two linear functions $x, y$ with the vertex at the origin.)
For any vertex $x$ of $C$, there are linear functions $f_1(x, y), \cdots, f_k(x, y)$
such that all edges are defined by the equality of two linear functions
which achieve the maximum value. 
These edges divide a small disk with center at $x$ into cone pieces, and 
for each cone $\Omega$, one linear function $f_i$ is the unique function which takes the maximum value on
 this cone. When we move counter-clockwise across the two edges $e_1, e_2$ of $\Omega$ 
 defined by $f_i=f_{i_1}$ and 
 $f_i=f_{i_2}$ for some $f_{i_1}$ and $f_{i_2}$,  
 both possibilities of sign changes of the values $f_i-f_{i_1}$ and 
 $f_i-f_{i_2}$ can occur. This means the coefficient of $x$
 (and similarly the coefficient of $y$) of $f_i$ will appear in $W(e_1)V(e_1)$
 and  $W(e_2)V(e_2)$ once as positive and another time as negative. By summing over all
 edges (or rather all cones), they all cancel out and the sum
  $\sum_{i=0}^n W(e_i)V(e_i)$ is zero. (Think of the simplest example 
 of tropical curve defined by the tropical polynomial $``x+y+0"$, which has only one
 vertex at the origin and 3 edges coming out of it.)
\end{proof} 

\begin{cor}
For every tropical polynomial $p(x, y)$, its associated tropical plane curve $C$
is a piecewise linear curve in $\R^2$ with rational slopes such that with respect to
a suitable weight $W(e)$ for each edge $e$, it satisfies the balancing condition as
in Equation \ref{balance-weight}
at every vertex.
\end{cor}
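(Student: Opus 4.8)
The plan is to treat this corollary as bookkeeping on top of the two facts already in hand for a tropical plane curve $C\subset\R^2$ defined by a tropical polynomial $p(x,y)$: the preceding Proposition, which says $C$ is a piecewise-linear curve all of whose edges have rational slope, and Lemma \ref{balance}, which supplies the identity $\sum_{i=0}^n W(e_i)V(e_i)=0$ at any vertex $x_0$ with adjacent edges $e_0,\dots,e_n$. Thus the only thing genuinely left to do is to exhibit one coherent assignment $e\mapsto W(e)$ of a weight to each edge of $C$, and to note that ``balancing at every vertex'' refers to the finitely many honest vertices of $C$, an unbounded edge having a free end at which no condition is imposed.

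First I would recall the definition of the weight: each edge $e$ of $C$ lies on the locus where the maximum defining $p$ is simultaneously attained by two monomials $``a_1x^{j_1}y^{k_1}"$ and $``a_2x^{j_2}y^{k_2}"$; set $W(e)=\mathrm{GCD}(j_1-j_2,\,k_1-k_2)$, and let $V(e)$ be the primitive integral vector along $e$ pointing away from the vertex under consideration. The one point requiring care is that this quantity does not depend on the choice of the competing pair when more than two monomials are maximal along $e$. I would argue this as follows: if the monomials attaining the maximum along $e$ have exponent vectors $m_1,\dots,m_s\in\Z^2$, then every difference $m_a-m_b$ is an integral normal vector to the line spanned by $e$, so these differences all lie on a single rational line through the origin; the weight is then the lattice length of the edge dual to $e$ in the regular subdivision of the Newton polygon of $p$ induced by the coefficients, which is intrinsic and independent of the pair chosen. (If one prefers, one may simply import this well-definedness from \cite{mik1} as part of the definition of a plane tropical curve.)

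Then I would assemble the statement: with $W(\cdot)$ defined on all edges as above, Lemma \ref{balance} applies verbatim at each vertex $x_0$ of valence $n+1$ with incident edges $e_0,\dots,e_n$, yielding $\sum_{i=0}^n W(e_i)V(e_i)=0$, which is Equation \ref{balance-weight}; combined with the preceding Proposition's piecewise-linearity and rationality of slopes, this is exactly the assertion of the corollary.

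I do not expect a real obstacle here: the corollary is an immediate packaging of Lemma \ref{balance}. The only mildly delicate point — and the one I would be most careful to spell out — is the well-definedness of the weight function at edges where three or more monomials of $p$ are simultaneously maximal, together with the warning that the balancing condition is asserted only at the actual vertices and not at the ends of unbounded rays.
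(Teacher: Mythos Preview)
Your proposal is correct and matches the paper's treatment: the paper states this corollary without proof, as it follows immediately from the preceding Proposition (piecewise linearity with rational slopes) together with Lemma~\ref{balance} (the balancing identity at each vertex). Your additional care about the well-definedness of $W(e)$ when more than two monomials tie along an edge is a reasonable point to raise, though the paper does not address it explicitly.
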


\begin{rem}
{\em
The converse of the above corollary is also true \cite[p. 512]{mik1}. 
Let $C$ be a piecewise linear curve in $\R^2$ with rational slopes such that with respect to
a suitable integral weight for each edge it satisfies the balancing condition in Equation \ref{balance-weight}
at every vertex.
Then there exists a tropical polynomial whose associated tropical curve is $C$,
and the weight of every edge is defined as above. Briefly, this can be proved as in 
Lemma \ref{balance} by noting that if a finite sequence of numbers $a_1, \cdots, a_k$ sums to 0,
then there exist numbers $b_1, \cdots, b_k$ such that $a_i=b_i-b_{i+1}$ for $i=1, \cdots, k$.
}
\end{rem}

Consider the following piecewise linear curve $\ga_0$
 in $\R^n$ with one vertex of valence $n+1$
at the origin: the edges out of the origin are rays along the $n+1$ vectors:
$v_0=(1, \cdots, 1), v_1=(-1, 0, \cdots, 0), \cdots,$ $ v_n=(0, \cdots, 0, -1)$.
When $n=2$, this is the plane tropical curve defined by the polynomial $p(x, y)=``x+y+0"$.

Clearly the edges have rational slopes, and the primitive vectors along the edges
are $v_0, \cdots, v_n$. 
These integral vectors satisfy the balancing and nondegeneracy conditions
and hence  $\ga_0$ is a smooth tropical curve.

\begin{prop}Suppose $x$ is a vertex of a tropical plane curve $C$ of valence $n+1$.
Let $e_0, \cdots, e_n$ be the edges out of $x$. For each edge $e_i$,
let $W(e_i)$ be its weight
and  $V(e_i)$ be its primitive integral vector as defined above. 
Then there exists a unique $\Z$-affine linear map $\pi: \R^n \to \R^2$ such that
$$ \pi(0)=x, \quad \pi(v_i)=W(e_i)V(e_i), \quad i=0, \cdots, n.$$
\end{prop}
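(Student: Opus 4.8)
The plan is to construct the map $\pi$ explicitly and then verify uniqueness. Since we are given a vertex $x$ of valence $n+1$, with edges $e_0,\dots,e_n$, weighted primitive vectors $W(e_0)V(e_0),\dots,W(e_n)V(e_n)$ in $\R^2$, and the standard model curve $\ga_0\subset\R^n$ with edge directions $v_0=(1,\dots,1),v_1=-\mathbf e_1,\dots,v_n=-\mathbf e_n$, the natural candidate is the affine map $\pi(w)=Lw+x$ where $L:\R^n\to\R^2$ is the unique linear map sending $v_i\mapsto W(e_i)V(e_i)$ for $i=0,\dots,n$. First I would observe that such a linear $L$ exists and is unique \emph{if and only if} the assignment is consistent, i.e. the single linear relation among $v_0,\dots,v_n$ is matched by the corresponding relation among the target vectors. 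Since $v_1,\dots,v_n=-\mathbf e_1,\dots,-\mathbf e_n$ form a basis of $\R^n$, there is a unique linear $L$ with $L(v_i)=W(e_i)V(e_i)$ for $i=1,\dots,n$; it remains to check that this $L$ also sends $v_0$ to $W(e_0)V(e_0)$.

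The key computation is that $v_0=-(v_1+\cdots+v_n)$, which is precisely the balancing condition (Equation \ref{balance-0}) for the model curve $\ga_0$ — and indeed I would note that $\ga_0$ was already shown in the excerpt to satisfy this. Applying $L$ gives
\begin{equation}
L(v_0)=-\sum_{i=1}^n L(v_i)=-\sum_{i=1}^n W(e_i)V(e_i)=W(e_0)V(e_0),
\end{equation}
where the last equality is exactly the balancing condition of Lemma \ref{balance} applied at the vertex $x$ of the plane tropical curve $C$, i.e. Equation \ref{balance-weight}: $\sum_{i=0}^n W(e_i)V(e_i)=0$. So the map $\pi(w)=Lw+x$ does satisfy all $n+2$ required conditions $\pi(0)=x$ and $\pi(v_i)=W(e_i)V(e_i)$.

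For uniqueness: any $\Z$-affine (indeed any affine) map $\pi'$ satisfying the conditions has $\pi'(0)=x$, so $\pi'(w)=L'w+x$ for a linear $L'$; then $L'(v_i)=W(e_i)V(e_i)$ for $i=1,\dots,n$, and since $\{v_1,\dots,v_n\}$ is a basis of $\R^n$ this forces $L'=L$, hence $\pi'=\pi$. The main obstacle — really the only substantive point — is the verification that the balancing relation on the source ($v_0+\cdots+v_n=0$) is compatible with the balancing relation on the target ($\sum W(e_i)V(e_i)=0$); but this is handed to us directly by Lemma \ref{balance}, so in fact the argument is essentially a bookkeeping exercise in linear algebra once that lemma is invoked. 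One minor point to address in passing is why $L$ can be taken to be, or at least is well-defined as, a map $\R^n\to\R^2$ with no integrality obstruction on $L$ itself: the statement only asserts $\pi$ is $\Z$-affine in the sense of the earlier definition (integral matrix part), and here $L$ does have integer entries because the $v_i$'s are $\pm$ standard basis vectors and the $W(e_i)V(e_i)$ are integral vectors, so the matrix of $L$ in standard coordinates has integer columns; I would remark on this briefly to match the phrase "$\Z$-affine linear map."
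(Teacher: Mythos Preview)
Your proof is correct and follows essentially the same route as the paper: define the affine map on the basis $v_1,\dots,v_n$, use the two balancing conditions (for $\ga_0$ and for $C$ via Lemma~\ref{balance}) to get the condition at $v_0$ for free, and observe integrality of the linear part because the $v_i$ are (negatives of) standard basis vectors and the $W(e_i)V(e_i)$ are integral. The only cosmetic difference is that the paper phrases the first step as ``$v_1,\dots,v_n$ is a basis by the nondegeneracy condition'' rather than by direct inspection, but the content is identical.
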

\begin{proof}
The nondegeneracy condition of $v_0, \cdots, v_n$ implies that $v_1, \cdots, v_n$ are linearly
independent vectors of $\R^n$ and  form a basis of $\R^n$. 
This implies that there exists a unique affine linear map $\pi: \R^n \to \R^2$ such that
$\pi(v_i)=W(e_i)V(e_i)$, $i=1, \cdots, n$, and $\pi(0)=x.$
Since the vectors $v_0, \cdots, v_n$ and the edges $e_0, \cdots, e_n$ satisfy the balancing
condition, i.e., $\sum_{i=0}^n v_i=0$ and $\sum_{i=0}^n W(e_i)V(e_i)$,
it follows that
$\pi(v_0)=W(e_0)V(e_0)$. 
Since  $v_1, \cdots, v_n$ forms  a basis of the lattice $\Z^n$
and $W(e_1)V(e_1), \cdots, W(e_n)V(e_n)$ are integral vectors,
the map $\pi$ is a $\Z$-affine linear map.
\end{proof}

Let $U_0$ be a small neighborhood of the origin in $\ga_0$.
Then when $U_0$ is small enough, $\pi(U_0)$ is a neighborhood of $x$, denoted by $U$,
and the restricted map $\pi_{U_0}: U_0\to U$ is a homeomorphism.
Its inverse  gives a smooth $\Z$-affine structure on $U$:
$$\phi_U=(\pi_{U_0})^{-1}: U\to U_0\subset \R^n.$$

For each point $x\in C$ contained in the interior
of an edge $e$, let $U$ be a neighborhood of $x$ contained in $e$.
Let $V(e)$ be the primitive integral vector of $e$, and $W(e)$ its weight.
Let $v$ be  the unit vector of $\R$ from 0 to 1. Define a $\Z$-affine linear map
$\pi: \R\to \R^2$ by
$$\pi(0)=x, \quad \pi(v)=W(e)V(e).$$ 
Then for a suitable neighborhood $U_0$ of $0$ in $\R$,
$\pi: U_0\to U$ is a homeomorphism, where $U=\pi(U_0)$.
The inverse $\phi_U=\pi^{-1}: U\to U_0\subset \R$
defines a $\Z$-affine structure on $U$. (We note that if we treat $x$
as a vertex of valence 2, this is a special case of the  above general construction
for vertices.)

\begin{prop}\label{normailzation-2} 
For any tropical plane curve $C$, the $\Z$-affine structures on neighborhoods
of points of $C$ defined above give the structure of a smooth tropical curve
on the graph associated with $C$. Denote this smooth tropical curve by $J$.
\end{prop}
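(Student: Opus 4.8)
The plan is to check directly that the local models constructed above constitute a smooth $\Z$-affine atlas on the graph underlying $C$, in the sense of conditions (1)--(3) in the definition of a smooth tropical curve. Concretely, I would verify three things: (i) the charts cover $C$; (ii) at every vertex the chart satisfies the balancing and nondegeneracy conditions and sends incident edges to rational-slope segments; and (iii) on every nonempty overlap the transition map is $\Z$-affine. Item (i) is immediate, since the construction produces a chart around each interior point of an edge and around each vertex. Item (ii) is built into the construction: the chart at a vertex $x_0$ of valence $n+1$ is $\phi_{U_0}=(\pi_{U_0})^{-1}$, and $\pi_{U_0}$ was defined so that the edges at $x_0$ go to segments along the integral vectors $v_0,\cdots,v_n$ of the model curve $\ga_0$; these are integral (hence of rational slope) and, as one checks for $v_0=(1,\cdots,1)$ and $v_i=-e_i$, satisfy $\sum_{i=0}^n v_i=0$ together with the nondegeneracy property. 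The whole point of passing to $\ga_0$ and a chart in $\R^n$ is precisely that this repairs nondegeneracy even at a vertex where the primitive direction vectors of $C$ fail it inside $\R^2$.

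So the real content is item (iii). First I would shrink all the neighborhoods; because $C$ is a graph, after shrinking the only nonempty overlaps of two distinct charts are of two kinds: (a) two edge-interior charts at points of one and the same edge $e$, and (b) an edge-interior chart at a point of an edge $e$ together with the vertex chart at an endpoint of $e$. (Vertex charts at distinct vertices, and edge-interior charts on distinct edges, become disjoint, and the self-overlap of a single chart is trivial.) Case (a) is easy: both charts are inverses of $\Z$-affine parametrizations $s\mapsto x+sW(e)V(e)$ of the line carrying $e$, with the same velocity vector and only the basepoint $x$ differing, so the transition is a translation of $\R$, which is $\Z$-affine.

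The step I expect to be the main obstacle --- really a matter of careful bookkeeping rather than a genuine difficulty --- is case (b). Here the vertex chart at $x_0$ sends the relevant sub-segment of $e$ to a segment $u\mapsto uv_i$ along the model vector $v_i$ attached to $e$, while the edge chart sends the same segment to $u\mapsto u-u_0\in\R$ up to sign, for a real constant $u_0$, the sign depending on the chosen orientation of $e$. Following condition (3), I would take the common ambient space to be $\R^n$, include the one-dimensional target into it linearly by $1\mapsto v_i$, and then realize the transition by a map $\Phi\colon\R^n\to\R^n$ whose linear part is $\pm$ the identity on the line $\R v_i$; since $v_i$ is primitive it may be completed to a $\Z$-basis of $\Z^n$, so this linear part extends to an element of $\mathrm{GL}(n,\Z)$, while the translation part is a real multiple of $v_i$, so $\Phi$ is $\Z$-affine. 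The same computation applies at the other endpoint of $e$, where $e$ carries a different model vector, provided orientations are tracked consistently. With (a) and (b) done, all transition functions are $\Z$-affine, the charts form a smooth $\Z$-affine atlas, and they therefore endow the graph underlying $C$ with the structure of a smooth tropical curve $J$, which would complete the argument.
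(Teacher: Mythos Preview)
Your proposal is correct and follows essentially the same approach as the paper: the paper's proof also reduces to checking compatibility on overlaps, observes that any overlap is an interval of a single edge $e$, and notes that in each chart the direction $W(e)V(e)$ is sent to a primitive integral vector, so the two charts differ by an element of $\mathrm{GL}(N,\Z)$ after embedding in a common $\R^N$. Your case split (edge--edge versus vertex--edge) and your explicit construction of $\Phi$ via completing $v_i$ to a $\Z$-basis simply make this argument more concrete than the paper's one-line invocation that any two primitive integral vectors are $\mathrm{GL}(N,\Z)$-equivalent.
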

\begin{proof}
We need to show that the local charts are compatible.
Let $U_1, U_2$ be two charts of $C$. By our choice,
$U_1\cap U_2$ is an open interval of an edge $e$ of $C$.
Under the embeddings $\phi_1: U_1\to \R^{n_1}, \phi_2: U_2\to \R^{n_2}$,
the integral vector $W(e)V(e)$ along the edge $e$ is mapped to
primitive integral vectors  in $\R^{n_1}, \R^{n_2}$. Embed $\R^{n_1}, \R^{n_2}$
into a common linear space $\R^N$. Since every two primitive integral vectors of $\R^N$ 
are related by an element $g\in \mathrm{GL}(n, \Z)$,  they are compatible. 
\end{proof}
 
 \noindent{\em Proof of Proposition \ref{normalization}.}

By definition, the underlying space of the tropical curve $J$ in Proposition \ref{normailzation-2} 
is the same as $C$, and the identity map from $J$ to $C$ is locally a $\Z$-affine map.
Roughly, the normalization involves both making the edge vectors of every vertex nondenegerate
and scaling $\Z$-affine maps on the edges according to their weights.  

 \begin{rem}
 {\em
 From the above discussion, it is clear that the integral structure $\Z^n$ in $\R^n$ is used crucially. 
 If all the weights $W(e)$ are equal to 1, then primitive integral vectors of edges
 are mapped to primitive integral vectors and the normalization amounts to making
the primitive integral vectors of edges nondegenerate by embedding them into a 
bigger-dimensional linear space.
 }
 \end{rem}
 
\subsection{Identification between smooth tropical curves and metric graphs}

From the definition, it is clear that a smooth tropical curve $\ga$ is topologically a graph.
The following results are well-known and often taken as definitions \cite[Proposition 1.3]{mik3}
\cite[Proposition 3.6]{mikz} \cite[Definition 1.15]{ims}.
We fill in more details for the convenience of the reader. 
 
\begin{prop}\label{tro-metric}
Every smooth tropical curve $\ga$ admits a canonical metric and hence becomes a metric graph.
\end{prop}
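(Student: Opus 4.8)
The plan is to build the metric edge-by-edge from the local $\Z$-affine charts, using the fact that $\Z$-affine transition functions are isometries for a canonically normalized length element. First I would fix an interior point $x$ of an edge $e$ and a chart $\phi_U : U \to \R$. Pulling back the standard coordinate on $\R$ gives a local length measurement $|d\phi_U|$ on $U$; the point to check is that this does not depend on the chart. If $\phi_1, \phi_2$ are two charts around $x$, compatibility (condition (3) of the definition of a smooth tropical curve) says $\phi_1 = \Phi_{12} \circ \phi_2$ for a $\Z$-affine map $\Phi_{12}(v) = Av + b$ with $A \in \mathrm{GL}(1,\Z) = \{\pm 1\}$, so $|d\phi_1| = |A| \cdot |d\phi_2| = |d\phi_2|$. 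Hence the length element is globally well-defined on the interior of every edge, and integrating it assigns a well-defined length $\ell(e) \in (0,\infty]$ to each edge; one observes that the balancing/nondegeneracy conditions and the fact that $\ga$ is a finite graph force these lengths to be finite on compact $\ga$.

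The one genuinely delicate point is what happens at a vertex $x$ of valence $n+1$, where the chart is $\phi_U : U \to \R^n$ and $U$ consists of $n+1$ half-open intervals glued at $x$. On the $i$-th prong $U \cap e_i$ the image $\phi_U(U \cap e_i)$ is a segment with primitive direction vector $v_i \in \Z^n$, so parametrizing the prong by arc length in $\R^n$ would give a length that depends on $\|v_i\|$, which is not intrinsic. The resolution — and I expect this to be the main obstacle to state cleanly — is that the \emph{correct} length element on prong $i$ is the one for which $v_i$ is a unit vector, i.e. one pulls back not the Euclidean metric of $\R^n$ but the $\Z$-affine "lattice-length" along each edge direction. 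Equivalently: a point at parameter $t$ along $\phi_U(U\cap e_i)$ (measured so that $v_i$ has parameter-length $1$) is declared to be at distance $t$ from $x$. I must then check this agrees, on the overlap with an edge-interior chart, with the length element defined in the previous paragraph: the transition there is again a $\Z$-affine map carrying a primitive vector to a primitive vector, and since any two primitive integral vectors of $\R^N$ differ by an element of $\mathrm{GL}(n,\Z)$ (the argument already used in the proof of Proposition \ref{normailzation-2}), the identification is an isometry. The normalization-by-weight discussion of Proposition \ref{normalization} is exactly the bookkeeping that makes this consistent.

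Having a well-defined length element on each edge, I would then define the distance $d(p,q)$ between two points of $\ga$ as the infimum of the lengths of paths from $p$ to $q$, lengths of paths being computed by the local length elements (a path crossing a vertex is split into sub-paths lying in edge or vertex charts). Standard arguments show this is a genuine metric: symmetry and the triangle inequality are immediate from the definition as an infimum over concatenable paths; positivity $d(p,q) > 0$ for $p \neq q$ holds because near any point the length element is bounded below by a positive constant in a chart, so a path leaving a small ball has length bounded below. The resulting metric graph has $\ga$ as underlying graph and the edge lengths $\ell(e)$ computed above; this is the canonical metric, and it is manifestly determined by the $\Z$-affine structure alone, with no choices — which is the assertion of the proposition.

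Finally I would record the naturality that makes this "canonical": any isomorphism of smooth tropical curves (a homeomorphism that is $\Z$-affine in compatible charts) is by construction an isometry for these metrics, since $\Z$-affine maps preserve the lattice-length element. This is the only property actually needed downstream — it is what lets one identify the \emph{moduli} of smooth tropical curves with the \emph{moduli} of metric graphs in the next proposition — so I would state it as part of the proof rather than as a separate lemma.
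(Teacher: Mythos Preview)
Your proposal is correct and follows essentially the same approach as the paper: define the length element on edge interiors by pulling back the standard metric on $\R$, at vertices normalize so that primitive integral direction vectors have length $1$, and verify compatibility on overlaps using the fact that $\Z$-affine transition maps carry primitive integral vectors to primitive integral vectors. The paper's proof is terser --- it does not spell out the path-metric construction or the naturality remark you add at the end --- but the core argument is the same.
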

\begin{proof}
We will define a metric on a smooth tropical curve $\ga$ using a $\Z$-affine structure on charts.
It suffices to define lengths of subintervals of the edges. 
Let $\phi_U: U\to \R$ be a $\Z$-affine chart of an interior point of an edge $e$.
Let $V$ be an open subinterval contained in $U$. Define the length of $V$ to be equal to
the length of the image $\phi_U(V)$ with respect to the standard metric of $\R$. 

If $x$ is a vertex of valence $n+1$ and $\phi_U: U\to \R^n$ is a $\Z$-affine chart,
and $e_0, \cdots, e_n$ are the edges from $x$, and $V(e_i)$ an integral primitive
vector on the ray $\phi_U(U\cap e_i)$, 
we restrict the standard metric of $\R^n$
to $\phi_U(U\cap e_i)$ and scale the metric on
it so that the length of the vector $V(e_i)$ is equal to 1
(i.e., scale the metric on {\em each edge separately}
so that the length of a primitive integral vector along it is equal to 1). 
Then for an interval $V$ of an edge $e_i$ contained in $U$,
the length of $V$ is equal to the length of the image $\phi_U(V)$ with respect to the
scaled metric on the ray  $\phi_U(U\cap e_i)$.

In summary, if a directed
interval of a smooth tropical curve $\ga$ is mapped to a primitive integral
vector under a local $\Z$-affine chart, its length is equal to 1. 
Since these $\Z$-affine structures on overlapping charts are compatible,
integral primitive vectors in one chart are also integral primitive vectors
in another, and the metrics of these charts agree on the overlaps.
By gluing these metrics together, we obtain a metric on the tropical curve $\ga$.
\end{proof}

The converse is also true.

\begin{prop}\label{metric-tro}
Every metric graph admits a canonical structure of a smooth tropical curve. 
\end{prop}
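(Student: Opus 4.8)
The plan is to construct, for a given metric graph $(G, \ell)$, an explicit smooth $\Z$-affine atlas whose induced metric (via Proposition \ref{tro-metric}) recovers $\ell$, and to check compatibility of the charts. First I would handle interior points of edges: near such a point $x$ in an edge $e$ of length $\ell(e)$, I take a neighborhood $U$ homeomorphic to a subinterval of $e$ and define $\phi_U : U \to \R$ to be the arc-length parametrization of $U$ induced by $\ell$ (i.e. identify $U$ isometrically with a subinterval of $\R$). This is trivially an embedding, and by construction a subinterval $V \subset U$ has $\ell$-length equal to the Euclidean length of $\phi_U(V)$, matching the recipe in the proof of Proposition \ref{tro-metric}. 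Any two such charts overlap in a subinterval, and the transition map is a translation possibly composed with $x \mapsto -x$, which is $\Z$-affine, so interior charts are mutually compatible.

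Next I would treat a vertex $x$ of valence $n+1$ with incident edges $e_0, \dots, e_n$. The idea is to use the canonical local model $\ga_0 \subset \R^n$ already introduced in the excerpt: the piecewise-linear star with one vertex at the origin and rays along $v_0 = (1,\dots,1)$, $v_1 = -\mathbf{e}_1, \dots, v_n = -\mathbf{e}_n$, which was shown to satisfy the balancing and nondegeneracy conditions. I take a small neighborhood $U$ of $x$ in $G$ consisting of $n+1$ short subintervals of $e_0, \dots, e_n$ emanating from $x$, and define $\phi_U : U \to \R^n \supset \ga_0$ by sending the initial segment of $e_i$ homeomorphically onto the corresponding initial segment of the ray $\R_{\geq 0} v_i$, with the parametrization scaled so that it is the $\ell$-arc-length parametrization \emph{divided by} $|v_i|$ — equivalently, so that a point at $\ell$-distance $1$ along $e_i$ from $x$ maps to the primitive vector $v_i$. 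Then $\phi_U$ is an embedding onto a neighborhood of the origin in $\ga_0$, the images of the edges are rational-slope segments with primitive direction vectors $v_0, \dots, v_n$ satisfying $\sum v_i = 0$ and the nondegeneracy condition, so this is a legitimate smooth $\Z$-affine chart; and by the scaling choice, the induced metric of Proposition \ref{tro-metric} assigns to a subinterval of $e_i$ near $x$ exactly its $\ell$-length, since a primitive integral vector $v_i$ corresponds to $\ell$-length $1$.

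The remaining point is compatibility of a vertex chart with a neighboring interior-edge chart. On the overlap — an open subinterval of a single edge $e_i$ — one chart is the $\ell$-arc-length map into $\R$ and the other is (a translate of) the $\ell$-arc-length map into the line $\R v_i \subset \R^n$; both parametrizations have the property that a primitive integral direction vector corresponds to unit $\ell$-length, so after embedding $\R$ and $\R^n$ into a common $\R^N$ the two images differ by a $\Z$-affine map taking one primitive integral vector to another, hence by an element of $\mathrm{GL}(N,\Z)$ together with a translation. This is exactly the verification already carried out in the proof of Proposition \ref{normailzation-2}, so I would simply invoke the same argument. I expect the only mildly delicate step to be the vertex chart: one must choose the scalings along the different edges \emph{independently} (each edge separately normalized so its primitive vector has unit $\ell$-length) and check that this is consistent with the balancing condition — but since the combinatorial model $\ga_0$ is fixed in advance and already verified to be a smooth tropical curve, this reduces to bookkeeping rather than a genuine obstacle. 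Finally I would remark that the construction is canonical (independent of orientations of edges, up to the $\Z$-affine equivalence built into the definition) and that it is inverse to the metric of Proposition \ref{tro-metric}, establishing the promised bijective correspondence between compact metric graphs without vertices of valence $1$ or $2$ and compact smooth tropical curves.
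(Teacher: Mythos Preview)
Your proposal is correct and follows essentially the same approach as the paper: for interior points use the arc-length embedding into $\R$, and for a vertex of valence $n+1$ map a neighborhood isometrically onto a neighborhood of the origin in the standard model $\ga_0\subset\R^n$ (with its tropical metric in which each primitive vector $v_i$ has length $1$), then check compatibility via the fact that primitive integral vectors are interchanged by $\mathrm{GL}(N,\Z)$. The paper phrases the vertex chart simply as an isometry onto $\ga_0$ equipped with the metric of Proposition~\ref{tro-metric}, which is exactly your ``$\ell$-distance $1$ maps to $v_i$'' normalization, and verifies compatibility by the equivalent observation that intervals of length $1/N$ map to $1/N$-multiples of primitive integral vectors.
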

\begin{proof}
Let $(G, \ell)$ be a metric graph, and $x$ is a vertex of valence $n+1$.
As in the previous subsection,
let $\ga_0$ be the tropical curve in $\R^n$ with a vertex at $0$ of valence $n+1$,
and 
the edges out of the origin are rays along the vectors
$v_0=(1, \cdots, 1), v_1=(-1, 0, \cdots, 0), \cdots, v_n=(0, \cdots, 0, -1)$.
Put a metric on $\ga_0$ as in the previous proposition.
Map a small neighborhood $U$ of $x$ {\em isometrically} to a small neighborhood
$U_0$ of $0$. This defines a $\Z$-affine structure on $U$. 

If $x\in G$ is an interior point of an edge, and $U$ is an interval of the edge
and contains $x$, we can map $U$ {\em isometrically} to a neighborhood of the origin
of $\R$. This defines a $\Z$-affine structure on $U$.

For every large integer $N$, an interval of length $\frac{1}{N}$ of $\ga$ is mapped
to $\frac{1}{N}$-multiple of a primitive integral vector in $\R^n$, where $\R^n$ depends on the
local chart. Using this, 
it can be checked that these $\Z$-affine structures on charts
on $G$ are compatible and hence define a structure of smooth tropical
curve on the metric graph $(G, \ell)$.  
\end{proof}

The basic point of the proof of the above proposition is that a $\Z$-affine structure
on a one-dimensional manifold is equivalent to a length metric. 
Because of this proposition, in some places such as \cite[Definition 1.15]{ims},
a tropical curve (or rather a smooth tropical curve)
is defined as a metric graph. 

\begin{rem}
{\em We can show easily that the two maps in Proposition \ref{metric-tro}
 are the inverse of each other. 
Specifically,
  start with a smooth tropical curve $J$,  obtain a metric graph $(G, \ell)$  by Proposition 
\ref{tro-metric}, and then obtain a new tropical curve $J'$ from $(G, \ell)$
by Proposition \ref{metric-tro}. It can be shown that $J'$ is isomorphic to $J$, and
the other way of starting with a metric graph can be checked as well.
}
\end{rem}

\begin{rem}
{\em If we start from a tropical plane curve $C\subset \R^n$, we can directly
put a metric on it as follows \cite[p. 512]{mik1}. For every linear segment $e$ of $C$, let $V(e)$
be a primitive integral vector on $e$, and $W(e)$ be the weight of $e$ defined above.
The metric on $e$ is the scaling of the restriction of the standard metric of $\R^2$
 such that the norm of $V(e)$ is equal to $1/W(e)$, in particular, the norm of $V(e)$
is equal to 1 if and only if the weight $W(e)=1$.
We can check that this metric graph is the same metric graph associated with the
normalization $J$ of $C$, which is a smooth tropical curve.

This also allows one to construct alternatively a normalization of  the tropical plane curve
$C$, i.e., by taking the tropical curve
corresponding to the metric graph associated with $C$. 
}
\end{rem}

\section{Jacobian varieties of  Riemann surfaces and the Jacobian map}

Several results on tropical curves have been motivated by results on algebraic curves over
$\C$,
or Riemann surfaces. A particularly important one for us is the Jacobian variety
of a compact Riemann surface. In this section, we recall in detail the notion of
polarized abelian varieties to motivate the corresponding tropical Jacobian
variety for a compact
smooth tropical curve. We also introduce the Jacobian map and 
we state the Torelli Theorem
for Riemann surfaces.

For a compact Riemann surface $\Sigma_g$, we can define its Jacobian variety as follows.
Let $H^{0,1}(\Sigma_g)=H^0(\Sigma_g, \Omega)$ be the space of holomorphic 1-forms on $\Sigma_g$.
It is a complex vector space of dimension $g$. 
The first homology group $H_1(\Sigma_g, \Z)$ is isomorphic to $\Z^{2g}$
and it can be embedded into the dual space $H^0(\Sigma_g, \Omega)^*$ 
of $H^0(\Sigma_g, \Omega)$, $\pi: H_1(\Sigma_g, \Z) \to H^0(\Sigma_g, \Omega)^*$, 
by integration:
For every 1-cycle $\sigma$ and a 1-form $\omega\in H^0(\Sigma_g, \Omega)$,
$$\pi(\sigma)(\omega)=\int_{\sigma}\omega.$$

The complex torus $H_1(\Sigma_g, \Z) \backslash H^0(\Sigma_g, \Omega)^*$ admits a
{\em canonical polarization}
 coming from the pairing $H_1(\Sigma_g, \Z)\times H_1(\Sigma_g, \Z)
\to \Z$ and is a {\em principally polarized Abelian variety}.  It is called the {\em Jacobian variety} of $\Sigma_g$. 
 To explain this,
we discuss the notion of a polarization of an Abelian variety. See \cite{grh} for more details.

Recall that a line bundle $L\to M$ over an algebraic variety $M$ is called {\em very
ample} if its global sections, i.e., elements of $H^0(M, \mathcal O(L))$,
give an embedding $M\to \mathbb P^N$, or equivalently,  if there exists an
embedding $\pi: M\to   \mathbb P^N$, and $L=\pi^* H$, where $H$ is the line bundle
over $\mathbb P^N$ corresponding to a hyperplane of $\mathbb P^N$.
A line bundle $L\to M$ over an algebraic variety $M$ is called {\em ample}
if  for $k\gg 0$, $L^k\to M$ is very ample.
Such an ample line bundle $L\to M$ is called a {\em polarization} of $M$.
The existence of an ample line bundle over $M$ is a necessary and sufficient 
condition for $M$ to be a projective variety.

The Chern class of an ample line bundle $L\to M$ is an integral class in $H^2(M, \Z)$
and can be represented by a closed, positive $(1,1)$-form in the De Rham  cohomology
$H^2(M, \R)$.
Conversely, by the Kodaira embedding theorem, every such {\em positive integral class}
in $H^2(M, \R)$ is the Chern class of an ample line bundle over $M$.

Therefore, a {\em polarization} of an algebraic variety $M$ corresponds to
a {\em positive integral class} in $H^2(M, \R)$. A $(1,1)$-form representing such a
class is called a {\em Hodge form}. 

A complex torus $M=\Lambda\backslash \C^n$ is {\em an  Abelian variety} if it
is a projective variety, i.e., if it admits a polarization.
The existence of a Hodge form  on $\Lambda\backslash \C^n$
depends on the compatibility of the complex structure of $\C^n$ 
and the integral structure of $\C^n$ defined by the lattice $\Lambda$.

Given a basis $\lambda_1, \cdots, \lambda_{2n}$ of $\Lambda$ and 
 a $\C$-basis $e_1, \cdots, e_n$ of $\C^n$, define the {\em period matrix}  $\Omega=
 (\omega_{\alpha i})$
 of size $n\times 2n$ by
 $$\lambda_i=\sum_\alpha \omega_{\alpha i} e_\alpha.$$
 
 By using the fact that every cohomology class can be represented by a
 differential form  which is invariant under translation, one can obtain
 the following {\em Riemann condition for Abelian varieties} \cite[p. 306]{grh}:
A complex torus $M=\Lambda\backslash \C^n$ is an Abelian variety
if and only if there exists a basis $\lambda_1, \cdots, \lambda_{2n}$ of $\Lambda$
and a $\C$-basis $e_1, \cdots, e_n$ of $\C^n$ such that the period matrix
$\Omega$ is of the form
$$\Omega=(\Delta_\delta, Z),$$
where $\Delta_\delta$ is a diagonal matrix with positive integral
 diagonal entries $\delta_1, \cdots, \delta_n$
such that $\delta_1| \delta_2,  \cdots, \delta_{n-1} | \delta_n$,
and $Z$ is symmetric and Im $Z$ is positive definite.

More precisely, let $x_1, \cdots, x_{2n}$ be the coordinates of $\C^n$ determined by 
$\lambda_1, \cdots, \lambda_{2n}$. 
Then under the above condition on the period matrix, the integral 2-form
\begin{equation}\label{hodge-form}
\omega=\sum \delta_\alpha dx_\alpha \wedge d x_{n+\alpha}
\end{equation}
is positive and hence is a Hodge form, which gives a polarization of the complex torus
$\Lambda\backslash \C^n$.
If $\delta_1=\cdots =\delta_n=1$, the polarization is called {\em principal},
and the Abelian variety is called {\em principally polarized}.

Conversely, suppose $\omega$ is a Hodge form which is invariant under translation. 
Then there exists a basis
$\lambda_1, \cdots, \lambda_{2n}$ of $\Lambda$ such that
$\omega$ is of the form in Equation \ref{hodge-form}.
Now computing the form $\omega$ in the complex coordinates determined by $e_1,
\cdots, e_n$ shows that the condition that $\omega$ is a positive (1,1)-form
is exactly equivalent to the condition on the period matrix $Z$.

When $M=H_1(\Sigma_g, \Z)\backslash H^0(\Sigma_g, \Omega)^*$,
for every symplectic basis $a_1, b_1, \cdots, a_g, b_g$ of $H_1(\Sigma_g, \Z)$,
there exists a normalized basis $\omega_1, \cdots, \omega_g$
 of $H^0(\Sigma_g, \Omega)$  such that $\int_{a_i} \omega_j=\delta_{ij}$
 \cite[p. 231]{grh}.
(Note that it is not automatic that for every $j$
there exists a holomorphic differential form
$\omega_j$ such that the period conditions $\int_{a_i} \omega_j=\delta_{ij}$, $i=1, \cdots, g$,
 are satisfied. Once this is known, it is easy to see that different $\omega_j$
are linearly independent forms.)

This implies that with respect to the dual basis of $H^0(\Sigma_g, \Omega)^*$,
the period matrix $(\Delta_\delta, Z)$ of the symplectic basis $a_1, b_1, \cdots, a_g, b_g$ is principal,
i.e., $\Delta_\delta=I_g$. The Riemann bilinear relation \cite[p. 232]{grh}
implies that the period $Z$ is symmetric and that Im $Z$ is positive definite.    
Therefore, the Riemann condition is satisfied by the Jacobian variety
 $H_1(\Sigma_g, \Z)\backslash H^0(\Sigma_g, \Omega)^*$, and the Jacobian variety
is a {\em principally polarized Abelian variety}. 

\begin{rem}\label{char-polar}
{\em The Hodge form $\omega$ is a $(1,1)$-form in 
$H^2(H_1(\Sigma_g, \Z)\backslash H^0(\Sigma_g, \Omega)^*)$,
and hence corresponds to {\em an integral skew-symmetric bilinear
form} on $H_1(\Sigma_g, \Z)$.
It turns out to be
 the intersection form on $H_1(\Sigma_g, \Z)$:
$$H_1(\Sigma_g, \Z)\times H_1(\Sigma_g, \Z) \to \Z,$$
which is a nondegenerate skew-symmetric bilinear form by the Poincare duality.

We will see later that  the Chern class of a line bundle over a tropical curve lies
in the first cohomology group of the tropical curve
and that a polarization corresponds to a positive definite symmetric bilinear
form instead of a skew-symmetric bilinear form. One reason is that a Riemann surface is of
real dimension 2 and a tropical curve is of real dimension 1.
}
\end{rem}

Let $\mathfrak h_g=\{X+i Y\mid X, Y \text{\ are real symmetric $g\times g$-matrices}, Y>0\}$
be the {\em Siegel upper half space}. 
Then the symplectic group $ \mathrm{Sp}(2g, \R)$ acts holomorphically
and transitively on $\mathfrak h_g$, and $\mathfrak h_g\cong  \mathrm{Sp}(2g, \R)/\mathrm{U}(g)$,
a Hermitian symmetric space of noncompact type. 

The above discussion shows that
for every compact Riemann surface $\Sigma_g$, its period $Z$ is in $\mathfrak h_g$.
Different choices of symplectic bases of $H_1(\Sigma_g, \Z)$ lead to different points
in a $ \mathrm{Sp}(2g, \Z)$-orbit in $\mathfrak h_g$. This gives a well-defined map
\begin{equation}
\Pi: \M_g \to  \mathrm{Sp}(2g, \Z)\backslash \mathfrak h_g, \quad \Sigma_g\mapsto 
 \mathrm{Sp}(2g, \Z)Z.
\end{equation}
This is called the {\em period map} for Riemann surfaces.

Let $\mathcal A_g$ denotes the moduli space of {\em principally polarized Abelian
varieties} of dimension $g$. Then the above discussion shows that $\mathcal A_g
\cong    \mathrm{Sp}(2g, \Z)\backslash \mathfrak h_g$, where a point $\tau\in \mathfrak h_g$
corresponds to a principally polarized abelian variety $\Z^n +\tau \Z^n \backslash \C^n$,
and the period map is equivalent to the 
{\em Jacobian map}:
\begin{equation}\label{torelli}
\Pi: \M_g \to \mathcal A_g, \quad \Sigma_g \mapsto  H_1(\Sigma_g, \Z)\backslash H^0(\Sigma_g, \Omega)^*.
\end{equation}

The Jacobian map is a complex analytic map with respect to the natural
complex structures  on $\M_g$ and $ \mathrm{Sp}(2g, \Z)\backslash \mathfrak h_g$. 
It is also a morphism when $\M_g$ and $\mathcal A_g$ are
given the structure of algebraic varieties.
The  {\em Torelli Theorem} \cite[p. 359]{grh} says that $\Pi$ {\em is an embedding}. 
The Jacobian map and the Torelli theorem have played a fundamental role
in understanding the moduli space $\M_g$.

\begin{rem}
{\em The complex torus  $H_1(\Sigma_g, \Z) \backslash H_1(\Sigma_g, \R)$
is usually called the {\em Albanese variety} of $\Sigma_g$. Using the duality between
$H_1(\Sigma, \R)$ and $H^1(\Sigma_g, \R)$ (and hence also $H^{0,1}(\Sigma_g)$),
it can be seen that it is isomorphic to the Jacobian variety of $\Sigma_g$.
In general, the Albanese variety of a higher dimensional complex manifold
or algebraic variety is a generalization of the Jacobian variety of a projective
curve (or a compact Riemann surface). 
}
\end{rem}

\section{Jacobians of tropical curves and metric curves}

As mentioned before, metric graphs considered in this paper have no vertices
of valence 1 or 2.
By the results in Propositions \ref{tro-metric} and \ref{metric-tro}, metric graphs can be identified
with smooth tropical curves. From now on, we will denote metric graphs by $(\ga, \ell)$
instead of $(G, \ell)$ to be consistent with the notation for tropical curves.

Recall from Equation \ref{quad-form} and Proposition \ref{pos-definite}
that for any compact metric graph $(\ga, \ell)$, 
there is a positive definite quadratic form $Q$ on
the real vector space $C_1(\ga, \R)$ of 1-chains 
on $\ga$ with $\R$-coefficients, 
which defines an inner product 
on $H_1(\ga, \R)$ and descends to a flat metric on the torus $H_1(\ga, \Z)\backslash
H_1(\ga, \R)$.

Following the definitions in \cite{ks} \cite{cvi} for (combinatorial) graphs, we have the following
definition. 

\begin{defi}
The torus $H_1(\ga, \Z)\backslash
H_1(\ga, \R)$ with the flat metric induced from the quadratic form $Q$ is called
the Albanese torus of the metric graph $(\ga, \ell)$, and denoted by
$(H_1(\ga, \Z)\backslash H_1(\ga, \R), Q)$.
\end{defi}

\begin{rem}
{\em The definition of the Albanese and Jacobian tori for graphs
were introduced in \cite[p. 94]{ks}.
For a graph $G$, the torus $H_1(G, \Z)\backslash H_1(G, \R)$ with the flat metric
where each edge is assigned length 1 is the Albanese torus,
and the dual flat torus $H^1(G, \Z)\backslash H^1(X, \R)$ is called the Jacobian torus.
These are motivated by, but slightly different from, 
various notions of Jacobian varieties of algebraic
varieties, Kahler manifolds and Riemannian manifolds, 
which are defined as quotients of cohomology
groups.
As recalled in the previous section, 
for a Riemann surface $\Sigma_g$, the Jacobian variety is a quotient of the dual 
of the first cohomology, $H_1(\Sigma_g, \Z)\backslash H^0(\Sigma_g, \Omega)^*$,
where $H^0(\Sigma_g, \Omega)=H^{0,1}(\Sigma_g)\subset H^1(\Sigma, \C)$.
}
\end{rem}

In this metric on the space $C_1(\ga, \R)$ of 1-chains (or the
metric of the torus $H_1(\ga, \Z)\backslash
H_1(\ga, \R)$), the length of an edge $e$ as a 1-chain 
is equal to $\sqrt{\ell(E)}$. 
As mentioned before, this makes the definition of the metric $Q$  seem unnatural 
since its dependence on the metric $\ell$ of the graph $(\ga, \ell)$ is not smooth.
It might be more tempting
to define $Q$ so that  the length of the 1-chain $e$ to be
$\ell(E)$  in order for it to depend
smoothly on the lengths of edges of the graph $(\ga, \ell)$.
(Of course, for graphs whose edges have normalized length equal to 1,
there is no difference in the above two choices.)

In order to explain that the definition  of the quadratic form in Equation \ref{quad-form}
is the right one and that $Q$ gives a {\em principal polarization} when
$H_1(\ga, \Z)\backslash H_1(\ga, \R)$ is given a {\em natural structure of a tropical torus},
 we will identify 
$(H_1(\ga, \Z)\backslash H_1(\ga, \R), Q)$
 with the tropical Jacobian variety of the tropical curve associated with $(\ga, \ell)$. 
(We note that for all metric graphs $(\ga, \ell)$ of genus $g$,  
$H_1(\ga, \R)$ are isomorphic, and $H_1(\ga, \Z)$ are isomorphic lattices.
As recalled below, a tropical structure on $H_1(\ga, \R)$ is determined by another
lattice.)

First, we need to define differential forms on a tropical curve,
and the polarization of a tropical torus.
Then we show that the tropical Jacobian variety of a compact smooth tropical curve
has a canonical principal polarization. We will follow \cite{mikz}. 

Given a compact smooth tropical curve $J$, a {\em 1-form}  $\omega$ 
(i.e., a tropical differential form of degree 1) on $J$ is a collection of {\em 
constant real differential 1-forms}
on the edges of $J$
satisfying the balancing condition at every vertex of $J$.
Specifically, let $x$ be a vertex of valence $n+1$,
and let $U$ be a chart of $x$ with a $\Z$-affine structure
$\phi_U : U\to \R^n$. 
Let $e_0, \cdots, e_n$ be the edges out of the vertex $\phi_U(x)$ in $\R^n$,
and $V(e_i)$ a primitive integral vector along $e_i$ pointing away from the vertex
 $\phi_U(x)$. The 1-form $\omega$ defines a constant differential 1-form
on each $e_i$ as a smooth differential manifold,
and the {\em balancing condition} is
\begin{equation}\label{balance1}
\sum_{i=0}^n \omega(V(e_i))=0.
\end{equation}
It is clear that this balancing condition is preserved by $\Z$-linear maps and hence the
balancing condition is well-defined.

Let $\Omega(J)$ be the space of global 1-forms on $J$. This is a finite dimensional
vector space.

\begin{lem}[{\cite[\S 6.1]{mikz}}]\label{global-form}
If the genus of a compact tropical smooth
curve $J$ is equal to $g$, then $\Omega(J)$ is a real vector space
of dimension $g$.
\end{lem}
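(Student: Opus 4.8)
The plan is to identify $\Omega(J)$ with the first simplicial cohomology $H^1(J,\R)$, which is known to have dimension $g$ since $J$ has genus $g$ (i.e.\ $\pi_1(J)\cong F_g$, equivalently $b_1(J)=g$). A tropical $1$-form $\omega$ assigns to each oriented edge $e$ a constant differential $1$-form on $e$; after fixing for each edge $e$ a primitive integral vector $V(e)$ in a local $\Z$-affine chart (equivalently, after identifying each edge isometrically with an interval so that the "unit" tangent vector is distinguished), $\omega$ is determined on $e$ by the single real number $\omega(V(e))$. This is exactly the data of a real-valued $1$-cochain on the (oriented) edges of $J$, and the balancing condition $\sum_{i=0}^n \omega(V(e_i))=0$ at each vertex is precisely the cocycle condition $\delta\omega=0$ for the simplicial coboundary (with the $V(e_i)$ pointing away from the vertex playing the role of the incidence data). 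So I would set up the linear map $\Omega(J)\to C^1(J,\R)$, $\omega\mapsto (\omega(V(e)))_e$, check it is well-defined independent of the choice of chart (using that $\Z$-affine transition maps send primitive integral vectors to primitive integral vectors, so the value $\omega(V(e))$ is chart-independent up to sign, consistent with the choice of orientation), check it is injective (a constant $1$-form on an interval is determined by its value on one nonzero tangent vector), and identify its image with $Z^1(J,\R)=\ker\delta$.

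The key steps, in order, are: \textbf{(1)} Fix an orientation and a primitive integral tangent vector $V(e)$ on each edge $e$ of $J$, and define $\Phi:\Omega(J)\to C^1(J,\R)$ by $\Phi(\omega)(e)=\omega(V(e))$. \textbf{(2)} Verify $\Phi$ is well-defined: on the interior of an edge a "constant real differential $1$-form" on a $1$-manifold is a scalar multiple of $dt$ in any $\Z$-affine coordinate, and since transition functions are $\Z$-affine (translations composed with $\pm 1$), the pairing $\omega(V(e))$ does not depend on the chart. \textbf{(3)} Verify $\Phi$ is injective: if $\omega(V(e))=0$ for all $e$, then $\omega$ restricts to the zero form on each edge, hence $\omega=0$. \textbf{(4)} Identify the image: the balancing condition \eqref{balance1} at a vertex $x$ of valence $n+1$ with edges $e_0,\dots,e_n$ says $\sum_i \Phi(\omega)(e_i)=0$ where the $V(e_i)$ all point away from $x$; rewriting each $\Phi(\omega)(e_i)$ with respect to the fixed global orientation introduces a sign $\varepsilon(x,e_i)=\pm 1$ according to whether $e_i$ points toward or away from $x$, so the condition becomes $\sum_i \varepsilon(x,e_i)\Phi(\omega)(e_i)=0$, which is exactly $(\delta\Phi(\omega))(x)=0$. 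Conversely any cocycle arises from such an $\omega$. \textbf{(5)} Conclude $\Omega(J)\cong Z^1(J,\R)$; and since $J$ is a finite graph, $\dim Z^1(J,\R)=\#\{\text{edges}\}-\#\{\text{vertices}\}+1=b_1(J)=g$. (One could also phrase steps (4)--(5) as: $Z^1=H^1$ for a graph since $B^1=0$ in top dimension is false in general, but $\dim Z^1 - \dim B^1 = \dim H^1 = g$ and $B^1$ has dimension $\#V-1$; either bookkeeping gives $g$.)

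The main obstacle I anticipate is step \textbf{(2)} together with the bookkeeping in step \textbf{(4)}: making precise what a "constant real differential $1$-form on an edge" means in a way that is manifestly independent of the $\Z$-affine chart, and carefully tracking the sign conventions so that the stated balancing condition (with all $V(e_i)$ pointing \emph{away} from the vertex) translates correctly into the simplicial coboundary (which uses a fixed orientation of each edge). The degenerate-looking vertices — if any vertex were of valence $1$ or $2$ — would need separate comment, but by hypothesis our graphs have all vertices of valence $\ge 3$, so this does not arise; still, one should note that the balancing condition at a valence-$3$ vertex reduces to the familiar "sum of the three values is zero" and that the nondegeneracy condition on the $V(e_i)$ is not needed for this dimension count (it only matters for the tropical structure, not for counting forms). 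Once the identification $\Omega(J)\cong Z^1(J,\R)\cong H^1(J,\R)$ is in hand, the dimension count is immediate from the genus assumption. An alternative, perhaps cleaner, route avoiding cochain sign conventions: pick a spanning tree $T\subset J$; a tropical $1$-form is uniquely determined by its values on the $g$ edges not in $T$ (on tree edges the values are forced by iterated use of the balancing condition, propagating from the leaves of $T$ inward), and conversely any assignment of values to the $g$ non-tree edges extends uniquely — this exhibits $\Omega(J)\cong\R^g$ directly. I would likely present this spanning-tree argument as the main proof and mention the cohomological interpretation as a remark.
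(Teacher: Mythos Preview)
Your proposal is correct. Your spanning-tree alternative is essentially the paper's own proof: the paper chooses $g$ ``breakpoints'' $p_1,\dots,p_g$ in edge interiors whose removal makes $J$ contractible (equivalently, one point on each edge outside a spanning tree), observes that the balancing conditions determine a global $1$-form from its values at the breakpoints, and checks that all values are attained by an induction that contracts tree edges down to the rose $R_g$. Your primary homological identification $\Omega(J)\cong H_1(J,\R)$ also appears in the paper, but as a separate remark immediately after the lemma rather than as the proof.

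One point to clean up: in your steps (4)--(5) the map you write as $\delta$, sending a $1$-cochain to the function $x\mapsto\sum_i\varepsilon(x,e_i)\Phi(\omega)(e_i)$ on vertices, is the boundary map $\partial:C_1\to C_0$ (after identifying chains and cochains via the edge/vertex bases), not the simplicial coboundary. So the balancing condition identifies $\Omega(J)$ with the space of \emph{cycles} $Z_1(J,\R)$; since a graph has no $2$-cells, $B_1=0$ and $Z_1=H_1$ has dimension $g$ directly. Your parenthetical at the end of (5) mixing $Z^1$, $B^1$, $H^1$ is tangled (with the standard conventions $Z^1=C^1$ for a graph) and should simply be replaced by this observation. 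The spanning-tree version sidesteps the bookkeeping entirely and is the cleaner write-up.
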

\begin{proof}
Since $J$ is homotopy equivalent to  a wedge of $n$ circles, there are $g$ points,  $p_1, \cdots,
p_g$,  contained in 
the interior of edges of $C$ such that $C-\{p_1, \cdots, p_g\}$ is contractible.
They are called breakpoints.
From the balancing conditions at the vertices, it is clear that a global 
1-form on $J$ is determined by its values on primitive integral tangent vectors
of the edges containing these breakpoints $p_1, \cdots,
p_g$. It can also be seen that  all values at the breakpoints
can be achieved by some 1-forms. For example, this is true for the wedge of $n$-circles.
By noting that the balancing conditions are preserved by contracting edges of the complement
of the breakpoints, we can prove this by induction on the number of edges by starting
from the wedge of circles. 
This proves that $\Omega(J)$ is of dimension $g$.
\end{proof}

\begin{defi}
A 1-form $\omega \in \Omega(J)$ is called  integral if it takes an integral value
on every integral tangent vector of every edge of $J$.
\end{defi}

We note that the notion of integral tangent vectors of each edge of $J$ can be
defined using $\Z$-affine local charts and it is well-defined. 
By the same proof of Lemma \ref{global-form}, we obtain the following.

\begin{lem}
Let $\Omega_\Z(J)$ denote the subspace of integral 1-forms on $J$.
Then $\Omega_\Z(J)$ is a lattice of the real vector space $\Omega(J)$.
\end{lem}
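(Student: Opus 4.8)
The plan is to reuse the ``breakpoint'' description of $\Omega(J)$ from the proof of Lemma \ref{global-form} and to show that, under that description, $\Omega_\Z(J)$ is carried onto the standard lattice $\Z^g\subset\R^g$. Fix breakpoints $p_1,\dots,p_g$ in the interiors of edges of $J$ such that $J\setminus\{p_1,\dots,p_g\}$ is contractible, and for each $j$ choose a primitive integral tangent vector $V_j$ at $p_j$. As established in the proof of Lemma \ref{global-form}, the evaluation map
\begin{equation}
\Phi\colon \Omega(J)\to \R^g,\qquad \omega\mapsto(\omega(V_1),\dots,\omega(V_g))
\end{equation}
is a linear isomorphism: it is injective because the balancing conditions at the vertices propagate the values of $\omega$ along the tree $J\setminus\{p_1,\dots,p_g\}$ and hence determine $\omega$ from $\Phi(\omega)$, and surjective because every prescribed tuple of values is realized by some $1$-form.

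First I would check $\Phi(\Omega_\Z(J))\subseteq\Z^g$: if $\omega$ is integral, then by definition it takes an integral value on every integral tangent vector of every edge, in particular $\omega(V_j)\in\Z$ for all $j$, so $\Phi(\omega)\in\Z^g$. Next, the converse $\Phi^{-1}(\Z^g)\subseteq\Omega_\Z(J)$, which is the substantive point. Let $\omega\in\Omega(J)$ with $\Phi(\omega)\in\Z^g$, and let $w$ be any integral tangent vector of any edge $e$. Since $J\setminus\{p_1,\dots,p_g\}$ is a tree, there is a path of edges from the edges containing the breakpoints to $e$, and at each intervening vertex the balancing condition (\ref{balance1}) expresses $\omega$ evaluated on one outgoing primitive integral vector as a $\Z$-linear combination of its values on the other outgoing primitive integral vectors. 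Because all vectors occurring are integral and the transition maps between $\Z$-affine charts lie in $\mathrm{GL}(n,\Z)$ --- hence carry integral tangent vectors to integral tangent vectors, and primitive ones to primitive ones --- the value $\omega(w)$ is a $\Z$-linear combination of $\omega(V_1),\dots,\omega(V_g)$, so $\omega(w)\in\Z$ and $\omega\in\Omega_\Z(J)$. This is precisely the induction on the number of edges used in the proof of Lemma \ref{global-form}, now carried out over $\Z$ rather than $\R$.

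Combining the two inclusions, $\Phi$ restricts to a group isomorphism from $\Omega_\Z(J)$ onto $\Z^g$. Since $\Phi$ is a linear isomorphism of $\Omega(J)$ onto $\R^g$ carrying $\Omega_\Z(J)$ onto $\Z^g$, the subgroup $\Omega_\Z(J)$ is discrete and $\R$-spans $\Omega(J)$; that is, it is a lattice of rank $g=\dim_\R\Omega(J)$.

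The main obstacle is the careful bookkeeping in the second inclusion: one must verify that ``$\omega$ takes integral values on the breakpoint vectors'' forces integrality on \emph{all} integral tangent vectors of \emph{all} edges, which reduces to checking that the balancing relations and the $\mathrm{GL}(n,\Z)$ transition functions preserve integrality, together with the well-definedness of ``integral tangent vector'' across overlapping charts (noted in the excerpt). Everything else is formal once the breakpoint isomorphism of Lemma \ref{global-form} is in hand.
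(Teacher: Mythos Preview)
Your proposal is correct and follows exactly the route the paper indicates: the paper's entire proof is the sentence ``By the same proof of Lemma \ref{global-form}, we obtain the following,'' and you have simply spelled out what that means --- the breakpoint evaluation isomorphism $\Phi:\Omega(J)\to\R^g$ carries $\Omega_\Z(J)$ onto $\Z^g$ because the balancing relation $\omega(V_0)=-\sum_{i\geq 1}\omega(V_i)$ has integer coefficients and hence propagates integrality from the breakpoint edges to all edges. Your write-up is more detailed than the paper's, but the approach is identical.
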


Let $\Omega(J)^*$ be the dual space of linear functionals on $\Omega(J)$.
We now define a pairing between $H_1(J, \Z)$ and $\Omega(J)$,
and hence a map
$$p: H_1(J, \Z) \to \Omega(J)^*.$$
(Note that $p$ stands for period.)
For any path $\gamma: [a, b]\to J$, the pull back $\gamma^* \omega$ is a piecewise-constant 
differential 1-form on $[a, b]$
 (with possible discontinuity at the preimage of the vertices),
and the integral $\int_\gamma \omega$ is defined to be $\int_a^b \gamma^* \omega$.
This implies that for any cycle $\sigma$ and any 1-form $\omega$,
we can define
\begin{equation}\label{period-map-2} 
p(\sigma)(\omega)=\int_\sigma \omega.
\end{equation}

Clearly, this map $p: H_1(J, \Z) \to \Omega(J)^*$ is linear and can be extended to
a linear transformation 
$$p: H_1(J, \R) \to \Omega(J)^*.$$

\begin{prop}\label{lattice-embed}
The period map $p: H_1(J, \Z) \to \Omega(J)^*$ defined above is an embedding,
and the linear transformation $p: H_1(J, \R) \to \Omega(J)^*$ is an isomorphism. 
\end{prop}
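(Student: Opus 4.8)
The plan is to verify the two claims --- injectivity of $p$ on $H_1(J,\Z)$ and bijectivity of the linear extension $p:H_1(J,\R)\to\Omega(J)^*$ --- by a single dimension-and-pairing argument. First I would observe that both $H_1(J,\R)$ and $\Omega(J)^*$ are real vector spaces of dimension $g$: the former because $J$ is homotopy equivalent to a wedge of $g$ circles (genus $g$), and the latter by Lemma \ref{global-form}. Consequently, to prove that the linear map $p:H_1(J,\R)\to\Omega(J)^*$ is an isomorphism it suffices to show it is injective; and injectivity on $H_1(J,\R)$ immediately gives injectivity on the subset $H_1(J,\Z)$ as well. So the whole statement reduces to: if $\sigma\in H_1(J,\R)$ satisfies $\int_\sigma\omega=0$ for every $\omega\in\Omega(J)$, then $\sigma=0$.

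To prove this, I would argue by contradiction and use the breakpoints $p_1,\dots,p_g$ from the proof of Lemma \ref{global-form}: cutting $J$ at these points yields a contractible graph, and a cycle $\sigma$ is determined by its "winding coefficients'' $a_1,\dots,a_g\in\R$ recording how $\sigma$ crosses each breakpoint $p_j$ (equivalently, the coefficients of $\sigma$ in a basis of $H_1(J,\R)$ dual to the $p_j$). The key point, already established in the proof of Lemma \ref{global-form}, is that for each $j$ there is an integral global $1$-form $\omega_j\in\Omega_\Z(J)$ taking value $1$ on the primitive integral tangent vector at $p_j$ and value $0$ at the other breakpoints. Integrating such an $\omega_j$ against $\sigma$ picks out exactly the coefficient $a_j$, i.e. $\int_\sigma\omega_j=a_j$ (here one uses that $\int_\sigma\omega$ depends only on the homology class of $\sigma$, which follows from the balancing condition \eqref{balance1}: $\omega$ is "closed" so its integral over a nullhomotopic loop vanishes). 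Hence if $\int_\sigma\omega=0$ for all $\omega$, then in particular $a_j=\int_\sigma\omega_j=0$ for every $j$, so $\sigma=0$. This establishes injectivity of $p$ on $H_1(J,\R)$, hence on $H_1(J,\Z)$, and --- by the dimension count --- the isomorphism $p:H_1(J,\R)\xrightarrow{\ \sim\ }\Omega(J)^*$.

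The main obstacle, and the step requiring the most care, is the precise claim that integration of a closed tropical $1$-form over a cycle is a homology invariant and that the pairing matrix between a well-chosen basis of cycles (coming from the breakpoints) and the dual basis $\{\omega_j\}$ of forms is the identity (or at least invertible). Both facts are morally "tropical Stokes / tropical de Rham", and the honest way to do it is to run the same induction on the number of edges used in Lemma \ref{global-form}: start from the rose $R_g$, where the statement is transparent, and check that contracting an edge in the complement of the breakpoints preserves both $\Omega(J)$ (via balancing) and the period pairing. I would also make sure to note that $p$ being defined by integration against all of $\Omega(J)$, not just $\Omega_\Z(J)$, is what forces surjectivity once injectivity and the dimension count are in hand; the integral sublattice $\Omega_\Z(J)$ only enters later, when one discusses the polarization.
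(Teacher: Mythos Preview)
Your overall architecture matches the paper's: both reduce everything to injectivity of $p$ on $H_1(J,\R)$ and then invoke the dimension count $\dim H_1(J,\R)=g=\dim\Omega(J)^*$.

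Where you diverge is in the proof of injectivity, and here the paper's route is considerably cleaner. You try to build a \emph{dual basis}: forms $\omega_j$ with value $1$ at breakpoint $p_j$ and $0$ at the others, and you assert $\int_\sigma\omega_j=a_j$. That equality is not correct. The form $\omega_j$ is determined on \emph{all} edges by balancing, not just at the breakpoints, so $\int_{\sigma_i}\omega_j$ picks up contributions away from $p_j$; for the theta graph, for instance, the resulting pairing matrix is
\[
\begin{pmatrix}\ell(a)+\ell(c)&\ell(c)\\ \ell(c)&\ell(b)+\ell(c)\end{pmatrix},
\]
which is exactly the period matrix $Q$, not the identity. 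You anticipate this (``or at least invertible'') and propose an induction on edge-contractions to rescue invertibility, which can be made to work but is delicate: one must track how both $\Omega(J)$ and the pairing behave under contraction.

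The paper bypasses all of this. Given a nonzero cycle $\sigma=\sum_e a_e\,e$ (edges oriented, length parameters $t_e$), it simply takes the $1$-form with the \emph{same} coefficients, $\omega=\sum_e a_e\,dt_e$; the cycle condition on $\sigma$ is exactly the balancing condition on $\omega$, so $\omega\in\Omega(J)$. Then
\[
p(\sigma)(\omega)=\sum_e a_e^2\,\ell(e)>0,
\]
which is nothing but $Q(\sigma,\sigma)$. One line, no dual basis, no induction---and it ties the injectivity directly to the positive-definiteness of $Q$, which is the point of the whole section. Your approach would eventually rediscover this (the pairing matrix you are trying to invert \emph{is} $Q$), but the paper gets there immediately by pairing $\sigma$ against ``itself''.
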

\begin{proof}
If we identify a tropical curve $\ga$  with its associated metric graph $(\ga, \ell)$, 
and on each edge $e$ use we the length
function $t$ measured from a vertex as the parameter, then the restriction of a 1-form
$\omega$ to $e$ is given by $a\ dt$, where $a$ is a constant.
At each vertex $x$ of the graph $\ga$, let $e_1, \cdots, e_k$ be the edges out of $x$,
and $t_1, \cdots, t_k$ be the length functions measured from $x$.
Suppose that $\omega$ is a 1-form, and on each edge $e_i$, $\omega=a_i\ d t_i$.
Then {\em the balancing condition} at $x$ for the 1-form $\omega$ in Equation (\ref{balance1})
 is equivalent to 
\begin{equation}\label{balance-2}
\sum_{i=1}^k a_i=0.
\end{equation}

If we choose an orientation of an edge $e$, then there is a choice of a length parameter $t_e$
such that $\int_e dt_e>0$.
Fix an orientation of all edges $e$ of $\ga$ and the compatible choices of the length
parameters $t_e$. 
For any nontrivial cycle $\sigma=\sum_{e} a_e a$, define a 1-form
$\omega=\sum_{e} a_e\ dt_e$ on $\ga$, 
which means that it restricts to $a_e d t_e$ on each edge $e$.
The compatibility of the edges contained in $\sigma$ at the vertices
to make it a cycle implies that
$\omega$ is a 1-form on $\ga$.
Since
$$p(\sigma)(\omega)=\sum_{e} a_e^2\int_e\ dt_e=\sum_{e} a_e^2 \ell(e)>0,$$
this implies that $p(\sigma)$ is nonzero, and  hence $p$ is an embedding.

Since the dimensions of $H_1(J, \R)$ and $\Omega(J)^*$ are both
equal to $n$, the extended linear transformation is an isomorphism,
and the proposition is proved. 
\end{proof}

\begin{rem}\label{cycle-form-1}
{\em The above interpretation of 1-forms $\omega$ on a tropical curve $\ga$  in terms of
length parameters of a metric graph $(\ga, \ell)$ gives a different proof of Lemma \ref{global-form}.
Fix an orientation of edges $e$ of $\ga$ and the corresponding choices of the length
parameters $t_e$ as above.
We can define a linear isomorphism between $H_1(\ga, \R)$ and $\Omega(\ga)$.
For every 1-cycle $\sigma=\sum_{e} a_e e$, define a 1-form
$\omega=\sum_{e} a_e \ d t_e$, which means that the restriction of $\omega$ to each edge
$e$ is equal to $a_e$. Conversely, for every 1-form
$\omega=\sum_{e} a_e \ d t_e$, we have a 1-cycle $\sigma=\sum_{e} a_e e$. 
Therefore, the dimension of  $\Omega(\ga)$ is equal to the dimension of $H_1(\ga, \R)$,
which is equal to $g$. 
}
\end{rem}

By Proposition \ref{lattice-embed}, 
the quadratic form $Q$ on $H_1(J, \R)$ defines a quadratic form on $\Omega(J)^*$.
The positive definite quadratic form
$Q$ on $\Omega(J)^*$ gives an isomorphism $\Omega(J)^*\to \Omega(J)$, and together with
the map $p: H_1(J, \Z)\to \Omega(J)^*$ in Proposition  \ref{lattice-embed}, it
gives 
an embedding 
\begin{equation}\label{int-cycle}
i_Q: H_1(J, \Z)\to \Omega(J).
\end{equation}

 We emphasize that this embedding $i_Q$ depends on the positive definite quadratic form $Q$.
The following proposition explains the {\em reason for
the choice of value} $Q(e, e)$ for an edge $e$
in Definition (\ref{quad-form}) of the quadratic form $Q$ of a compact
metric graph.

\begin{prop}\label{principal}
In the above notation, the image of the embedding $i_Q: H_1(J, \Z)\to \Omega(J)$ is
equal to the space of $\Omega_\Z(J)$ of integral 1-forms. 
%In other words, for the map $p: H_1(J, \Z) \to \Omega_\Z(J)^*$ in the previous
%lemma, the image $p(H_1(J, \Z))=\Omega_\Z(J)^*$.
\end{prop}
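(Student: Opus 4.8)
The plan is to compute the embedding $i_Q$ completely explicitly and to recognize it as the tautological isomorphism $H_1(J,\R)\to\Omega(J)$, $\sigma=\sum_e a_e\,e\mapsto\iota(\sigma):=\sum_e a_e\,dt_e$, which already appeared in Remark \ref{cycle-form-1}; granting that identification, the Proposition reduces to a matching of integrality conditions. So first I would fix, once and for all, an orientation of every edge $e$ of $\ga$ together with the compatible arc-length parameter $t_e$, exactly as in the proof of Proposition \ref{lattice-embed}. Recall from that proof and from Remark \ref{cycle-form-1} that $\iota$ is a well-defined isomorphism precisely because the requirement that $(a_e)$ be a cycle at each vertex is literally the balancing condition (\ref{balance-2}) for the $1$-form $\sum_e a_e\,dt_e$.

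\emph{Step 1 (identifying $i_Q$).} From the description of the period pairing in the proof of Proposition \ref{lattice-embed}, for $\sigma=\sum_e a_e\,e$ and $\tau=\sum_e b_e\,e$ one has $p(\sigma)(\iota(\tau))=\int_\sigma\iota(\tau)=\sum_e a_e b_e\,\ell(e)=Q(\sigma,\tau)$, the last equality being exactly Equation (\ref{quad-form}). By construction the quadratic form on $\Omega(J)^*$ is the push-forward of $Q$ along the isomorphism $p$ of Proposition \ref{lattice-embed}, so $Q(p(\sigma),p(\tau))=Q(\sigma,\tau)$, and the musical isomorphism $\Omega(J)^*\to\Omega(J)$ it induces is characterized by $\psi(i_Q(\sigma))=Q(p(\sigma),\psi)$ for all $\psi\in\Omega(J)^*$. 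Writing $\psi=p(\tau)$ and using that identity together with the symmetry of $Q$, one gets $p(\tau)(i_Q(\sigma))=Q(\sigma,\tau)=Q(\tau,\sigma)=p(\tau)(\iota(\sigma))$; since $p$ is surjective onto $\Omega(J)^*$, this forces $i_Q(\sigma)=\iota(\sigma)$ for every $\sigma\in H_1(J,\R)$.

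\emph{Step 2 (matching the integral structures).} On each edge $e$ the unit tangent $\partial/\partial t_e$ is a \emph{primitive integral} tangent vector: by the normalization of the canonical metric in Propositions \ref{tro-metric} and \ref{metric-tro}, the $\Z$-affine charts along edges are isometries onto intervals of $\R$, so primitive integral vectors have length $1$. Hence a $1$-form $\sum_e b_e\,dt_e$ belongs to $\Omega_\Z(J)$ iff each $b_e\in\Z$. By Step 1, $i_Q(H_1(J,\Z))$ is exactly the set of $\sum_e a_e\,dt_e$ with all $a_e\in\Z$ coming from an integral cycle $\sum_e a_e\,e$; and since membership in $\Omega(J)$ (the balancing condition) and membership in $H_1(J,\R)$ (the cycle condition) cut out the same linear subspace of coefficient vectors by Remark \ref{cycle-form-1}, the integral $1$-forms are precisely the images of the integral cycles. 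This gives $i_Q(H_1(J,\Z))=\Omega_\Z(J)$.

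\emph{Where the work is.} Nothing here is deep, and the main care is needed in Step 1: one must keep straight that the form placed on $\Omega(J)^*$ is the push-forward of $Q$ under $p$ (not some a priori unrelated form), and chase the two musical isomorphisms so that no stray factor or sign survives. It is worth noting that this is exactly the point where the choice $Q(e,e)=\ell(e)$ in Equation (\ref{quad-form}) is forced: with a general choice $Q(e,e)=f(\ell(e))$ as in Remark \ref{many-def} the clean identity $p(\sigma)(\iota(\tau))=Q(\sigma,\tau)$ fails and $i_Q$ no longer carries $H_1(J,\Z)$ onto $\Omega_\Z(J)$ --- this is the ``reason for the choice of value'' promised just before the statement.
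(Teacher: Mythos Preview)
Your proof is correct and follows essentially the same approach as the paper: both compute that $i_Q(\sigma)$, restricted to an edge $e$, equals $a_e\,dt_e$, and then observe that integrality of the form is equivalent to integrality of all the $a_e$. Your Step~1 unpacks the identification $i_Q=\iota$ more carefully than the paper does (the paper writes the key identity $\int_e i_Q(\sigma)=Q(e,\sigma)=a_e\ell(e)$ ``by definition'' and only spells out $i_Q=\iota$ afterwards in Remark~\ref{cycle-form-2}), but the content is the same.
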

\begin{proof} We need to show that for any 1-cycle $\sigma=\sum_{e}a_e e$ in $H_1(J, \R)$, 
the image $i_Q(\sigma)\in \Omega_\Z(J)$ if and only if $\sigma\in H_1(J, \Z)$.
For every edge $e$ of $\ga$, 
let $V(e)$ be an integral primitive vector tangent to $e$. 
By the definition of the metric on the tropical curve,
the norm of $V(e)$ is equal to 1. 
By definition of the integral 1-forms $\omega$, $i_Q(\sigma)$ is integral
if and only if $i_Q(\sigma)(V(e))$ is integral for every edge $E$, which is equivalent to 
the fact that 
$\int_e i_Q(\sigma)$ is an integral multiple of $\ell(e)$.
On the other hand, by definition,
$$\int_e i_Q(\sigma)=Q(e, \sigma)=a_e \ell(e).$$ 
Therefore, $i_Q(\sigma)$ is an integral 1-form if and only
if $\sigma$ is an integral 1-cycle, i.e., all coefficients $a_e$ are in $\Z$.  
\end{proof}

 \begin{rem}\label{cycle-form-2}
 {\em If we orient all edges $e$ of $\ga$ and pick  compatible length
 parameters $t_e$ for them as in Proposition \ref{lattice-embed}, then 
 a 1-form $\omega$ on $\ga$ is integral if and only if for every edge $e$,
 the restriction of $\omega$ to $e$ is an integral multiple of $d t_e$,
 and the map
 in Proposition \ref{principal} can be written down explicitly as follows:
 For every cycle $\sigma=\sum_{e} a_e e$, the restriction of the 1-form
 $i_Q(\sigma)$
 to the edge $e$ is equal to  $a_e d t_e,$ i.e., $i_Q(\sigma)=\sum_{e} a_e \ d t_e$.
 As explained in the proof of Proposition \ref{principal},
 the definition of the quadratic form $Q$ on $H_1(\ga, \Z)$ was motivated
 to obtain this natural equivalence. 
 
 Specifically, $Q$ is the unique positive definite quadratic form
which gives an isomorphism $\Omega(\ga)\cong \Omega(\ga)^*$
such that  once it is composed with the isomorphism 
$H_1(\ga, \R)\to \Omega(\ga)$ in Remark \ref{cycle-form-1},
 the map $H_1(\ga, \Z)\to \Omega(\ga)^*$ is equal to the period
 map $p: H_1(\ga, \Z)\to \Omega(\ga)^*$ in Equation (\ref{period-map-2}). 
 Since both the period map and the isomorphism in Remark \ref{cycle-form-1} are natural,
 the definition of $Q$ and the induced isomorphism  $\Omega(\ga)\cong \Omega(\ga)^*$
are also natural.
 }
 \end{rem}

 Before continuing,  we  recall the definition of  tropical torus according to 
\cite[\S 5.1]{mikz}.
{\em A tropical torus is a real torus $\Lambda \backslash \R^n$ with a $\Z$-affine structure. }
A {\em $\Z$-affine structure} on $\R^n$ is determined by an {\em integral structure}, 
i.e., a lattice $L\subset \R^n$. 
Unless specified otherwise, the $\Z$-affine structure is determined
by the standard lattice $\Z^n$ in $\R^n$.
Then for any lattice $\Lambda\subset \R^n$, the torus 
$X=\Lambda\backslash \R^n$ has the induced
$\Z$-affine structure.  (Note that $\Lambda$ is not commensurable with $\Z^n$ 
or does not satisfy other relations with $\Z^n$ in general). 

More generally, suppose $V$ is a real vector space and $L\subset V$ is a lattice.
Then $V$ has an integral and $\Z$-affine structure determined by $L$.
For any lattice $\Lambda\subset V$, the torus $\Lambda\backslash V$ is a tropical
torus, whose tropical structure is determined by the lattice $L$.

The subspace  $\Omega_\Z(J)$ of integral 1-forms defines a dual
 lattice $\Omega_\Z(J)^*$ in
$\Omega(J)^*$,  and hence defines a $\Z$-affine structure on $\Omega(J)^*$
and also on the torus $H_1(J, \Z) \backslash \Omega(J)^*$.

\begin{defi}
For a smooth tropical curve $J$,  %(or a metric graph $(J, \ell)$),
the torus  $H_1(J, \Z) \backslash \Omega(J)^*$ with the $\Z$-affine structure
defined above by the space  $\Omega_\Z(J)$ of integral 1-forms
is called the tropical Jacobian variety of the tropical curve $J$.
\end{defi}

\begin{rem}
{\em
Clearly, $H_1(J, \R)$ can be mapped isomorphically to $\Omega(J)^*$ by integration
as in Proposition \ref{lattice-embed},
and $H_1(J, \Z) \backslash \Omega(J)^*$ is isomorphic to the Albanese torus
$H_1(J, \Z) \backslash H_1(J, \R)$. Under this isomorphism, the $\Z$-affine structure
on $H_1(J, \Z) \backslash H_1(J, \R)$, or rather on
$H_1(J, \R)$, depends on the positive definite quadratic form $Q$ 
or the metric $\ell$ of the graph (see Proposition \ref{tropical-flat}).
In general, it is not the $\Z$-affine structure
determined by the integral lattice $H_1(J, \Z)$ in $H_1(J, \R)$. 
}
\end{rem}

Besides defining a flat metric on $H_1(J, \Z) \backslash \Omega(J)^*$,
the positive definite quadratic form $Q$ also defines a polarization, which is defined
as follows. 
Since transition functions between trivializations of a tropical line bundle
over different charts of $X$ are affine linear functions with integral slope, a
tropical line bundle over $X$ corresponds to an element $H^1(X, \mathcal O^*)$,
where $\mathcal O^*=\mathrm{Aff}$ is the sheaf of affine linear functions with integral slope.

Recall that the sheaf  $\mathbb T^*_\Z$ of locally constant integral 1-forms
is defined by the exact sequence

$$0\to \R\to \mathcal O^*\to  \mathbb T^*_\Z\to 0.$$
We note that  the sheaf  $\mathbb T^*_\Z$ is isomorphic to the locally constant
sheaf associated with the abelian group $(\Z^n)^*$. 
This induces a map

$$c: H^1(X, \mathcal O^*)\to H^1(X, \mathbb T^*_\Z).$$
This is called the {\em Chern class} map. %, and for each line bundle $L$ over $X$, $c(L)$ is called its Chern class.

In the case of a complex torus, an ample line bundle $L$ is called a {\em polarization},
or rather its first Chern class is called a polarization. 

In the tropical case, we call {\em the Chern class} $c(L)$ of a tropical line bundle $L$
also {\em a polarization of a tropical torus}, according to See \cite[\S 5.1]{mikz}. It seems more natural to
require also that $c(L)$ is positive in the sense defined below.

Now we  unwrap the definitions and identify the Chern  class of a tropical
line bundle of a tropical torus $X$ with a quadratic form. 
Since $X=\Lambda\backslash \R^n$, 
$$H^1(X, \mathbb T^*_\Z)\cong \Lambda^*\otimes (\Z^n)^*\cong \mathrm{Hom}(\Lambda,
(\Z^n)^*).$$ 
Then a polarization $c(L)$ of a tropical torus $X$
 is a linear map $c(L): \Lambda\to (\Z^n)^*$,
which is equivalent to a bilinear form $Q(L):\Lambda \times \Z^n \to \Z$,
which in turn induces a bilinear form $Q(L)$ on $\R^n$,
$Q(L): (\Lambda \otimes \R) \times (\Z^n\otimes \R)\to \Z\otimes \R$.

\begin{rem}
{\em 
We note that for a complex torus $\Lambda\backslash \C^n$
to admit a polarization, its complex structure of $\C^n$ should be compatible with
the integral structure imposed on it by $\Lambda$.
Similarly, for a real torus $\Lambda\backslash \R^n$ to be a tropical torus
admitting a polarization $c(L)$, the $\Z$-affine structure of $\R^n$ imposed
by the lattice $\Z^n$ should be compatible with the lattice $\Lambda$
in the sense that $c(L)$ induces a linear map $c(L):  \Lambda\to (\Z^n)^*$. 
}
\end{rem}

\begin{prop}[{\cite[\S 5.1]{mikz}}]\label{symmetric-ploar}
For every tropical line bundle $L$ on $X$, the induced bilinear form
$Q(L)$ on $\R^n$ is a symmetric bilinear form, and every symmetric bilinear
symmetric form $Q$ on $\R^n$ that is integral 
in the sense that it comes from a linear map $\Lambda\to (\Z^n)^*$
is the Chern class of a tropical line bundle over $\Lambda\backslash \R^n$.
\end{prop}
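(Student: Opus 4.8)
The plan is to make the cohomological exact sequence completely explicit on a torus $X = \Lambda\backslash\R^n$ and read off what the Chern class of a line bundle is as a concrete bilinear datum.

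First I would unwind the long exact sequence in cohomology attached to $0\to\R\to\mathcal O^*\to\mathbb T^*_\Z\to 0$. Since $X$ is a torus, $H^1(X,\mathbb T^*_\Z)\cong \Lambda^*\otimes(\Z^n)^*\cong\mathrm{Hom}(\Lambda,(\Z^n)^*)$ as stated in the excerpt, and the relevant piece is
\begin{equation}
H^1(X,\mathcal O^*)\xrightarrow{\ c\ } H^1(X,\mathbb T^*_\Z)\xrightarrow{\ \delta\ } H^2(X,\R).
\end{equation}
So a class $\varphi\in\mathrm{Hom}(\Lambda,(\Z^n)^*)$ lifts to a tropical line bundle, i.e.\ lies in the image of $c$, if and only if $\delta\varphi=0$. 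The task is therefore to identify $\delta$ explicitly. I would compute $H^2(X,\R)\cong\Lambda^{*}\wedge\Lambda^{*}$ (alternating forms on $\Lambda$ with real coefficients), and show that the connecting map $\delta$ sends a homomorphism $\varphi:\Lambda\to(\Z^n)^*$ — equivalently a bilinear form $Q(L):\Lambda\times\Z^n\to\Z$, extended to $\Lambda\otimes\R\cong\R^n$ — to its \emph{alternating part}: concretely, using the canonical identification $\Lambda\otimes\R\cong\R^n$ (the tropical torus has $\Lambda$ of full rank $n$, so $\Lambda\hookrightarrow\R^n\hookleftarrow\Z^n$ all span the same real vector space), $\delta\varphi$ is the image of $\varphi$ under $\mathrm{Hom}(\R^n,\R^n)\cong (\R^n)^*\otimes(\R^n)^*\twoheadrightarrow \wedge^2(\R^n)^*$. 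Hence $c(L)$ exists (the obstruction vanishes) exactly when this alternating part is zero, i.e.\ when $Q(L)$ is \emph{symmetric}. This gives both directions at once: the bilinear form attached to any tropical line bundle is symmetric (it is in $\ker\delta$), and conversely any integral symmetric form is in $\ker\delta=\mathrm{im}\,c$, hence is realized by a line bundle.

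For the explicit form of $\delta$ I would use the standard Čech or bar-resolution description of the connecting homomorphism: a class in $H^1(X,\mathbb T^*_\Z)$ is represented by locally constant integral $1$-forms $\{\omega_{ij}\}$ on overlaps with $\omega_{ij}+\omega_{jk}=\omega_{ik}$; one lifts each $\omega_{ij}$ to an affine-linear function $f_{ij}$ with integral slope (a section of $\mathcal O^*$), and $\delta$ is the class of the real-valued cocycle $f_{ij}+f_{jk}-f_{ik}\in\R$. Translating this through the identification $H^1(X,\mathbb T^*_\Z)=\mathrm{Hom}(\Lambda,(\Z^n)^*)$ and $H^2(X,\R)=\wedge^2(\Lambda\otimes\R)^*$ — where a homomorphism $\Lambda\to\R^n$ records how the $\Z$-affine coordinate functions jump under the $\Lambda$-translations, and the obstruction measures the failure of a \emph{global} affine-linear lift, which is precisely the antisymmetrization of the jump data — yields $\delta\varphi=\tfrac12(\varphi-\varphi^{t})$ (the genuinely antisymmetric part), so $\delta\varphi=0\iff\varphi=\varphi^t$. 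One should also remark that the ambiguity in choosing the lifts $f_{ij}$ changes the cocycle by a coboundary, so $\delta$ is well defined, and that this is exactly the tropical analogue of the appearance of the (skew-symmetric) imaginary part of the period matrix in Remark~\ref{char-polar}: here real dimension $1$ instead of $2$ turns "skew-symmetric" into "symmetric."

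\textbf{The main obstacle} will be pinning down the explicit formula for the connecting map $\delta$ and the identification of $H^2(X,\R)$ with $\wedge^2(\Lambda\otimes\R)^*$ in a way that is coordinate-free enough to make the symmetry statement transparent, rather than drowning in Čech indices. I would organize this by first treating $X$ as a real torus and recalling that $H^{\bullet}(X,\R)=\wedge^{\bullet}(\Lambda\otimes\R)^*$ with translation-invariant forms as representatives; then observing that an element of $\mathrm{Hom}(\Lambda,(\Z^n)^*)\subset \mathrm{Hom}(\Lambda\otimes\R,(\Z^n)^*)=(\Lambda\otimes\R)^*\otimes(\Z^n)^*$, once $(\Z^n)^*$ is identified with $(\Lambda\otimes\R)^*$ via the fixed real isomorphism $\Z^n\otimes\R\cong\Lambda\otimes\R=\R^n$, becomes a $2$-tensor on $\R^n$, and $\delta$ is simply the projection onto its wedge component. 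The remaining points — that symmetric integral forms exhaust $\ker\delta$, and that changing the line bundle by a class in $\mathrm{im}(H^1(X,\R)\to H^1(X,\mathcal O^*))$ does not change $c(L)$ — are then immediate from exactness, so no further computation is needed.
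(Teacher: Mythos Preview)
Your proposal is correct and takes essentially the same approach as the paper: the paper's proof also writes down the piece $H^1(X,\mathcal O^*)\to H^1(X,\mathbb T^*_\Z)\to H^2(X,\R)$ of the long exact sequence, identifies $H^1(X,\mathbb T^*_\Z)\cong \Lambda^*\otimes(\Z^n)^*$ and $H^2(X,\R)\cong \wedge^2(\R^n)^*$, and asserts that the boundary map is the skew-symmetrization $(\R^n)^*\otimes(\R^n)^*\to\wedge^2(\R^n)^*$, from which both directions follow. Your write-up simply fills in more of the \v Cech/bar-resolution verification that the paper leaves to the reader.
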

\begin{proof}
In the long exact sequence,
$$\to H^1(X, \mathcal O^*)\to H^1(X, \mathbb T^*_\Z) \to H^2(X, \R),$$
if we identify $H^1(X, \mathbb T^*_\Z)\cong \Lambda^*\otimes (\Z^n)^*$
and $H^2(X, \R)\cong \wedge^2(\R^n)^*$, then one can prove that
the map $H^1(X, \mathbb T^*_\Z) \to H^2(X, \R)$ is the restriction
of the skew-symmetrization map $(\R^n)^*\otimes (\R^n)^*
\to \wedge^2(\R^n)^*.$ Then the proposition is clear. 
\end{proof}

If the associated symmetric bilinear form $Q(L)$ of a tropical line bundle $L$ 
is {\em positive}, the tropical torus $X$ with the line bundle $L$
is called {\em a polarized tropical Abelian variety}.  The index of the image $c(L)(\Lambda)$
in $(\Z^n)^*$ is called {\em the index of the polarization}. If the index is equal to 1,
then the polarization $L$ is called {\em a principal polarization}.

Naturally, a {\em tropical abelian variety} is defined to be
a tropical torus that admits a tropical line bundle
whose quadratic form is positive definite. 

\begin{prop}\label{polarized}
The tropical torus $H_1(J, \Z) \backslash \Omega(J)^*$ with the positive
definite quadratic form $Q$
is a principally polarized tropical abelian
variety. 
\end{prop}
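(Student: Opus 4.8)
The plan is to assemble the statement from the pieces already in place. By Proposition~\ref{polarized-ingredients} available to us --- concretely, Proposition~\ref{lattice-embed} gives the isomorphism $p:H_1(J,\R)\to\Omega(J)^*$ carrying the lattice $H_1(J,\Z)$ to a lattice in $\Omega(J)^*$, so the quotient $H_1(J,\Z)\backslash\Omega(J)^*$ is a genuine real torus of dimension $g$. It carries a $\Z$-affine structure, namely the one defined above by the dual lattice $\Omega_\Z(J)^*\subset\Omega(J)^*$; thus it is a tropical torus in the sense of \cite[\S 5.1]{mikz}. So the first step is just to record that $H_1(J,\Z)\backslash\Omega(J)^*$ is a tropical torus.

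The second and central step is to produce a tropical line bundle on this tropical torus whose associated bilinear form (via Proposition~\ref{symmetric-ploar}) is the positive definite quadratic form $Q$. Here the key input is Proposition~\ref{principal}: the quadratic form $Q$ on $H_1(J,\R)$, transported through $p$ to $\Omega(J)^*$ and then used to identify $\Omega(J)^*\cong\Omega(J)$, sends the lattice $H_1(J,\Z)$ \emph{onto} the integral lattice $\Omega_\Z(J)$. Unwinding the description of polarizations: with $\Lambda=H_1(J,\Z)$, $\R^n=\Omega(J)^*$ and integral structure the dual lattice $\Omega_\Z(J)^*$, a polarization is a linear map $\Lambda\to(\Omega_\Z(J)^*)^*=\Omega_\Z(J)$, equivalently a bilinear form $\Lambda\times\Omega_\Z(J)^*\to\Z$. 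The map $i_Q:H_1(J,\Z)\to\Omega(J)$ of Equation~(\ref{int-cycle}) is exactly such a linear map, landing in $\Omega_\Z(J)$ by Proposition~\ref{principal}. By the surjectivity half of Proposition~\ref{symmetric-ploar}, any integral symmetric bilinear form arises as the Chern class of a tropical line bundle; so since $Q$ is symmetric and, being $i_Q$, is integral in the required sense, there is a tropical line bundle $L$ on $H_1(J,\Z)\backslash\Omega(J)^*$ with $Q(L)=Q$ (up to the identification $\Omega(J)^*\cong\Omega(J)$).

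The third step checks positivity: by Proposition~\ref{pos-definite}, $Q$ is positive definite on $H_1(\ga,\R)$, hence positive definite as a form on $\Omega(J)^*$ after transport; so $L$ is a polarization in the strengthened (positive) sense, making $H_1(J,\Z)\backslash\Omega(J)^*$ a tropical abelian variety. Finally, principality: by Proposition~\ref{principal} the map $i_Q$ carries $H_1(J,\Z)$ \emph{onto} all of $\Omega_\Z(J)$, so the image $c(L)(\Lambda)=\Omega_\Z(J)=(\Omega_\Z(J)^*)^*$ has index $1$; by definition this means the polarization is principal, and we conclude.

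The main obstacle I anticipate is bookkeeping of the several canonical identifications --- $H_1(J,\R)\cong\Omega(J)^*$ via integration, $\Omega(J)^*\cong\Omega(J)$ via $Q$, and the dual lattice identifications $(\Omega_\Z(J)^*)^*\cong\Omega_\Z(J)$ --- and verifying that the ``integrality'' notion for a polarization ($c(L)$ landing in $(\Z^n)^*$, i.e.\ in $(\Omega_\Z(J)^*)^*$) coincides with the statement of Proposition~\ref{principal} that $i_Q(H_1(J,\Z))=\Omega_\Z(J)$. Once the dictionary between the abstract polarization data and the concrete period/quadratic-form data is pinned down, everything is immediate from Propositions~\ref{lattice-embed}, \ref{principal}, \ref{pos-definite} and \ref{symmetric-ploar}; no new computation is needed.
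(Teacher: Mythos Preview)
Your proposal is correct and follows essentially the same approach as the paper's proof: both invoke Proposition~\ref{principal} to see that $Q$ maps $H_1(J,\Z)$ isomorphically onto $\Omega_\Z(J)=(\Omega_\Z(J)^*)^*$, then apply Proposition~\ref{symmetric-ploar} to realize $Q$ as the Chern class of a tropical line bundle, with positivity of $Q$ and the index-$1$ condition following immediately. Your version is more explicit about the dictionary between the abstract polarization data and the concrete lattices, which is exactly the bookkeeping the paper's terse proof leaves to the reader.
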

\begin{proof}
%Now we return to the tropical Jacobian variety $H_1(J, \Z)\backslash \Omega(J)^*$ of a tropical curve.
%The integral and hence the $\Z$-affine structure on 
%the real vector space $\Omega(J)^*$ is given by the lattice
%$\Omega_\Z(J)^*$, whose dual is the lattice $\Omega_\Z(J)$ in $\Omega(\ga)$.
The quadratic form $Q$ on $\Omega(J)^*$ is positive definite
and maps $H_1(J, \Z)$ isomorphically to $(\Omega_\Z(J)^*)^*=\Omega_\Z(J)$ 
by Proposition \ref{principal}.
Therefore, by Proposition \ref{symmetric-ploar}, it comes
from a tropical line bundle over the tropical torus $H_1(J, \Z)\backslash \Omega(J)^*$, 
and the induced polarization on $H_1(J, \Z)\backslash \Omega(J)^*$ is principal.
\end{proof}

\begin{rem}
{\em
For a polarized abelian variety $M$, the Hodge class $\omega$ defines a non-degenerate
skew-symmetric bilinear form on $H_1(M, \Z)$ and hence a linear map $i_\omega: 
H_1(M, \Z)\to H_1(M, \Z)^*$. It is principal if and only if the image $i_\omega(H_1(M, \Z))$
has index 1 in $H_1(M, \Z)^*$. 
See Remark \ref{char-polar} for the case of Jacobian varieties.
Therefore, the definition of a principal polarization of a tropical abelian
variety is consistent with the definition of a principal polarization 
for complex abelian varieties. 
}
\end{rem}

For applications of  tropical Jacobian varieties to metric graphs,
we need to identify principally polarized tropical tori with compact flat Riemmannian tori.

\begin{prop}\label{tropical-flat}
There exists a natural 1-1 correspondence between the space $\mathcal A_n^{trop}$
of principally polarized tropical abelian varieties of dimension $n$
and the space $\mathcal {FT}_n$ of compact flat Riemannian tori of dimension $n$.
\end{prop}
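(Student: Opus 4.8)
The plan is to produce the correspondence by showing that both sides are canonically parametrized by the same data, namely a positive definite quadratic form on an $n$-dimensional real vector space together with a compatible lattice. First I would describe the map from principally polarized tropical abelian varieties to flat tori. A principally polarized tropical abelian variety is a tropical torus $X = \Lambda\backslash \R^n$ (the $\Z$-affine structure coming from the standard lattice $\Z^n$) equipped with a tropical line bundle $L$ whose associated symmetric bilinear form $Q(L)$ on $\R^n$ is positive definite and whose Chern class $c(L)\colon \Lambda \to (\Z^n)^*$ has index $1$, i.e.\ is an isomorphism onto $(\Z^n)^*$. The positive definite form $Q(L)$ is exactly a flat Riemannian metric on $\R^n$, hence descends to a flat Riemannian metric on the real torus $\Lambda\backslash \R^n$; this gives an element of $\mathcal{FT}_n$. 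I would check that isomorphisms of principally polarized tropical abelian varieties (i.e.\ $\Z$-affine isomorphisms of the tori respecting the polarizations) go to isometries of flat tori, so the map is well defined on isomorphism classes.

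Next I would construct the inverse. Given a compact flat Riemannian torus $T$, write $T = \Lambda\backslash V$ where $V$ is the universal cover with its flat metric and $\Lambda$ is the deck lattice; the metric is a positive definite quadratic form $Q$ on $V$. The quadratic form gives an isomorphism $Q\colon V \to V^*$, and I would use it to transport the dual lattice $\Lambda^*\subset V^*$ back to a lattice $L := Q^{-1}(\Lambda^*)\subset V$. Declaring $L$ to be the integral structure makes $V$ a $\Z$-affine space, so $X := \Lambda\backslash V$ becomes a tropical torus; by Proposition \ref{symmetric-ploar} the form $Q$ (now viewed with respect to the lattice $L$) is the Chern class of a tropical line bundle, and by construction $c(L)$ maps $\Lambda$ isomorphically onto $L^* = \Lambda^{**}$, i.e.\ the polarization is principal. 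Thus we obtain a principally polarized tropical abelian variety, and I would verify that isometric flat tori produce isomorphic principally polarized tropical abelian varieties.

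Finally I would check that the two constructions are mutually inverse. Starting from $(X,L)$ with form $Q(L)$, passing to the flat torus and back reconstructs the same real torus with the same metric, and the integral structure recovered is $Q(L)^{-1}$ applied to the dual of $\Lambda$, which is precisely the original lattice defining the $\Z$-affine structure, because principality of the original polarization says $c(L) = Q(L)$ carries $\Lambda$ onto $(\Z^n)^* = (\text{original lattice})^*$. The naturality (functoriality under isomorphisms) follows since every step — forming $V^*$, the isomorphism $Q\colon V\to V^*$, transporting lattices — is canonical.

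The main obstacle, and the step deserving the most care, is the bookkeeping of \emph{which} lattice plays which role: a tropical torus carries two lattices (the ``$\Lambda$'' defining the torus and the ``$\Z^n$'' defining the $\Z$-affine structure), whereas a flat torus carries one lattice and one quadratic form. Matching these correctly — and in particular seeing that the principality condition on the polarization is exactly what makes the round trip identity hold — is where one must be precise; once the dictionary $(\Lambda, \Z^n, \text{identity form}) \leftrightarrow (\Lambda, Q)$ with $\Z^n = Q^{-1}(\Lambda^*)$ is set up correctly, the rest is a routine verification using Propositions \ref{symmetric-ploar} and \ref{polarized}.
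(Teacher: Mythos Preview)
Your proposal is correct and follows essentially the same approach as the paper: both directions are constructed exactly as in the paper's proof (forget the $\Z$-affine structure and keep $Q$ as a flat metric going one way; reconstruct the integral lattice as $Q^{-1}(\Lambda^*)$, equivalently the dual of $Q(\Lambda)$, going the other), and your emphasis on the principality condition being precisely what makes the round trip the identity matches the paper's observation that the lattice $\Z^n$ is uniquely determined by the requirement that $Q$ carry $\Lambda$ isomorphically onto $(\Z^n)^*$. One small remark: the reference to Proposition~\ref{polarized} at the end is not actually needed, since that proposition concerns the special case of the Jacobian of a tropical curve rather than the general correspondence; Proposition~\ref{symmetric-ploar} alone suffices.
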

\begin{proof} For every  principally polarized tropical abelian variety $(\Lambda\backslash \R^n, Q)$,
the positive definite quadratic form $Q$ defines a flat Riemannian metric on $\R^n$
and hence also on the compact torus $\Lambda\backslash \R^n$. 
Though the  integral structure $\Z^n$ (or $\Z$-affine
structure) on $\R^n$ and $\Lambda\backslash \R^n$ does not show
up explicitly in the compact flat torus $\Lambda\backslash \R^n$,
it is uniquely determined by the condition that the 
positive definite quadratic form  $Q$ 
has to map $\Lambda$ isomorphically to $(\Z^n)^*$, $\pi_Q: \Lambda\to (\Z^n)^*$.
In other words, the lattice $\Z^n$ in $\R^n$ is uniquely determined by the pair $(Q, \Lambda)$.

Conversely, for any flat compact Riemannian torus $\Lambda\backslash \R^n$, 
we claim that
there is a {\em canonical $\Z$-affine structure}
on $\R^n$ such that $\Lambda\backslash \R^n$ becomes a principally polarized tropical abelian variety.
Let $Q$ be the quadratic form on $\R^n$
which defines the flat metric of $\Lambda\backslash \R^n$.
This defines a linear isomorphism $\pi_Q: \R^n \to (\R^n)^*$ 
which maps $\Lambda$ to a lattice $L^*$ in $(\R^n)^*$, 
which induces a dual lattice in $L=(L^*)^*$
 in $\R^n$.
The lattice $L$ defines a $\Z$-affine structure on $\R^n$
and hence a $\Z$-affine structure on 
$\Lambda\backslash \R^n$, 
turning $\Lambda\backslash \R^n$ into a 
tropical torus.
It can be checked directly that the symmetric form $Q$ is integral 
in the sense of Proposition \ref{symmetric-ploar}
 and hence defines a polarization on the tropical torus
$\Lambda\backslash \R^n$, and it is also a principal polarization
by the choice of the $\Z$-affine structure. 
\end{proof}
% One way to remember the $\Z$-affine structure is that the lattice is the dual
%of the lattice obtained from $\Lambda$ via the quadratic form.
%We know what is required for the lattice on $(\R^n)^*$, and we just dualize it to get
%the integral structure on $\R^n$.

To determine the moduli space $\mathcal A_n^{trop}$, we need to identify the space
$\mathcal {FT}_n$.

\begin{prop}
The moduli space $\mathcal {FT}_n$ of compact flat Riemannian
 tori of dimension $n$ can be identified with the locally symmetric space
$\mathrm{GL}(n, \Z)\backslash \mathrm{GL}(n, \R)/\mathrm{O}(n)$. 
\end{prop}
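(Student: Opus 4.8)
The plan is to deduce the statement from Proposition~\ref{iden}, which already identifies $X=\mathrm{GL}(n,\R)/\mathrm{O}(n)$ with the set of \emph{marked} compact flat tori of dimension $n$, by analysing carefully the effect of forgetting the marking. Recall that a marked flat torus is a compact flat Riemannian manifold $M^n$ diffeomorphic to the torus together with a diffeomorphism $\varphi\colon M^n\to \Z^n\backslash\R^n$ defined up to homotopy, and that under Proposition~\ref{iden} such a pair corresponds to the unique positive definite quadratic form $Q$ on $\R^n$ for which $\varphi$ pulls the $Q$-flat metric on $\Z^n\backslash\R^n$ back to the given metric on $M^n$. The group $\mathrm{GL}(n,\Z)=\mathrm{Aut}(\Z^n)$ acts on the set of marked flat tori by changing the marking, sending $(M^n,\varphi)$ to $(M^n,\bar g\circ\varphi)$, where $\bar g$ is the affine self-diffeomorphism of $\Z^n\backslash\R^n$ induced by $g$; unwinding the identification of Proposition~\ref{iden}, this is exactly the change-of-basis action of $\mathrm{GL}(n,\Z)$ on the space of positive definite quadratic forms, i.e.\ the standard $\mathrm{GL}(n,\Z)$-action on $X$. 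Thus it suffices to prove that two marked flat tori have isometric underlying Riemannian manifolds if and only if they lie in the same $\mathrm{GL}(n,\Z)$-orbit; this identifies $\mathcal{FT}_n$ with $\mathrm{GL}(n,\Z)\backslash X=\mathrm{GL}(n,\Z)\backslash\mathrm{GL}(n,\R)/\mathrm{O}(n)$.

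One direction of the equivalence is trivial: marked tori in the same $\mathrm{GL}(n,\Z)$-orbit differ only by the marking and hence are literally the same Riemannian manifold. Conversely, I would start from an isometry $f\colon M\to M'$ between the underlying tori of two marked flat tori $(M,\varphi)$ and $(M',\varphi')$; then $\psi:=\varphi'\circ f\circ\varphi^{-1}$ is an isometry between two flat metrics on $\Z^n\backslash\R^n$, which I lift to an isometry $\widetilde\psi$ of the universal cover $\R^n$. Using the standard rigidity fact that an isometry between flat Euclidean structures on $\R^n$ is affine (geodesics go to straight lines), $\widetilde\psi$ has the form $x\mapsto Ax+b$ with $A$ linear. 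The lift $\widetilde\psi$ conjugates the deck group $\Z^n$ of the source to the deck group $\Z^n$ of the target, so $A$ normalizes $\Z^n$ and induces an element $g\in\mathrm{Aut}(\Z^n)=\mathrm{GL}(n,\Z)$; tracing through the definitions, $g$ carries the quadratic form attached to $(M,\varphi)$ to the one attached to $(M',\varphi')$, so the two marked tori lie in one $\mathrm{GL}(n,\Z)$-orbit. Together with the obvious surjectivity --- every positive definite $Q$ arises, from $\Z^n\backslash\R^n$ with the $Q$-metric and the identity marking --- this completes the identification. (Here I am not normalizing the total volume, which is why $\mathrm{GL}$ rather than $\mathrm{SL}$ appears, and I allow orientation-reversing lattice automorphisms, which is why $\mathrm{GL}(n,\Z)$ rather than $\mathrm{SL}(n,\Z)$ appears; the proposition concerns isometry classes of flat tori with no choice of orientation.)

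The only point that is not pure bookkeeping is the rigidity input: that every isometry between compact flat tori lifts to an affine (hence orthogonal-up-to-translation) isometry of $\R^n$, and that the resulting automorphism of the lattice is precisely the $\mathrm{GL}(n,\Z)$-element relating the two period forms. This is where it matters that ``torus'' means a flat manifold genuinely diffeomorphic to $T^n$, so that the holonomy is trivial and the universal cover is honestly flat $\R^n$; I would invoke this either directly (geodesics in a flat space are straight lines, and an isometry preserves them, forcing affinity) or by citing Bieberbach's theorems on flat manifolds. Everything else --- the identification of the marking action with the $\mathrm{GL}(n,\Z)$-action on $X$, and surjectivity --- follows by unwinding Proposition~\ref{iden}.
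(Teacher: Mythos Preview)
Your argument is correct and follows essentially the same route as the paper: both identify isometry classes of flat $n$-tori with $\mathrm{GL}(n,\Z)$-equivalence classes of positive definite quadratic forms on $\R^n$. The paper phrases the normalization step algebraically (write a flat torus as $(\Lambda\backslash\R^n,Q_0)$ and then as $(\Z^n\backslash\R^n,Q)$ via $Q=A^TA$, where $A$ has columns a basis of $\Lambda$), whereas you phrase it via the marked-torus language of Proposition~\ref{iden} and then quotient by marking changes; you also spell out the ``isometric $\Leftrightarrow$ $\mathrm{GL}(n,\Z)$-equivalent'' step (lifting an isometry to an affine map of $\R^n$) that the paper simply asserts. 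These are the same proof in slightly different clothing.
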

\begin{proof}
Let $Q_0$ be the standard positive quadratic form on $\R^n$
which induces the standard inner product $\langle \cdot, \cdot \rangle_0$ on $\R^n$.
Then every compact flat torus is isometric to $(\Lambda\backslash \R^n, Q_0)$
for some lattices.
A basic point of the geometry of numbers is that every flat torus
$(\Lambda\backslash \R^n, Q_0)$ is isometric to $(\Z^n\backslash \R^n, Q)$ for some 
positive definite quadratic form. In fact, if $A$ is the matrix formed by the column vectors
which represent a basis of $\Lambda$, then $Q=A^T A$.
It is clear that two flat tori $(\Z^n\backslash \R^n, Q_1)$ and $(\Z^n\backslash \R^n, Q_2)$
are isometric if and only if their quadratic forms $Q_1$ and $Q_2$ are equivalent under 
$\mathrm{GL}(n, \Z)$.
Since the space of equivalent classes of positive quadratic forms on $\R^n$
is equal to $\mathrm{GL}(n, \Z)\backslash \mathrm{GL}(n, \R)/\mathrm{O}(n)$, 
the proposition is proved. 
\end{proof}

The combination of the above two propositions gives the following corollary, which
was stated as \cite[Assumption  2, p. 165]{cvi} (see also \cite[\S 6.4]{mikz}).

\begin{cor}\label{moduli-abelian}
The moduli space $\mathcal A_n^{trop}$ of principally polarized tropical abelian varieties
of dimension $n$ is equal to the locally symmetric space $\mathrm{GL}(n, \Z)\backslash \mathrm{GL}(n, \R)/\mathrm{O}(n)$:
$$\mathcal A_n^{trop} \cong \mathrm{GL}(n, \Z)\backslash \mathrm{GL}(n, \R)/\mathrm{O}(n),$$
and $\dim \mathcal A_n^{trop} =\frac{n(n+1)}{2}.$
\end{cor}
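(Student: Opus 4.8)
The statement is a formal consequence of the two propositions immediately above, so the plan is to compose the bijections they provide and then carry out the dimension count. First I would invoke Proposition \ref{tropical-flat}, which gives a natural bijection between $\mathcal A_n^{trop}$ and the space $\mathcal{FT}_n$ of compact flat Riemannian tori of dimension $n$: a principally polarized tropical abelian variety $(\Lambda\backslash\R^n,Q)$ is sent to the flat torus $\Lambda\backslash\R^n$ equipped with the metric determined by $Q$, and conversely a flat torus determines its $\Z$-affine structure uniquely through the requirement that $Q$ carry $\Lambda$ isomorphically onto $(\Z^n)^*$. Then I would apply the preceding proposition $\mathcal{FT}_n\cong \mathrm{GL}(n,\Z)\backslash\mathrm{GL}(n,\R)/\mathrm{O}(n)$, under which a flat torus $\Lambda\backslash\R^n$ with the standard metric corresponds to the class of $A^{T}A$, where $A$ has as columns a basis of $\Lambda$ (well-defined modulo change of basis, i.e.\ modulo the $\mathrm{GL}(n,\Z)$-action). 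Composing the two identifications yields $\mathcal A_n^{trop}\cong \mathrm{GL}(n,\Z)\backslash\mathrm{GL}(n,\R)/\mathrm{O}(n)$.

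For the dimension, I would observe that $\mathrm{GL}(n,\R)/\mathrm{O}(n)$ is the space of positive definite symmetric $n\times n$ matrices, an open subset of the $\tfrac{n(n+1)}{2}$-dimensional vector space of symmetric matrices; equivalently $\dim\mathrm{GL}(n,\R)-\dim\mathrm{O}(n)=n^{2}-\tfrac{n(n-1)}{2}=\tfrac{n(n+1)}{2}$. Since $\mathrm{GL}(n,\Z)$ is a discrete group, passing to the quotient does not change the dimension, so $\dim\mathcal A_n^{trop}=\tfrac{n(n+1)}{2}$.

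There is essentially no obstacle here, as both constituent statements are already established; the only point deserving a word of care is that each of the two correspondences is compatible with the natural topologies on the moduli spaces (and, if one wishes, with the orbifold structure coming from torus automorphisms), not merely a set-theoretic bijection. For Proposition \ref{tropical-flat} this holds because the $\Z$-affine structure is recovered from the pair $(Q,\Lambda)$ by a continuous---indeed algebraic---procedure, and for $\mathcal{FT}_n\cong\mathrm{GL}(n,\Z)\backslash\mathrm{GL}(n,\R)/\mathrm{O}(n)$ it is the classical fact from the reduction theory of positive definite quadratic forms. Granting these, the corollary follows at once.
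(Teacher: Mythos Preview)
Your proposal is correct and follows exactly the paper's approach: the corollary is obtained by composing the bijections of Proposition~\ref{tropical-flat} and the subsequent proposition identifying $\mathcal{FT}_n$ with $\mathrm{GL}(n,\Z)\backslash\mathrm{GL}(n,\R)/\mathrm{O}(n)$, and the paper states it as an immediate combination of these two results without further argument. Your added dimension computation and remark on compatibility with topologies are fine elaborations but go slightly beyond what the paper records.
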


\begin{rem}\label{trop-moduli}
{\em
Another assumption made in \cite[Assumption 1, p. 164]{cvi} concerns the existence
as a tropical variety  
of the moduli space $\M_n^{trop}$ of compact smooth tropical curves of genus $n$.
More discussions are given in \cite{ca}, where $\M_n^{trop}$ was constructed as a
topological space and its connection with the outer space $X_n$ was mentioned \cite[\S 5]{ca}.
A more concrete outline was given in \cite[\S 3.1]{mik2}.
Certainly, the construction of $\M_n^{trop}$ as a tropical 
variety is an important problem in tropical geometry.
We provide some details and references
to show that outer spaces of metric graphs can be important for this
purpose. This might also contribute to the analogy with Teichm\"uller spaces of Riemann surfaces
(Teichm\"uller spaces were introduced in order to study the moduli spaces of Riemann
surfaces, for example, to endow the latter with a complex structure)
and shows that outer space theory can pay back to the tropical geometry in contrast
to the results discussed so far on applications of tropical geometry to metric graphs.

As it is known, the moduli space of Riemann surfaces $\M_g$ is an algebraic variety
but not a complex manifold, on the other hand, the Teichm\"uller space $\T_g$ is a complex manifold but not an
algebraic variety. 
Let $\hat{X}_n$ be the {\em unreduced} outer space of {\em unnormalized}
marked metric graphs $(\ga, \ell, h)$
of genus $n$, where by the unreduced outer space, we mean that the graphs
 can contain separating edges, and by unnormalized, we mean 
that the total sum of edge lengths, $\sum_e \ell(e)$,
is not required to be 1. 
For each marked graph $(\ga, h)$, there is a 
simplicial polyhedral cone $\hat{\Sigma}_{(G, h)}\subset
\R^k_{\geq0}$.
When $G$ does not contain any separating edge,
 the homothety section of the simplicial polyhedral cone $\hat{\Sigma}_{(G, h)}$
 is the simplex $\Sigma_{(G,h)}$ in the outer space 
$X_n$ defined above in \S 2.  

It is clear that $\hat{X}_n$ is a simplicial polyhedral complex and $\mathrm{Out}(F_n)$ acts 
on $\hat{X}_n$ by simplicial maps which are also $\Z$-affine maps, 
and the quotient $\mathrm{Out}(F_n)\backslash \hat{X}_n$
by $\mathrm{Out}(F_n)$ is equal to the moduli space $\M_n^{trop}$.

The assertion is that {\em $\hat{X}_n$ is a tropical space}, or rather is a {\em ``tropical manifold" (a space
which locally has the structure of a tropical variety)},  but not
a tropical variety,
and  that $\M_n^{trop}$ is a {\em tropical orbifold.}

According to the definition of tropical varieties \cite{mik2}, locally,  $\hat{X}_n$ should be embedded into
$\R^N$ as a polyhedral complex  with integer slope which satisfies the balancing condition
at every polyhedral face of co-dimension 1. 
It is clear that each polyhedral cone of $\hat{X}_n$ has integer slope, but 
for every polyhedral cone $F$ of positive codimension, it is not obvious how to embed
all polyhedral cones of $\hat{X}_n$ containing the face $F$ into some $\R^N$
 as a polyhedral complex with integer slopes
satisfying the balancing condition.
The idea \cite[\S 3.1]{mik2} is to note that a marked metric graph
$(G, \ell, h)$ in the interior of $F$ has vertices $v$ of valence at least 4.
To move to points of the polyhedral cones which contain $F$ as a face
(i.e., to move to points in a neighborhood of $(G, \ell, h)$ in $\hat{X}_n$),
we need to expand such vertices $v$ into trees with marked boundary points,
and to vary the lengths of edges of $G$ as well. 
Now trees with marked boundary points correspond to marked rational tropical curves,
and their moduli space $\M_{0, k}$ turns out to be a tropical variety  
\cite[Theorem 3.7]{gkm} \cite{mik4}. Since varying the edge lengths of $G$ clearly
traces out a neighborhood of a tropical variety, and since the product of tropical
varieties is a tropical variety, 
this implies that a neighborhood of the marked metric graph $(G, \ell, h)$ 
in $\hat{X}_n$ can be embedded into some $\R^N$ as a polyhedral
complex with integer slope satisfying the balancing condition at every face
of codimension 1. Therefore, $\hat{X}_n$ is a ``tropical manifold". 
}
\end{rem}

\section{The Torelli Theorem for tropical curves}

Let $\M_n^t$ be the moduli space of compact smooth tropical curves of genus $n$.
For every tropical curve $J\in \M_n^t$, by Proposition \ref{polarized},
its tropical Jacobian variety
$(H_1(J, \Z)\backslash \Omega(J)^*, Q)$ is a principally polarized
tropical abelian variety.
By Corollary \ref{moduli-abelian}, we obtain 
the tropical Jacobian map:
\begin{equation}\label{tropical-jacobian}
\Pi^{trop}: \M_n^t \to \mathcal A_n^{trop}=\mathrm{GL}(n, \Z)\backslash 
\mathrm{GL}(n, \R)/\mathrm{O}(n), \quad J\mapsto
(H_1(J, \Z)\backslash \Omega(J)^*, Q).
\end{equation}

By Proposition \ref{lattice-embed}, 
$(H_1(J, \Z)\backslash \Omega(J)^*, Q)
 \cong (H_1(J, \Z)\backslash H_1(J, \R), Q)$,
and we also denote the Jacobian of $J$ 
by $(H_1(J, \Z)\backslash H_1(J, \R), Q)$.

\begin{rem}
{\em To understand better the analogy with the Jacobian (or period) map of Riemann
surfaces, we can interpret the quadratic form $Q$ of a  compact tropical curve $J$
 as the period of some
normalized 1-forms on $J$. Choose a basis  $\sigma_1, \cdots, \sigma_g$
of $H_1(\ga, \Z)\cong \Z^g$.
For each 1-cycle $\sigma_i$, let $\omega_i$ be the
corresponding 1-form as in Remark \ref{cycle-form-1}. 
Then the period of $\omega_j$ on the cycle $\sigma_i$
is $\int_{\sigma_i}\omega_j=Q(\sigma_i, \sigma_j)$,
i.e., the matrix of the quadratic form $Q$ with respect to the basis $\sigma_1, 
\cdots, \sigma_g$ is the period matrix of the forms $\omega_1, \cdots, \omega_g$.
}
\end{rem}

As mentioned in \S 4, the Jacobian map $\Pi: \M_g \to \mathcal A_g$
for compact Riemann surfaces is an embedding, 
but the tropical Jacobian map  $\Pi^{trop}$ is not injective.  
This is reasonable since the Jacobian variety of a tropical
curve $J$ only depends on the flat metric $Q$ of the torus $H_1(J, \Z)\backslash H_1(J, \R)$,
which depends only on the lengths of 1-cycles in $J$, but as a metric graph $(J, \ell)$,
$J$ depends on the lengths $\ell(e)$ of all edges in $J$. So some information is lost
in passing to the Jacobian variety $(H_1(J, \Z)\backslash H_1(J, \R), Q)$. More precise
statements are contained in Proposition \ref{3-edge}  below.

Now we summarize several results from \cite{cvi} on the exact extent of  failure of the tropical
Jacobian map $\Pi^{trop}$. 
%In the following, we assume that graphs are connected and do not contain separating edges. 

\begin{defi}
Two metric graphs $(\ga, \ell)$, $\ga', \ell')$ are called cyclically equivalent
%denoted by $(\ga, \ell)\cong_{cyc}(\ga', \ell')$,
if there exists a bijection between their edges $\varepsilon: E(\ga) \to E(\ga')$
such that $\ell(e)=\ell'(\varepsilon(e))$, and $\varepsilon$
induces a bijection between the
cycles of $\ga$ and the cycles of $\ga'$. (A cycle is a subgraph that is
connected and homotopy equivalent to a
circle, but does not contain separating edges.) 
\end{defi}

\begin{defi} For $k\geq 2$, 
a graph $\ga$ having at least 2 vertices is said to be $k$-edge connected if the
graph obtained from $\ga$ by removing  any $k-1$ edges is connected. 
\end{defi}

For example, a graph is 2-edge connected if and only if it is connected and
does not contain any separating edge.

To obtain highly edge-connected graphs, we introduce two types of edge contractions:
\begin{description}
\item (A) contract a separating edge,
\item (B) contract one of two edges which form a separating pair of edges $\{e_1, e_2\}$,
i.e., $\ga-\{ e_1\},  \ga-\{e_2\}$ are connected, but $\ga-\{e_1, e_2\}$ is disconnected.
\end{description}

\begin{defi}
The 2-edge connectivization of a connected graph $\ga$ is the 2-edge connected graph
$\ga^2$ which is obtained from $\ga$ by iterating operation (A), i.e., contracting separating edges.
A 3-edge connectivization of a connected graph $\ga$ is a 3-edge connected graph
$\ga^3$ which is obtained from the 2-edge connectivization $\ga^2$ of $\ga$ 
by iterating operation (B). 
\end{defi}

Note that there is no unique 3-edge connectivization of a connected graph $\ga$ in general,
but all 3-edge connectivizations are cyclically equivalent \cite[Lemma 2.3.8]{cvi}, and they retain
some information of the original graph.

For this purpose, we need the notion of $C1$-sets of $\ga$
which corresponds to edges of $\ga^3$. 
For every subset $S\subset E(\ga)$ of edges of $\ga$, define a subgraph $\ga -S$ which is obtained
by removing the edges in $S$ but leaving the vertices unchanged.
A new, complementary graph $\ga(S)$ is obtained from $\ga$ by contracting every
connected component of $\ga-S$ to a single point. (Different connected components are
contracted to different points.)

\begin{defi}[{\cite[Definition 2.3.1]{cvi}}]\label{C1-set}
Let $\ga$ be a connected graph that does not contain any separating edge.
A subset $S\subset E(\ga)$ is called a C1-set of $\ga$ if $\ga(S)$ is a cycle
and $\ga-S$ does not contain any separating edge. Denote the collection
of C1-sets of $\ga$ by
$\mathrm{Set}^1 (\ga)$. 
\end{defi}

We discuss some examples of C1-sets. If $S=\{e\}$ and $\ga-\{e\}$ does not contain
any separating edge, then $S$ is a C1-set.
For example, suppose that $\ga$ has two vertices connected by more than 3 edges.
If $S$ contains more than one edges, then $S$ is not a C1-set since $\ga(S)$ 
is homotopy equivalent to a wedge of more than one circles. 
In this case, every edge $E$ is a C1-set.
If $\ga$ contains two vertices and two edges, then each edge $E$ is not a C1-set
since $\ga-S$ contains a separating edge, which is, in fact, the only edge of the graph $\ga-S$.
More generally, if $\ga$ is homeomorphic to a circle (i.e., all vertices are of valence 2),
then the only C1-set $S$ is the set of all edges due to the second condition that $\ga-S$
does not contain any separating edge. 

For a general graph $\ga$, if $S=\{e_1, e_2\}$ is a separating pair of edges, 
then $S$ is a C1-set if and only if $\ga-S$ does not contain any separating edge. 
The following general fact is true.

\begin{lem}[{\cite[Corollary 2.3.4]{cvi}}]
A graph $\ga$ is a 3-edge connected graph if and only if it is connected and
there is a bijection between $E(\ga)$ and $\mathrm{Set}^1(\ga)$, i.e.,
every edge is a C1-set and every C1-set consists of exactly one edge.  
\end{lem}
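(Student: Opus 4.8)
The plan is to prove both implications by unwinding the definition of a C1-set (Definition \ref{C1-set}) together with the definition of $k$-edge connectivity. The forward direction — a 3-edge connected graph $\ga$ has each edge forming a C1-set and each C1-set being a single edge — I would argue as follows. Assume $\ga$ is 3-edge connected. For any single edge $e$, the graph $\ga-\{e\}$ is obtained by deleting one edge, hence is still connected (since $\ga$ is even 2-edge connected), and in fact remains 2-edge connected because deleting one more edge would remove a total of two edges from $\ga$, which cannot disconnect a 3-edge connected graph; so $\ga-\{e\}$ has no separating edge. It also needs to be checked that $\ga(\{e\})$ is a cycle: contracting every connected component of $\ga-\{e\}$ to a point — but $\ga-\{e\}$ is connected, so there is a single component, and $\ga(\{e\})$ is a single vertex with the edge $e$ attached, i.e.\ a one-edge loop, which counts as a cycle. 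Hence $\{e\}\in\mathrm{Set}^1(\ga)$. Conversely I must show every C1-set $S$ has $|S|=1$: if $|S|\geq 2$, then $\ga(S)$ being a cycle with $|S|$ edges means $\ga-S$ has exactly $|S|\geq 2$ connected components (one contracted vertex per cycle-vertex), so removing the $|S|$ edges of $S$ disconnects $\ga$; but $|S|\geq 2$ edges suffice to disconnect, contradicting that $\ga$ is 3-edge connected unless $|S|\geq 3$ — and here I need the sharper observation that a C1-set $S$ with $\ga-S$ having $c$ components requires only $c-1$ of the edges in $S$ to already disconnect, contradicting 3-edge connectivity once $|S|\geq 2$ (so $c\geq 2$, and $c-1\leq |S|-1$, but actually one checks $|S|=c$ for a cycle, so $c-1$ deletions disconnect and $c-1\leq 1$ forces $c=2,|S|=2$, still a contradiction since 2 deletions cannot disconnect). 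So every C1-set is a singleton, giving the bijection $E(\ga)\leftrightarrow\mathrm{Set}^1(\ga)$.

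For the reverse direction, suppose $\ga$ is connected and the map $e\mapsto\{e\}$ is a bijection $E(\ga)\to\mathrm{Set}^1(\ga)$ — i.e.\ every singleton $\{e\}$ is a C1-set and no C1-set has more than one element. I want to conclude $\ga$ is 3-edge connected, meaning $\ga$ has at least two vertices and deleting any two edges leaves it connected. First, $\ga$ has no separating edge: if $e$ were separating, then $\{e\}$ would fail the condition "$\ga-S$ has no separating edge" vacuously only if $\ga-\{e\}$ itself had a separating edge, so that is not immediate; instead, I use that $\{e\}$ being a C1-set requires $\ga(\{e\})$ to be a cycle, which forces $\ga-\{e\}$ to be connected — precisely the statement that $e$ is not separating. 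Hence $\ga$ is 2-edge connected. Now suppose for contradiction that some pair $\{e_1,e_2\}$ disconnects $\ga$ into exactly two components (exactly two, since $\ga$ is 2-edge connected so neither edge alone disconnects). Then $\{e_1,e_2\}$ is a separating pair, $\ga(\{e_1,e_2\})$ is a graph on two vertices joined by the two edges $e_1,e_2$ — a cycle of length 2 — and provided $\ga-\{e_1,e_2\}$ has no separating edge, $\{e_1,e_2\}$ would be a C1-set of size 2, contradicting the hypothesis. The remaining case is that $\ga-\{e_1,e_2\}$ has a separating edge; here I would iterate, or invoke the structure of $\ga^3$: keep contracting to find a genuine separating pair whose complement is highly connected, contradicting that all C1-sets are singletons. (Alternatively, cite \cite[Corollary 2.3.4]{cvi}, which is exactly this lemma, and present the proof only in the direction not already covered there.)

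The main obstacle I anticipate is the bookkeeping in the reverse direction when $\ga-\{e_1,e_2\}$ does contain a separating edge, since then the naive pair $\{e_1,e_2\}$ is not itself a C1-set and one must produce a different C1-set of size $\geq 2$ to derive the contradiction. The clean way around this is to use the fact (from the discussion preceding Definition \ref{C1-set}) that any 3-edge connectivization $\ga^3$ is obtained by iterating operations (A) and (B), and that C1-sets of $\ga$ correspond to edges of $\ga^3$; if $\ga$ is not already 3-edge connected, then some operation (B) is applicable, which exhibits a separating pair $\{e_1,e_2\}$ after possibly contracting separating edges — but $\ga$ has no separating edge, so no (A) step is needed, and the (B) step directly produces a C1-set (the pair, or the edge-class absorbing it) of size at least 2 in $\mathrm{Set}^1(\ga)$, the contradiction. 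So the argument reduces to: $\ga$ 3-edge connected $\iff$ no operation (A) or (B) applies $\iff$ every C1-set is a singleton, which is essentially the content of the cited \cite[Lemma 2.3.8, Corollary 2.3.4]{cvi}. I would present the forward direction in full as above and compress the reverse direction to this observation, citing \cite{cvi} for the combinatorial structure of 3-edge connectivizations.
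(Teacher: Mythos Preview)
The paper does not actually prove this lemma; it is stated with a citation to \cite[Corollary 2.3.4]{cvi} and no argument is given. So there is no ``paper's own proof'' to compare against, and your proposal stands or falls on its own.

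Your forward direction is essentially right, but the treatment of the case $|S|\geq 2$ is muddled. The sentence ``$c-1$ deletions disconnect and $c-1\leq 1$ forces $c=2$'' does not handle $|S|\geq 3$, and the logic is tangled. The clean argument is: if $S$ is a C1-set with $|S|\geq 2$, then $\ga(S)$ is a cycle of length $|S|\geq 2$; pick any two edges $e_1,e_2\in S$ and remove them from $\ga(S)$ --- the cycle falls into two arcs, so $\ga-\{e_1,e_2\}$ is disconnected, contradicting 3-edge connectivity. One line, no cases.

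Your reverse direction has a genuine gap, and the patch you propose is circular. You correctly reduce to: $\ga$ is 2-edge connected, and you want to rule out a separating pair $\{e_1,e_2\}$. You then split on whether $\ga-\{e_1,e_2\}$ has a separating edge, and in the bad case you invoke the structure of $\ga^3$ via \cite[Lemma 2.3.8]{cvi} --- but that structural result sits downstream of the lemma you are trying to prove, so this is circular. The fix is much simpler and you already have all the ingredients: the hypothesis ``every singleton $\{e\}$ is a C1-set'' says precisely that for every edge $e$, the graph $\ga-\{e\}$ is connected (from $\ga(\{e\})$ being a cycle on one vertex) \emph{and} has no separating edge (the second condition in Definition \ref{C1-set}). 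In other words, $\ga-\{e\}$ is 2-edge connected for every edge $e$. Now for any two edges $e_1,e_2$, the graph $\ga-\{e_1,e_2\}=(\ga-\{e_1\})-\{e_2\}$ is connected because $\ga-\{e_1\}$ is 2-edge connected. That is 3-edge connectivity. Note that this uses only the hypothesis ``every edge is a C1-set''; the other half of the bijection (``every C1-set is a singleton'') is not needed for this implication.
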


\begin{prop}[{\cite[Lemma 2.3.8]{cvi}}]
Let $\ga$ be a graph, and $\ga^3$ be a 3-edge connectivization of $\ga$. Then the following
holds:
\begin{enumerate}
 \item The genus of $\ga^3$ is equal to the genus of $\ga$.
\item There is a canonical bijection between the following three sets:
$\mathrm{Set}^1(\ga^3)$, $E(\ga^3)$,
and $\mathrm{Set}^1(\ga)$. 
\item All 3-edge connectivizations of $\ga$ are cyclically equivalent.
\end{enumerate}
\end{prop}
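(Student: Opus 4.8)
The plan is to treat the three assertions in turn, reserving most of the work for part~(3). For part~(1), write the genus of a connected graph as $b_1 = |E| - |V| + 1$ and observe that each of the moves (A) and (B) contracts a single non-loop edge, so $|E|$ and $|V|$ both drop by one and $b_1$ is unchanged; iterating gives $g(\ga^3) = g(\ga)$. That the contracted edge is not a loop is clear for (A) (a loop is never separating), and for (B) it follows because in a graph with no separating edge — in particular in $\ga^2$ and in every graph obtained from it by further (B)-moves — a separating pair $\{e_1,e_2\}$ cannot contain a loop, since removing a loop never disconnects a graph whereas $\ga - \{e_1,e_2\}$ is required to be disconnected.

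For part~(2), recall that a $C1$-set is only defined for a graph without separating edges, so one first reduces to the case $\ga = \ga^2$ (the bridges of $\ga$ carry no cycles and lie in no $C1$-set); then only the (B)-moves remain. By the characterization of $3$-edge connected graphs recalled above (\cite[Corollary 2.3.4]{cvi}), $\ga^3$ already comes with a canonical bijection $E(\ga^3) \to \mathrm{Set}^1(\ga^3)$, $e \mapsto \{e\}$, so it suffices to produce a canonical bijection $\mathrm{Set}^1(\ga^2) \to \mathrm{Set}^1(\ga^3)$, which I would build up one (B)-move at a time. If $\{e_1,e_2\}$ is a separating pair and $H = \ga^2/e_1$, then: (i) every cycle of $\ga^2$ through $e_1$ also passes through $e_2$, and one checks that a $C1$-set contains $e_1$ iff it contains $e_2$ — otherwise the surviving edge of the pair becomes the unique edge joining the two sides of $\ga^2-\{e_1,e_2\}$ in the complement, hence a separating edge, contradicting the $C1$-condition; (ii) contracting $e_1$ leaves the graph free of separating edges, since every remaining edge was a non-bridge and contracting a non-bridge cannot create a bridge; and (iii) the assignment $S \mapsto S$ if $e_1 \notin S$, and $S \mapsto S \setminus\{e_1\}$ if $e_1 \in S$, is a bijection $\mathrm{Set}^1(\ga^2) \to \mathrm{Set}^1(H)$, because it does not alter the quotient graph $\ga^2(S)$ and respects the no-separating-edge condition on the complement. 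Composing along the chain yields $\mathrm{Set}^1(\ga) \cong \mathrm{Set}^1(\ga^3) \cong E(\ga^3)$; since a $C1$-set $S$ of $\ga$ ends up at the single edge of $\ga^3$ that survives from it — a description making no reference to the chain of contractions — this bijection is canonical.

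For part~(3), which I expect to be the main obstacle, the subtlety lies entirely in the (B)-moves: the $2$-edge-connectivization $\ga^2$ is literally the quotient of $\ga$ that contracts all separating edges at once and hence is order-independent, whereas different choices of separating pair, of which edge of the pair to contract, and of the order of contractions, a priori produce different metric graphs. My first approach would be to bypass the process: use part~(2) to identify $E(\ga^3)$ with $\mathrm{Set}^1(\ga)$ and read the vertices and incidences of $\ga^3$ off the way the $C1$-sets sit inside $\ga$, obtaining a description of $\ga^3$ that depends only on $\ga$ and a choice of representative edge for each pair. Since that still leaves the edge-lengths dependent on the representatives, I would then fall back on a confluence argument (Newman's lemma): show that any two one-step (B)-reductions $\ga^2 \to H_1$ and $\ga^2 \to H_2$ admit a common further reduction, together with a cyclic equivalence matching the two routes, and let local confluence plus termination propagate this to all maximal reduction sequences. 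The delicate bookkeeping is in tracking edges, cycles, and especially edge-lengths across a (B)-move — the two edges of a separating pair are interchangeable in every cycle, so the induced edge-correspondence preserves the set of cycles, but one must be careful about the length attached to the contracted edge, following the convention of \cite{cvi}, in order to obtain a genuinely length-preserving edge bijection. I expect this metric bookkeeping to be the most delicate point of the whole argument.
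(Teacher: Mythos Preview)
The paper does not prove this proposition at all: it is simply quoted from \cite[Lemma~2.3.8]{cvi} and used as a black box, so there is no ``paper's own proof'' to compare against. Your sketch is therefore original work relative to this paper, and for parts~(1) and~(2) it is correct and essentially what one finds in \cite{cvi}: the genus computation via $b_1 = |E|-|V|+1$ and the inductive bijection of $C1$-sets along single (B)-moves are exactly the right ideas.

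For part~(3), your confluence strategy is sound, but you are making the argument harder than it needs to be by worrying about edge lengths. The proposition as stated is about \emph{combinatorial} graphs $\ga$, not metric graphs; the metric statement (``all 3-edge connectivizations of a metric graph are cyclically equivalent as metric graphs'') is derived immediately afterwards in the paper as a formal consequence of this proposition together with the definition $\ell^3(e_S)=\sum_{e\in S}\ell(e)$. So for the present proposition you only need the combinatorial cyclic equivalence --- an edge bijection inducing a bijection on cycles --- and the ``metric bookkeeping'' you flag as the most delicate point simply does not arise here. Once part~(2) gives you a canonical bijection $E(\ga^3)\cong\mathrm{Set}^1(\ga)$ for \emph{every} 3-edge connectivization, any two such connectivizations acquire a canonical edge bijection via $\mathrm{Set}^1(\ga)$, and checking that it respects cycles is straightforward (cycles of $\ga^3$ correspond to cycles of $\ga$ under the contraction). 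This bypasses the Newman-lemma machinery entirely.
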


We can define similar notions for metric graphs.

\begin{defi}[{\cite[Definition 4.1.7]{cvi}}]
A 3-edge connectivization of a metric graph $(\ga, \ell)$ is a metric graph $(\ga^3, \ell^3)$,
where $\ga^3$ is a 3-edge connectivization of $\ga$ and $\ell^3$ is the length function
defined by
$$\ell^3(e_S)=\sum_{e\in S}\ell(e),$$
where for each C1-set $S$ of $\ga$, $e_S$ is the corresponding edge of $\ga^3$
under the bijection between $\mathrm{Set}^1(\ga)$ and $E(\ga^3)$ in the above proposition.
\end{defi}

It follows from the above definition of the metric and the above proposition that
all 3-edge connectivizations of a metric graph are cyclically equivalent as metric graphs.

The Torelli Theorem for metric graphs can now be stated.

\begin{prop}[{\cite[Theorem 4.1.10]{cvi}}]\label{3-edge}  
Two metric graphs (or equivalently  tropical curves) $\ga$ and $\ga'$ 
have isomorphic tropical Jacobian varieties
if and only if their 3-edge connectivizations $\ga^3$ and $(\ga')^3$ are cyclically
equivalent. 
\end{prop}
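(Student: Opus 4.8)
The plan is to unwind precisely what the tropical Jacobian remembers, reduce to the $3$-edge-connected case, and then recover the cyclic-equivalence class from the resulting polarized lattice. By the construction of \S 6 together with Propositions \ref{lattice-embed}, \ref{principal} and \ref{polarized}, the tropical Jacobian of $(\ga,\ell)$ is, as a principally polarized tropical abelian variety, the lattice $H_1(\ga,\Z)$ sitting inside $H_1(\ga,\R)$ together with the positive definite form $Q$, where $Q$ is the restriction to $H_1(\ga,\R)\subset C_1(\ga,\R)$ of the diagonal form $\sum_{e}\ell(e)\,x_e^2$ of Equation \ref{quad-form}. By Proposition \ref{tropical-flat} a principally polarized tropical abelian variety is the same datum as a flat Riemannian torus, so the $\Z$-affine part of the structure is already determined by $(H_1(\ga,\Z),Q)$; combining this with Proposition \ref{moduli-abelian}, two tropical Jacobians are isomorphic if and only if there is a lattice isometry between $(H_1(\ga,\Z),Q_\ga)$ and $(H_1(\ga',\Z),Q_{\ga'})$. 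So it suffices to show that such a lattice isometry exists exactly when $\ga^3$ and $(\ga')^3$ are cyclically equivalent.

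Next I would reduce to $3$-edge-connected graphs by proving $\mathrm{Jac}(\ga)\cong\mathrm{Jac}(\ga^3)$. A separating edge $e$ lies on no cycle, so the functional $x_e$ vanishes on $H_1(\ga,\R)$ and the $2$-edge-connectivization $\ga^2$ has the same Jacobian; hence I may assume $\ga$ is $2$-edge-connected, so that its edge set is partitioned into its C1-sets. If $S=\{e_1,\dots,e_k\}$ is a C1-set, then $\ga(S)$ is a $k$-gon, so every cycle of $\ga$ meets the edges of $S$ with a common coefficient up to sign; thus $x_{e_1}=\pm x_{e_i}$ on $H_1(\ga,\R)$ for all $i$, and the partial sum $\sum_{e\in S}\ell(e)\,x_e^2$ restricts to $(\sum_{e\in S}\ell(e))\,x_{e_1}^2=\ell^3(e_S)\,x_{e_S}^2$ under the canonical identification $H_1(\ga,\Z)\cong H_1(\ga^3,\Z)$ (noting that $\ga^3$ has the same genus as $\ga$). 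Summing over all C1-sets gives $Q_\ga=Q_{\ga^3}$, hence $\mathrm{Jac}(\ga)\cong\mathrm{Jac}(\ga^3)$, and likewise for $\ga'$. For a $3$-edge-connected graph the C1-sets are exactly the single edges, so a cyclic equivalence of two such graphs is precisely a length-preserving bijection $E(\ga)\to E(\ga')$ carrying cycles to cycles, i.e. a length-preserving isomorphism of their graphic matroids. The remaining task is therefore to show that such an isomorphism exists if and only if there is a lattice isometry $(H_1(\ga,\Z),Q_\ga)\cong(H_1(\ga',\Z),Q_{\ga'})$.

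One direction is immediate: a length-preserving matroid isomorphism sends cycles to cycles, hence induces a linear isomorphism $H_1(\ga,\R)\to H_1(\ga',\R)$ taking the cycle lattice to the cycle lattice and, because the lengths match, taking $Q_\ga$ to $Q_{\ga'}$. The converse is the substance of the theorem, and here I would follow \cite{cvi}: reconstruct, from the polarized lattice $(H_1(\ga,\Z),Q_\ga)$ alone, the graphic matroid of $\ga$ together with its length function. The mechanism is to extract the matroid intrinsically from the geometry of $Q_\ga$ --- for instance from the Delaunay/Voronoi decomposition of $H_1(\ga,\R)$ relative to $H_1(\ga,\Z)$ that $Q_\ga$ determines, whose combinatorial type encodes the circuits and cocircuits of the graphic matroid for a form of this special shape --- after which a Whitney-type argument recovers the $3$-edge-connected graph $\ga$ from its matroid up to cyclic equivalence; the length function then appears as the coefficients in the resulting expansion of $Q_\ga$ as a positive combination of squares of the edge functionals. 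The main obstacle is exactly this reconstruction: the abstract lattice-with-form does not display the ambient $C_1(\ga,\Z)=\Z^{E}$ nor the splitting of $Q_\ga$ into edge squares, so making the matroid intrinsic --- and verifying that the fibre of the tropical Jacobian map is precisely a cyclic-equivalence class rather than a single isomorphism class, which is why the statement cannot be improved from ``cyclically equivalent'' to ``isomorphic'' --- is the delicate point, and is the content of \cite{cvi}.
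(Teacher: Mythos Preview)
The paper does not give its own proof of this proposition: it is stated with the attribution \cite[Theorem 4.1.10]{cvi} and then immediately used to derive Corollary \ref{failure}. So there is no in-paper argument to compare against.

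Your outline is a faithful sketch of the Caporaso--Viviani strategy. The reduction of the tropical Jacobian to the polarized lattice $(H_1(\ga,\Z),Q_\ga)$ is correct and is exactly how the paper packages the Jacobian in \S 6. Your proof that $\mathrm{Jac}(\ga)\cong\mathrm{Jac}(\ga^3)$ is correct: separating edges do not meet any cycle, and for a C1-set $S$ the contraction $\ga(S)$ being a single circle forces all edge-coordinates $x_{e_i}$, $e_i\in S$, to agree up to sign on $H_1(\ga,\R)$, so the corresponding diagonal block of $Q_\ga$ collapses to $\ell^3(e_S)\,x_{e_S}^2$. The easy direction (cyclic equivalence $\Rightarrow$ lattice isometry) is indeed immediate.

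You correctly isolate the genuine content as the converse direction --- recovering the graphic matroid of a $3$-edge-connected graph from $(H_1,Q)$ alone --- and you correctly name the mechanism used in \cite{cvi}, namely the Delaunay decomposition attached to $Q_\ga$, together with a Whitney-type result that the matroid determines a $3$-edge-connected graph up to cyclic equivalence (in fact up to $2$-isomorphism). As written this step remains a pointer rather than an argument, which is appropriate here since the present paper also treats the result as a black box from \cite{cvi}.
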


\begin{cor}\label{failure}
When $n\geq 3$,  the tropical Jacobian map 
$$\Pi^{trop}: 
\M_n^t \to \mathcal A_n^{trop}=\mathrm{GL}(n, \Z)\backslash \mathrm{GL}(n, \R)/\mathrm{O}(n)$$ is not injective.
\end{cor}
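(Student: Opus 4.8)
The plan is to deduce non-injectivity from the Torelli Theorem for metric graphs, Proposition \ref{3-edge}, by exhibiting an explicit one-parameter family of pairwise non-isomorphic genus-$n$ metric graphs that all share a single $3$-edge connectivization. The point is that Proposition \ref{3-edge} turns the question into a purely combinatorial one: I just need genus-$n$ metric graphs that fail to be $3$-edge connected, together with enough freedom to move the edge lengths inside a separating pair without changing the $3$-edge connectivization.

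Concretely, for $n \geq 3$ I would take the graph $\ga$ with two vertices $a, b$, carrying $n-2$ loops $\alpha_1, \dots, \alpha_{n-2}$ at $a$, one loop $\beta$ at $b$, and two parallel edges $e_1, e_2$ joining $a$ and $b$. A quick count gives first Betti number $n$; every vertex has valence $\geq 4$ (using $n \geq 3$); and $\ga$ has no separating edge, while $\{e_1, e_2\}$ is a separating pair of edges, since $\ga - \{e_1, e_2\}$ splits into the loops at $a$ and the loop at $b$. Contracting $e_2$ (operation (B)) produces the rose $R_n$, whose edges are the loops $\alpha_i$, the loop $\beta$, and a loop coming from the C1-set $\{e_1, e_2\}$; by the metric rule $\ell^3(e_{\{e_1,e_2\}}) = \ell(e_1) + \ell(e_2)$. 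So $(\ga^3,\ell^3)$ is $R_n$ with loop lengths $\ell(\alpha_1), \dots, \ell(\alpha_{n-2}), \ell(\beta), \ell(e_1)+\ell(e_2)$.

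Now fix positive lengths for $\beta$ and the $\alpha_i$, chosen pairwise distinct and distinct from the values of $\ell(e_1), \ell(e_2)$ I intend to use, and vary the pair $(\ell(e_1),\ell(e_2))$ subject only to $\ell(e_1)+\ell(e_2)$ being a fixed constant (and normalizing the total length to $1$). Every member of this family has the same $3$-edge connectivization, namely the above copy of $R_n$ with \emph{fixed} loop lengths; hence by Proposition \ref{3-edge} all of them have isomorphic tropical Jacobian varieties, i.e. the same image in $\mathcal A_n^{trop}$ under $\Pi^{trop}$. On the other hand, with the genericity hypothesis on the fixed lengths, any metric-graph isomorphism between two members of the family is a graph automorphism of $\ga$ that fixes every $\alpha_i$ and $\beta$ and at most transposes $e_1$ and $e_2$; thus the isomorphism class of $(\ga,\ell)$ is determined by the \emph{unordered} pair $\{\ell(e_1),\ell(e_2)\}$. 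Choosing two such unordered pairs with the same sum but unequal as sets — there are clearly infinitely many — produces distinct points of $\M_n^t$ with equal image under $\Pi^{trop}$, which is the asserted non-injectivity.

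The routine part is the genus count and the verification that $\{e_1,e_2\}$ is a C1-set and that contracting $e_2$ yields the claimed rose; the only step needing care is ruling out unexpected automorphisms of the metric graphs in the family, which is exactly why the fixed lengths must be generic (and, when $n=3$, why one also needs $\ell(\alpha_1)\neq\ell(\beta)$ to block the combinatorial symmetry swapping $a$ and $b$). It is worth remarking that the argument genuinely uses $n \geq 3$: for $n = 2$ the only metric graphs allowed in $X_2$ (no vertex of valence $\leq 2$, no separating edge) are the theta graph and the figure-eight, and each already coincides with its own $3$-edge connectivization, so no such family exists — consistent with the fact that the tropical Jacobian map is injective on $\M_2^t$.
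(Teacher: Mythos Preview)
Your proof is correct and follows essentially the same approach as the paper: both invoke Proposition \ref{3-edge} and produce, for $n\geq 3$, a graph without separating edges that possesses a C1-set of cardinality at least $2$ (a separating pair of edges), so that varying the individual edge lengths within that C1-set while fixing their sum yields non-isomorphic metric graphs with the same $3$-edge connectivization and hence the same tropical Jacobian. Your version is more explicit and more careful than the paper's sketch --- you write down a specific graph, verify the valence and genus conditions, and address the automorphism issue needed to conclude that the resulting metric graphs are genuinely non-isomorphic, a point the paper passes over with ``there are certainly infinitely many nonisomorphic metric graphs with the same value of $\ell^3(e_S)$.''
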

\begin{proof}
There are several reasons for the failure of the injectivity of $\Pi^{trop}$. 
If $\ga$ contains separating edges, then all separating edges
are contracted in $\ga^3$. But as metric graphs, the lengths of these separating edges matter.
For the outer space of metric graphs, in this paper, we mean the reduced outer space
consisting of marked metric graphs with no separating edges (or bridges). 
This does not cause a problem.  

Even for graphs which do not contain any separating edges, the tropical 
Jacobian map can still fail to be injective.
Take a graph $\ga$ such that some C1-set $S$ of $\ga$ contains at least
2 edges, $e_1, \cdots, e_k$, $k\geq 2$. 
Now in the 3-edge connectivization $\ga^3$, the edge $e_S$ corresponding to
the set $S$ has length $\ell^3(e_S)=\ell(e_1)+\cdots +\ell(e_k)$.
By Proposition \ref{3-edge},  
the Jacobian variety of $(\ga, \ell)$ is the same as the Jacobian variety
of its 3-edge connectivization $(\ga^3, \ell^3)$, and it  depends 
on the sums $\sum_{e\in S}\ell(E)$ for C1-sets $S$.
On the other hand, as metric graphs, they depend on the lengths of edges $\ell(e)$, $e\in S$.
There are certainly infinitely many nonisomorphic metric graphs with the same value
of $\ell^3(e_S)=\ell(e_1)+\cdots +\ell(e_k)$. 

Some examples of C1-sets that contain more than one element are explained
after Definition \ref{C1-set} (see also Remark \ref{more-exam} below). 
In particular, we see that for every $g\geq 3$, 
there exist metric graphs of genus $g$ which do not contain
any separating edge but contain C1-sets $S$ with cardinality of
at least 2, for example, separating pairs of edges. This proves Corollary \ref{failure}. 
\end{proof}

\begin{rem}\label{more-exam}
{\em To construct a graph $\ga$ which contains a large C1-set $S$,
we start with a graph $\ga_1$ which is homeomorphic to the circle and has
$k\geq 3$ edges. 
But the valence of every vertex of $\ga_1$ is equal to 2. 
To overcome this, we attach a connected graph whose vertices have valence of at least 3
at every vertex of $\ga_1$. Denote the new graph by $\ga$.
Take $S$ to consist of the edges of $\ga_1$.
Then it can be seen that $S$ is a C1-set of $\ga$ with $k$ edges.
This explains that there are sets of metric graphs of arbitrarily large dimension 
which are mapped to a point under the tropical Jacobian map 
$\Pi^{trop}$.
}
\end{rem}

\section{Tropical Jacobian map and invariant complete geodesic metrics on the outer space}

As discussed in the previous section, the tropical Jacobian variety of tropical curves defines
the tropical Jacobian map in Equation (\ref{tropical-jacobian}):
$$\Pi^{trop}: \M_g^t\to \mathcal A_n^{trop}\cong \mathrm{GL}(n, \Z)\backslash \mathrm{GL}(n, \R)/\mathrm{O}(n).$$
By Propositions \ref{tro-metric} and \ref{metric-tro}, there is a natural map
$X_n\to \M_g^t$, which factors through the action of $\mathrm{Out}(F_n)$
and gives an embedding
$$\mathrm{Out}(F_n)\backslash X_n \to \M_g^t$$ and hence  a {\em tropical Jacobian map} for metric
graphs
$$\Pi^{trop}: \mathrm{Out}(F_n)\backslash X_n \to \mathrm{GL}(n, \Z)\backslash  \mathrm{GL}(n, \R)/\mathrm{O}(n).$$
This map can be lifted to the equivariant period map in Equation (\ref{period-map-1})
\begin{equation}\label{period-map-3}
\Pi=\Pi^{trop}: X_n\to \mathrm{GL}(n, \R)/\mathrm{O}(n).
\end{equation}
(For simplicity, we denote the period map here and below by $\Pi$ instead of $\Pi^{trop}$.)

This map can be explained more directly as follows.
We can fix an isomorphism $H_1(R_n, \Z)\cong \Z^n$, where $R_n$ is the rose with $n$ petals.
Then for each marked metric graph $(\ga, \ell, h)$ in $X_n$, the marking $h$ induces
a canonical isomorphism $h_*: H_1(\ga, \Z)\cong \Z^n$, and hence an isomorphism
$h_*: H_1(\ga, \R)\cong \R^n$ by linear extension. By Proposition \ref{lattice-embed}, we have
$\Omega(J)^*\cong H_1(\ga, \R)$ under the period map in Equation \ref{period-map-2}.
This implies that the positive quadratic form $Q$ on $\Omega(J)^*$ induces a positive
definite {\em matrix} on $\R^n$ with respect to the standard basis
 and gives a point in  $\mathrm{GL}(n, \R)/\mathrm{O}(n)$. 
We also note that by Proposition \ref{tropical-flat}, the tropical structure and the principal
polarization $Q$ on the torus $H_1(\ga, \Z)\backslash \Omega(\ga)^*\cong H_1(\ga, \Z)\backslash H_1(\ga, \R)$ 
are all uniquely determined
by the positive definite quadratic form $Q$. 

Note that $\mathrm{GL}(n, \R)$ is a reductive Lie group, and $\mathrm{GL}(n, \R)/\mathrm{O}(n)$
is a symmetric space of nonpositive curvature. Fix an invariant Riemannian
metric on $\mathrm{GL}(n, \R)/\mathrm{O}(n)$, and let $d_{inv}$ be the induced
Riemannian distance function.

If the map $\Pi$ in Equation \ref{period-map-3} were injective, 
then the pull-back metric $\Pi^*d_{inv}$
would give an $\mathrm{Out}(F_n)$-invariant metric on the outer space $X_n$.
Unfortunately,  it is  not injective due to the infinite kernel of $\mathrm{Out}(F_n)\to \mathrm{GL}(n, \Z)$
as pointed out in \S 2, or  by Corollary \ref{failure} when $n\geq 3$.
On the other hand, $\Pi^*d_{inv}$ is a pseudo-metric, i.e., it satisfies all the conditions
of a metric except for the positivity condition: $\Pi^*d_{inv}(x, y)>0$ when $x\neq y$.

Recall that $X_n$ admits the canonical $\mathrm{Out}(F_n)$-invariant incomplete
simplicial metric $d_0$ (Proposition \ref{simplicial-metric}). 
The {\em simple key idea} is to combine $\Pi^*d_{inv}$ with the metric $d_0$. 
For every two points $x, y\in X_n$,
define a {\em distance} $d_1(x, y)$ between them by
\begin{equation}
d_1(x, y)=d_0(x, y)+d_{inv}(\Pi(x), \Pi(y)). 
\end{equation}

\begin{prop}
The function $d_1: X_n\times X_n\to \R_{\geq 0}$ is an $\mathrm{Out}(F_n)$-invariant
metric on $X_n$.
\end{prop}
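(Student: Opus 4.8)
The plan is to verify directly that $d_1 = d_0 + \Pi^{*}d_{inv}$ satisfies the metric axioms together with the $\mathrm{Out}(F_n)$-invariance, exploiting the fact that it is the sum of the honest metric $d_0$ and the pulled-back function $(\Pi^{*}d_{inv})(x,y) := d_{inv}(\Pi(x),\Pi(y))$. The first step is to observe that $\Pi^{*}d_{inv}$ is a pseudo-metric on $X_n$: symmetry, and the triangle inequality applied to the triple $\Pi(x),\Pi(y),\Pi(z)$, are inherited verbatim from the corresponding properties of the Riemannian distance $d_{inv}$ on $\mathrm{GL}(n,\R)/\mathrm{O}(n)$, and $\Pi^{*}d_{inv}(x,x)=0$; only the separation axiom may fail, precisely because $\Pi$ is not injective (Corollary \ref{failure}, or the infinite kernel of $\mathrm{Out}(F_n)\to\mathrm{GL}(n,\Z)$).

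Next I would note that the sum of two pseudo-metrics is again a pseudo-metric, since symmetry and the triangle inequality add termwise; as a metric is in particular a pseudo-metric, $d_1 = d_0 + \Pi^{*}d_{inv}$ is therefore symmetric, nonnegative, vanishes on the diagonal, and obeys the triangle inequality. The key (and essentially the only) point is the separation axiom, and here $d_0$ already does all the work: for $x\ne y$ one has $d_1(x,y)=d_0(x,y)+\Pi^{*}d_{inv}(x,y)\ge d_0(x,y)>0$ by Proposition \ref{simplicial-metric}, so $d_1(x,y)=0$ if and only if $x=y$, and $d_1$ is a genuine metric. I do not expect any real obstacle in this proposition; the substantive features of $d_1$ — that its induced length metric is complete and geodesic — are what the later sections are devoted to establishing.

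Finally, for invariance, fix $g\in\mathrm{Out}(F_n)$. Then $d_0(gx,gy)=d_0(x,y)$ by Proposition \ref{simplicial-metric}. For the second term, the period map $\Pi$ is equivariant with respect to the homomorphism $A\colon\mathrm{Out}(F_n)\to\mathrm{GL}(n,\Z)$, so $\Pi(gx)=A(g)\cdot\Pi(x)$ and $\Pi(gy)=A(g)\cdot\Pi(y)$; since $d_{inv}$ is the distance function of a $\mathrm{GL}(n,\R)$-invariant Riemannian metric and $A(g)\in\mathrm{GL}(n,\Z)\subset\mathrm{GL}(n,\R)$ acts by isometries on the symmetric space, we get $d_{inv}(\Pi(gx),\Pi(gy))=d_{inv}(\Pi(x),\Pi(y))$. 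Adding the two identities yields $d_1(gx,gy)=d_1(x,y)$, which completes the proof.
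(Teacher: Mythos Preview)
Your proof is correct and follows essentially the same approach as the paper: the paper's proof simply observes that $d_1$ is a metric because $d_0$ is, and that $\mathrm{Out}(F_n)$-invariance follows from the equivariance of $\Pi$ together with the invariance of $d_0$ and $d_{inv}$. You have merely written out in detail the verifications that the paper leaves as ``clear.''
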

\begin{proof}
Since the map $\Pi$ is equivariant with respect to the homomorphism
 $\mathrm{Out}(F_n)\to \mathrm{GL}(n, \Z)$, $d_{inv}$ is invariant under $\mathrm{GL}(n, \Z)$,
and $d_0$ is invariant under $\mathrm{Out}(F_n)$, 
it is clear that $d_1$ is invariant under $\mathrm{Out}(F_n)$.
Since $d_0$ is a metric, it is also clear that $d_1$ is a metric on $X_n$. 
\end{proof}

\begin{prop}\label{metric-1}
The metric space $(X_n, d_1)$ is a complete locally compact metric space.
\end{prop}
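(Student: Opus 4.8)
The plan is to verify completeness and local compactness separately, each by reducing to properties of the two ingredients $d_0$ and $\Pi^*d_{inv}$ that are already available. Since $d_1 = d_0 + d_{inv}(\Pi(\cdot),\Pi(\cdot)) \geq d_0$, the metric $d_1$ dominates $d_0$; hence every $d_1$-Cauchy sequence is $d_0$-Cauchy, and the $d_1$-topology is at least as fine as the $d_0$-topology. The first step is therefore to take a $d_1$-Cauchy sequence $(x_k)$ in $X_n$ and show it converges. It is $d_0$-Cauchy, so within the incomplete metric space $(X_n,d_0)$ it either converges to some $x_\infty\in X_n$ or ``escapes to the boundary'', i.e.\ it approaches a missing simplicial face. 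The key point I would invoke is the defining property of $\Pi$ singled out in \S3: if $x_k$ converges to a boundary point (a point in a missing simplex), then for any basepoint $x_0$ one has $d_{inv}(\Pi(x_k),\Pi(x_0))\to+\infty$. If this happened, $(x_k)$ would not be $d_1$-bounded, contradicting that it is $d_1$-Cauchy (hence $d_1$-bounded). So the sequence must $d_0$-converge to a genuine point $x_\infty\in X_n$; continuity of $\Pi$ on the closure of each simplex (it is a differentiable, in fact piecewise-linear, map on simplices) then gives $d_{inv}(\Pi(x_k),\Pi(x_\infty))\to 0$, and therefore $d_1(x_k,x_\infty)\to 0$. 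This proves completeness.

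For local compactness, I would fix a point $x\in X_n$ and produce a compact $d_1$-neighborhood. Because $X_n$ is a \emph{locally finite} simplicial complex, $x$ lies in only finitely many closed simplices, and their union inside $X_n$ — call it $N$ — is a neighborhood of $x$; more precisely, intersecting with small enough $d_0$-balls, a closed $d_0$-ball $\bar B_{d_0}(x,r)$ for small $r$ is contained in finitely many closed simplices and is $d_0$-closed. The issue is that such a ball need not be $d_0$-compact, since the closed simplices of $X_n$ are noncompact (they are missing faces). However, the faces that are missing are exactly the ones on which the length of some cycle of the graph degenerates to zero, and along any sequence approaching such a face the period form $Q$ blows up, so $d_{inv}(\Pi(\cdot),\Pi(x))\to\infty$. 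Thus a small closed $d_1$-ball $\bar B_{d_1}(x,\rho)$ sits inside finitely many closed simplices and stays a bounded $d_{inv}$-distance from $\Pi(x)$, which confines it to a genuinely compact region: in each closed simplex, the set of points whose $Q$-matrix lies in a fixed compact subset of $\mathrm{GL}(n,\R)/\mathrm{O}(n)$ is closed and bounded away from the missing faces, hence compact. Intersecting this compact set with the $d_0$-closed ball $\bar B_{d_1}(x,\rho)$ (which is automatically $d_0$-closed since $d_1\geq d_0$) gives a $d_1$-closed subset of a compact set, hence a compact $d_1$-neighborhood of $x$.

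The main obstacle I anticipate is the careful handling of the ``escape to the boundary'' dichotomy: one must argue that a $d_0$-Cauchy sequence in $X_n$ that fails to converge in $X_n$ really does approach a missing face in the precise sense needed to apply the blow-up property of $\Pi$, and one must confirm that the blow-up property (stated informally in \S3 as the design goal of $\Pi$) genuinely holds for the period map — i.e.\ that as some cycle's length tends to $0$, the eigenvalues of the period matrix $Q$ are unbounded in the symmetric space. Concretely this comes down to observing that $Q$, restricted to a cycle $\sigma$, has $Q(\sigma,\sigma)=\sum_{e\in\sigma}a_e^2\ell(e)$, so a degenerating cycle forces $Q$ to have an arbitrarily short vector relative to the integral lattice, pushing $[Q]$ to infinity in $\mathrm{GL}(n,\Z)\backslash\mathrm{GL}(n,\R)/\mathrm{O}(n)$ and, since $\Pi$ is $\mathrm{Out}(F_n)$-equivariant, to infinity in $\mathrm{GL}(n,\R)/\mathrm{O}(n)$ relative to the fixed basepoint $\Pi(x_0)$. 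Once this quantitative statement is in hand, both completeness and local compactness follow as outlined, with the remaining verifications being routine.
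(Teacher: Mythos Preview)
Your proposal is correct and follows essentially the same approach as the paper: both argue that a sequence approaching a missing simplicial face has a cycle whose length tends to zero, hence the period form $Q$ acquires an arbitrarily short integral vector and $\Pi(x_k)$ diverges in $\mathrm{GL}(n,\R)/\mathrm{O}(n)$ (the paper names this step the Mahler compactness theorem), so $d_1$-bounded sequences cannot escape. Your treatment is slightly more explicit about the Cauchy-sequence formulation and the local compactness argument, but the key mechanism is identical.
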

\begin{proof}
Fix  a basepoint $y_0\in X_n$.
Since $X_n$ is a locally finite simplicial complex with some simplicial faces missing,
it suffices to prove that for any sequence of marked metric graphs 
$y_j=(\ga_j, \ell_i, h_i)$  in $X_n$, 
if $y_j$ converges to a missing simplicial face of a simplex of $X_n$,
then $d_1(x_0, y_j)\to +\infty$ as $j\to +\infty$. 
We note that when $y_j$ approaches a missing simplicial face, the length
of the shortest loop (or nontrivial cycle) of the metric graph $(\ga_j, \ell_i, h_i)$ goes to
$0$ as $j\to +\infty$. This means that the shortest closed geodesic
of the flat tori  $H_1(\ga_j, \Z)\backslash H_1(\ga_j, \R)\cong H_1(R_n, \Z)\backslash 
H_1(R_n, \R)\cong \Z^n\backslash \R^n$ 
goes to zero, i.e., the minimum  value of the positive definite matrix 
$\Pi(y_j)\in \mathrm{GL}(n, \R)/\mathrm{O}(n)$ on nonzero integral points in $H_1(\ga_j, \Z)\cong \Z^n$
goes to 0. By the Mahler compactness theorem for quadratic forms (or lattices),
this implies that $\Pi(y_j)$ is 
a divergent sequence in the symmetric space $\mathrm{GL}(n, \R)/\mathrm{O}(n)$. 
Since the invariant metric $d_{inv}$ on the symmetric space $\mathrm{GL}(n, \R)/\mathrm{O}(n)$ is complete,
$d(\Pi(y_0), \Pi(y_j))\to +\infty$ as $j\to +\infty$. Since  $d_1(y_0, y_j)\geq d_{inv}(\Pi(y_0), \Pi(y_j))$,
the proposition is proved.
\end{proof}

\begin{rem}
{\em The above result shows an important difference between the tropical period
map $\Pi^{trop}$ and  the  period map
for Riemann surfaces, 
$\Pi: \T_g\to \mathfrak h_g$, where $\T_g$ is the Teichm\"uller space of compact 
surfaces of genus $g$ and $\mathfrak h_g= \mathrm{Sp}(2g, \R)/\mathrm{U}(g)$
is the Siegel upper half space. When we pinch a family of hyperbolic Riemann surfaces
along a separating geodesic, i.e., a geodesic which cuts the surface into two connected
components, 
its period matrix does not go to the infinity of $\mathfrak h_g$.
Instead it converges to a point in $(\Omega_1, \Omega_2)\in
\mathfrak h_{g_1} \times \mathfrak h_{g_2}\subset \mathfrak h_g$,
where the limit hyperbolic surface is $\Sigma_{g_1, 1}\cup \Sigma_{g_2, 1}$, $g_1+g_2=g$, 
and $\Omega_1$ is the period of $\Sigma_{g_1}$, 
and $\Omega_2$ is the period of $\Sigma_{g_2}$. In particular, the period does not detect
the punctures in  $\Sigma_{g_1, 1}$ and $\Sigma_{g_2, 1}$.
This implies that the pulled back metric on $\T_g$ by $\Pi$ from the invariant metric 
of the Hermitian symmetric space $\mathfrak h_g$ is not complete.
This metric on $\T_g$ is usually called the Satake metric, probably due to the fact that
a compactification of $\M_g=\Mod_g\backslash \T_g$ was first constructed
via $\Pi$ from the Satake compactification of the Siegel modular variety
$ \mathrm{Sp}(2g, \Z)\backslash  \mathfrak h_g$. 
We note that metric graphs in the outer space $X_n$ do not contain separating edges,
and the above difficulty is avoided.
}
\end{rem}

Recall that a {\em geodesic segment} in a metric space $(X, d)$
is an isometric embedding $i: (a, b) \to (X, d)$ (sometimes its  image is also called
a geodesic segment), and $(X, d)$ is called a {\em geodesic metric space}
if every pair of points  are connected by a geodesic segment.
For many applications, it is important to obtain geodesic metrics.

For every path connected metric space $(X, d)$, there is a natural length structure
$(X, d_\ell)$ \cite[p. 3, 1.4.(a)]{gro} \cite[\S 2.3]{bur}.
Briefly, for every path $\gamma: [a, b]\to X$, define its length
$$\ell(\gamma)=\sup_{t_0< \cdots< t_N}\{\sum_{i=1}^N d(\gamma(t_{i-1}), \gamma(t_i))\},$$
where the $\sup$ is over all partitions $a=t_0< t_1 < \cdots < t_N=b$ of the interval $[a, b]$,
and for every two points $x, y\in X$,  the length metric $d_\ell(x, y)$ is given by 
$$d_\ell(x, y)=\inf_\gamma  \ell(\gamma),$$
where the $\inf$ is over all paths $\gamma$ connecting $x, y$.

In general, a length metric space is not necessarily a geodesic space. 
Let $d_{1, \ell}$ be the length metric on $X_n$ induced from the metric $d_1$
in Proposition \ref{metric-1}.

\begin{thm}\label{main1}
The length metric $d_{1, \ell}$ on $X_n$ is an $\mathrm{Out}(F_n)$-invariant complete
geodesic metric.
\end{thm}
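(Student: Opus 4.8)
The plan is to show that $(X_n, d_{1,\ell})$ is a complete, locally compact length space and then invoke the Hopf--Rinow theorem for length spaces (see \cite{bur}), which asserts that a complete, locally compact length space is a geodesic space in which bounded closed sets are compact. Invariance under $\mathrm{Out}(F_n)$ is immediate: since $d_1$ is $\mathrm{Out}(F_n)$-invariant by the previous proposition, the induced length metric $d_{1,\ell}$ is automatically invariant, because lengths of paths are computed from $d_1$ and $\mathrm{Out}(F_n)$ acts by $d_1$-isometries permuting paths. So the real content is completeness together with local compactness; geodesicity is then a formal consequence.

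First I would record the elementary comparison $d_0 \le d_{1,\ell}$ and, on the other side, that $d_{1,\ell}$ is finite (so that it is genuinely a metric and not identically $+\infty$): for any two points $x,y\in X_n$ there is a path in $X_n$ connecting them along a chain of simplices, and along each simplex both $d_0$ and the pulled-back distance $d_{inv}\circ\Pi$ vary continuously with finite total variation, so $\ell(\gamma)<\infty$ for such a $\gamma$. Since $d_0 \le d_1 \le d_{1,\ell}$ pointwise, and $d_0 \le d_{1,\ell}$, any $d_{1,\ell}$-Cauchy sequence is $d_0$-Cauchy. Because $X_n$ is a locally finite simplicial complex, a $d_0$-Cauchy sequence either stays (eventually) in a fixed compact subset of $X_n$ — in which case it converges in $X_n$, and one checks the limit is a $d_{1,\ell}$-limit since $d_{1,\ell}$ and $d_0$ induce the same topology on compact pieces (both restrict to the Euclidean metric on simplices, and $\Pi$ is continuous) — or it escapes every compact set, i.e.\ it approaches a missing simplicial face. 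In the latter case the argument of Proposition \ref{metric-1} applies: the length of the shortest loop of the metric graph tends to $0$, so by Mahler compactness the period matrices $\Pi(y_j)$ diverge in $\mathrm{GL}(n,\R)/\mathrm{O}(n)$, whence $d_{inv}(\Pi(y_0),\Pi(y_j))\to\infty$; since $d_{1,\ell}\ge d_1\ge d_{inv}\circ\Pi$, the sequence is not $d_{1,\ell}$-Cauchy, a contradiction. This proves completeness of $d_{1,\ell}$.

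For local compactness, fix $x\in X_n$. It lies in the closure of finitely many simplices of $X_n$ (local finiteness), and on this finite union the metric $d_1$ — hence its length metric, on a small enough ball which does not leave the union — is bi-Lipschitz equivalent to the restriction of $d_0$, whose small closed balls around $x$ are compact (closed bounded subsets of a finite union of closed Euclidean simplices). One must be mildly careful that for small $r$ the $d_{1,\ell}$-ball of radius $r$ about $x$ really is contained in this finite union of closures: this follows because leaving the union costs a definite amount of $d_0$-length (one must first reach the boundary of the star of the relevant simplices), hence a definite amount of $d_{1,\ell}$-length. Having established that $(X_n,d_{1,\ell})$ is a complete, locally compact length space, the Hopf--Rinow theorem for length spaces yields that it is a geodesic metric space, completing the proof.

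The main obstacle I anticipate is the bookkeeping around the missing faces: one must be sure that the dichotomy "a $d_0$-Cauchy sequence either lives eventually in a compact set or approaches a missing face" is correctly and completely justified in a locally finite complex with partially open simplices — in particular ruling out pathologies where a sequence oscillates among infinitely many simplices without approaching any single missing face — and that in the "approaches a missing face" case the shortest-loop length genuinely goes to zero (this is where the hypothesis that the outer space graphs have no separating edges, no valence-$1$ or $2$ vertices is used, so that collapsing any admissible set of edges still leaves a graph of the same genus only if no loop is killed). Everything else is routine application of Hopf--Rinow and of the estimates already appearing in Proposition \ref{metric-1}.
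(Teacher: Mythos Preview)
Your proposal is correct and follows essentially the same route as the paper: establish that $(X_n,d_{1,\ell})$ is a complete, locally compact length space and then invoke the Hopf--Rinow theorem for length spaces \cite[Proposition 2.5.22]{bur}. The paper's version is slightly more streamlined in that it cites the already-proved completeness and local compactness of $(X_n,d_1)$ (Proposition \ref{metric-1}) and transfers these to $d_{1,\ell}$ via the single inequality $d_{1,\ell}\ge d_1$, whereas you redo the Mahler-compactness argument and the local-compactness check from scratch using $d_0$; but the underlying ideas are identical.
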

 \begin{proof}
 Since $d_1$ is invariant under $\mathrm{Out}(F_n)$, it is clear that $d_{1, \ell}$ is also
 $\mathrm{Out}(F_n)$-invariant. 
 Since $X_n$ is a locally finite simplicial complex with some simplicial faces missing,
 and $d_1$ is a complete metric on $X_n$, it is clear that $(X_n, d_1)$ is a complete
 locally compact metric space. It is also clear from the definition of $d_1$ and the local
finiteness of the simplicial complex $X_n$ that for every two points $x, y\in X_n$,
$d_{1, \ell}(x, y)<+\infty$. 
 By definition, $d_{1, \ell}\geq d_1$,  and hence $(X_n, d_{1, \ell})$ is also
 a complete locally  compact length space.
 By \cite[Proposition 2.5.22]{bur}, $(X_n, d_{1, \ell})$ is a geodesic length space.
 \end{proof}

 From the above discussion, it is clear that we can also construct other $\mathrm{Out}(F_n)$-invariant
 complete geodesic metrics on $X_n$.
 For example, we can define a metric $d_2$ on $X_n$ by
 $$d_2(x, y)=\sqrt{d_0(x,y)^2+d_{inv}(\Pi(x), \Pi(y))^2},$$
 and obtain a corresponding {\em complete geodesic metric} $d_{2,\ell}$.
 Similarly, we can define
 $$d_\infty(x, y)=\max\{d_0(x, y), d_{inv}(\Pi(x), \Pi(y))\},$$
 and obtain a {\em complete geodesic metric} $d_{\infty, \ell}$.

  As mentioned in the introduction, we would like to construct some complete metrics
  on $X_n$ which resemble Riemannian metrics, for example, its restriction to
  each open simplex of $X_n$ is a Riemannian metric.
   It is not clear whether the above geodesic  metrics $d_{1, \ell}, d_{2, \ell},
  d_{\infty, \ell}$ on $X_n$ enjoy this property.
  
  In the next two sections, we construct two such metrics, which might offer better analogies 
  with the Riemannian metrics on symmetric spaces and Teichm\"uller spaces.

\section{A complete pesudo-Riemannian metric on the outer space via the tropical
Jacobian map}

In the previous section, we use the period map of metric graphs in Equation \ref{period-map-3}
to pull back the Riemannian {\em distance} $d_{inv}$ on the symmetric space $\mathrm{GL}(n, \R)/\mathrm{O}(n)$
in order to define geodesic metrics on $X_n$.
In this section, we want to construct geodesic metrics on $X_n$ which are Riemannian
metrics on each simplex of $X_n$.

Recall that $X=\mathrm{GL}(n, \R)/\mathrm{O}(n)$ is a Riemannian symmetric space of nonpositive
curvature. Let $ds_{inv}^2$ 
be a Riemannian metric tensor on it which is invariant under $\mathrm{GL}(n, \R)$. 
Equivalently, let $\langle \cdot, \cdot \rangle_{inv}$ be the invariant Riemannian metric on $X$.

When restricted to each open simplex $\Sigma$ of $X_n$, the period map 
$\Pi: \Sigma \to X$ is a smooth map. 
%(Note that k-simplex $\Sigma$ is contained in $\R^k$. A function $f$
%on $\Sigma$ is called smooth if it is the restriction of a smooth function
%on an open subset that contains $\Sigma$.)
 It is natural to pull back the Riemannian metric $ds_{inv}^2$ by $\Pi$.
But the problem is that $\Pi$ is not an embedding, and not even a local immersion when $n\geq 3$, 
by Corollary \ref{failure}. 
(As seen in the proof of Corollary \ref{failure}, 
it can map subsets of positive dimension of $X_n$ to one point).
Therefore, $\Pi^*(ds_0^2)$ is not positive definite, but only semi-positive definite.

To overcome this difficulty, we note that each open simplex $\Sigma$  of 
$X_n$ has a canonical flat Riemannian metric $\langle \cdot, \cdot\rangle_0$
such that each edge has length $\sqrt{2}$. 
Let $ds_0^2$ be the metric tensor of this flat Riemannian metric.

Define a bilinear form on the tangent bundle of $\Sigma$ by
\begin{equation}\label{bilinear}
\langle u, v\rangle_\Sigma=\langle d\Pi(u), d\Pi(v)\rangle_{inv}+\langle u, v\rangle_0,
\end{equation}
where $u, v$ are tangent vectors of $\Sigma$ at any point.
Equivalently, its associated symmetric metric tensor is
\begin{equation}
ds^2=\Pi^* (ds_{inv}^2)+ ds_0^2.
\end{equation}

\begin{lem}
The bilinear form 
$\langle \cdot, \cdot\rangle_\Sigma$ in Equation \ref{bilinear} defines a Riemannian metric on every open simplex $\Sigma$ of $X_n$. 
\end{lem}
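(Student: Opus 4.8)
The plan is to verify directly the three defining properties of a Riemannian metric tensor on $\Sigma$: symmetry, smoothness, and positive-definiteness, the last being the only one carrying real content.

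Symmetry is immediate. The invariant form $\langle\cdot,\cdot\rangle_{inv}$ on $X$ and the flat form $\langle\cdot,\cdot\rangle_0$ on $\Sigma$ are both symmetric, and $d\Pi$ is linear on each tangent space $T_p\Sigma$, so $(u,v)\mapsto\langle d\Pi(u),d\Pi(v)\rangle_{inv}$ is a symmetric bilinear form; hence so is $\langle\cdot,\cdot\rangle_\Sigma$ at every point of $\Sigma$.

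For smoothness I would first record that the restriction $\Pi\colon\Sigma\to X$ is smooth. On a fixed simplex $\Sigma=\Sigma_{(G,h)}$ the period matrix $Q$ is, by Equation (\ref{quad-form}), the restriction to $H_1(G,\R)\subset C_1(G,\R)$ of the diagonal quadratic form whose entries are the edge lengths $\ell(e_1),\dots,\ell(e_k)$; thus the entries of $Q$, in the identification $H_1(G,\R)\cong\R^n$, depend linearly on the coordinates $(\ell(e_1),\dots,\ell(e_k))$ parametrizing $\Sigma$, so $\Pi|_\Sigma$ is even real-analytic, in particular smooth. Since $ds_{inv}^2$ is a smooth symmetric $2$-tensor on $X$, its pullback $\Pi^*(ds_{inv}^2)$ is a smooth symmetric $2$-tensor on $\Sigma$, and $ds_0^2$ is the (smooth) flat Euclidean metric tensor of the ambient simplex; hence $ds^2=\Pi^*(ds_{inv}^2)+ds_0^2$ is smooth.

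Finally, for positive-definiteness, fix $p\in\Sigma$ and $0\neq u\in T_p\Sigma$. Because $\langle\cdot,\cdot\rangle_{inv}$ is positive definite, $\langle d\Pi(u),d\Pi(u)\rangle_{inv}\geq 0$ (with equality exactly when $d\Pi(u)=0$), while $\langle u,u\rangle_0>0$ since $\langle\cdot,\cdot\rangle_0$ is the flat metric on $\Sigma$; therefore $\langle u,u\rangle_\Sigma=\langle d\Pi(u),d\Pi(u)\rangle_{inv}+\langle u,u\rangle_0>0$. Hence $\langle\cdot,\cdot\rangle_\Sigma$ is a smooth, symmetric, positive definite $2$-tensor on $\Sigma$, i.e.\ a Riemannian metric. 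I do not expect a genuine obstacle here: the whole point of the lemma is that adding the flat term $ds_0^2$ repairs the degeneracy of $\Pi^*(ds_{inv}^2)$, which fails to be positive definite precisely because $\Pi$ need not be an immersion when $n\geq 3$ (Corollary \ref{failure}); the only step worth spelling out is the smoothness (indeed linearity on each simplex) of $\Pi|_\Sigma$.
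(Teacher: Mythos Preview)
Your proof is correct and follows essentially the same approach as the paper's: the paper simply observes that $\Pi^*(ds_{inv}^2)$ is semi-positive definite, $ds_0^2$ is positive definite, hence their sum is positive definite, and that $\Pi$ depends smoothly on the simplicial coordinates. Your version is more explicit---you verify symmetry separately and spell out that the entries of $Q$ are linear in the edge lengths---but the content is identical.
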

\begin{proof}
Since  $ds_0^2$ is positive definite, and $\Pi^* (ds_{inv}^2)$
is semi-positive definite, their sum $ds^2=\Pi^* (ds_{inv}^2)+ ds_0^2$ is
positive definite.
Since $\Pi$ depends smoothly on the simplicial coordinates of $\Sigma$,
this implies that $ds^2$ defines a smooth Riemannian metric on $\Sigma$.
\end{proof}

Since $X_n$ is a simplicial complex and
is a disjoint union of open simplices $\Sigma$,  the metrics $ds^2$ on the open
simplices $\Sigma$ give a {\em stratified
Riemannian metric} on $X_n$. 
We denote it by $ds^2$ or $\langle \cdot , \cdot\rangle$.

A simplicial complex is a stratified space with a {\em smooth structure} in the sense
of \cite[Chapter 1]{pf}, and we want to show that the stratified
Riemannian metric $ds^2$ gives a smooth Riemannian metric
on $X_n$ in the sense of \cite[\S 2.4]{pf}, or rather more directly it is a piecewise
smooth Riemannian metric on $X_n$.

For each simplex $\Sigma$ of $X_n$, let $\R^i$ be the linear space of the same dimension
that contains $\Sigma$, or  is spanned by $\Sigma$.
Let $\overline{\Sigma}$ be the closure of $\Sigma$ in $X_n$, which is  contained in $\R^i$
and but still not a closed simplex. 
A function $f$ or a tensor $T$ on $\overline{\Sigma}$ is called {\em smooth} if there exists
an open subset $U$ of $\R^i$ which contains $\overline{\Sigma}$ such that $f$ or $T$
can be extended to a smooth function or a tensor on $U$.

\begin{prop}\label{smooth-extension}
For every simplex $\Sigma$ of $X_n$, the Riemannian metric $ds^2$ can be extended
to a smooth Riemannian metric on $\overline{\Sigma}$.
\end{prop}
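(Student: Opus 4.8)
The plan is to write the stratified metric on the open simplex as $ds^2=\Pi^*(ds_{inv}^2)+ds_0^2$ and extend each of the two summands separately across $\overline{\Sigma}$. Fix the simplex $\Sigma=\Sigma_{(G,h)}$ with edges $e_1,\dots,e_k$, realized as an open subset of the affine subspace $\R^i$ (in the edge-length coordinates $\ell(e_1),\dots,\ell(e_k)$, subject to $\sum_j\ell(e_j)=1$). The flat part $ds_0^2$ is, by its very definition in Proposition \ref{simplicial-metric}, the restriction to $\Sigma$ of a translation-invariant Riemannian metric on $\R^i$; it therefore extends, tautologically and smoothly, to a positive definite metric on all of $\R^i$, hence on any open neighbourhood of $\overline{\Sigma}$. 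So the whole content is in the pulled-back summand $\Pi^*(ds_{inv}^2)$.

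First I would record that, in the simplicial coordinates, the period map $\Pi$ is the restriction of an affine-linear map. Having fixed once and for all a basis of $H_1(R_n,\Z)\cong\Z^n$ and used the marking $h$ to identify $H_1(G,\Z)\cong\Z^n$, the period matrix of $(\ga,\ell,h)$ is the Gram matrix, in this fixed basis, of the quadratic form of Equation (\ref{quad-form}) restricted to $H_1(G,\R)\subset C_1(G,\R)$; concretely it sends a cycle $\sigma=\sum_e a_e e$ to $\sum_e a_e^2\,\ell(e)$, which is linear in $\ell(e_1),\dots,\ell(e_k)$ (cf. Remark \ref{many-def}). Hence $\Pi|_\Sigma$ is the restriction to $\Sigma$ of a smooth (affine-linear) map $L\colon \R^i\to \mathrm{Sym}^2(\R^n)$ into the space of symmetric $n\times n$ matrices, and $L$ is defined and smooth on all of $\R^i$.

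Next I would check that $L$ carries $\overline{\Sigma}$ into the open cone $\mathcal{P}_n\subset\mathrm{Sym}^2(\R^n)$ of positive definite matrices, which is exactly the symmetric space $X=\mathrm{GL}(n,\R)/\mathrm{O}(n)$ by Proposition \ref{iden}. A point of $\overline{\Sigma}$ is a length vector $\ell$ some of whose coordinates vanish, and it lies in $X_n$ precisely when no loop of $G$ is supported on the edges $\{e:\ell(e)=0\}$. Rerunning the proof of Proposition \ref{pos-definite}: for any nonzero $1$-cycle $\sigma=\sum_e a_e e$, the edges with $a_e\neq 0$ cannot all lie among the zero-length edges, so some such edge has $\ell(e)>0$ and $L(\ell)(\sigma,\sigma)=\sum_e a_e^2\,\ell(e)>0$. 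Thus $L(\overline{\Sigma})\subset\mathcal{P}_n$. Since $L$ is continuous and $\mathcal{P}_n$ is open in $\mathrm{Sym}^2(\R^n)$, the set $U:=\{x\in\R^i : L(x)\in\mathcal{P}_n\}$ is an open neighbourhood of $\overline{\Sigma}$ in $\R^i$, and $\widetilde{\Pi}:=L|_U\colon U\to X$ is a smooth map extending $\Pi|_\Sigma$. Consequently $\widetilde{\Pi}^*(ds_{inv}^2)$ is a smooth, positive semidefinite symmetric $2$-tensor on $U$, and adding the smooth positive definite tensor $ds_0^2$ gives a smooth tensor on $U$ that is positive definite everywhere on $U$ and restricts to $ds^2$ on $\Sigma$; this is exactly the assertion that $ds^2$ extends to a smooth Riemannian metric on $\overline{\Sigma}$ in the sense defined just before the proposition.

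The only substantive step — and the one I would expect to require care — is the verification that $\Pi$ does not escape to the boundary of $X$ along the faces of $\Sigma$ that belong to $X_n$, i.e.\ that $L(\overline{\Sigma})\subset\mathcal{P}_n$; this is precisely where the combinatorial description of the missing faces (no collapsed loop) feeds into the positivity argument of Proposition \ref{pos-definite}. Everything else is the formal fact that the pullback of a smooth tensor by a smooth — here even affine-linear — map is smooth, together with the trivial observation that $ds_0^2$ is the restriction of a constant-coefficient metric on $\R^i$.
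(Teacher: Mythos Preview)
Your proof is correct and follows essentially the same approach as the paper's: both reduce to extending the period map $\Pi$ smoothly into the cone of positive definite matrices near $\overline{\Sigma}$, with the key substantive step being exactly the one you flag --- that at a face of $\Sigma$ lying in $X_n$ no cycle is supported on the zero-length edges, so the argument of Proposition \ref{pos-definite} still gives positive definiteness. Your presentation is in fact a bit cleaner than the paper's (you use the global open set $U=L^{-1}(\mathcal P_n)$ rather than pointwise neighbourhoods $U_p$, and you make the affine-linearity of $\Pi$ in the simplicial coordinates explicit), but the content is the same.
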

\begin{proof}
It suffices to prove that for every point $p\in {\Sigma}$, there is a neighborhood $U_p$ of $p$ 
in $\R^i$ such that the period map $\Pi: \Sigma
\to X$ extends to a smooth function on $U_p \cup \Sigma \to X$.
Let $(\ga,  h)$ be a marked graph  whose corresponding  simplex $\Sigma_{(\ga, h)}$
 in $X_n$ is $\Sigma$.
Then the point $p$ corresponds to a marked metric graph $(\ga, \ell_p, h)$.
When $p$ lies on the boundary faces of $\sigma$, 
some edge lengths $\ell(e)$ are equal to $0$, but for every 1-cycle,
its total length is positive. By changing all edge lengths slightly
(allowing them to be positive, zero or negative)
 under the condition that
the sum of all edges is equal to 1, we obtain a neighborhood of $U_p$ in $\R^i$ 
such that for every 1-cycle of the graph $\ga$, its length is positive.
Since the period  $Q$ of a metric graph $(\ga, \ell, h)$  only depends on the lengths of 1-cycles,
$Q$ can be extended to $U_p$ and the extended $Q$ is positive definite. 
This gives the required
extension of the period map $\pi$ and
the metric $\Pi^*(ds_{inv}^2)+ds_0^2$ to $U_p\cup \Sigma$.
\end{proof}

\begin{prop}\label{glue-metric}
For every two simplices
$\Sigma_1, \Sigma_2$ with $\overline{\Sigma_1}\cap \overline{\Sigma_2}\neq \emptyset$,
the extended Riemannian metrics $\langle \cdot,  \cdot\rangle_{\Sigma_1}$ on $\overline{\Sigma_1}$
and $\langle \cdot, \cdot \rangle_{\Sigma_2}$ on $\overline{\Sigma_2}$  agree on the intersection $\overline{\Sigma_1}\cap \overline{\Sigma_2}$.
Therefore,  the stratified metric $ds^2$ on $X_n$
is a piecewise smooth Riemannian metric on $X_n$ as a simplicial complex.
\end{prop}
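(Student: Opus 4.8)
The plan is to reduce the statement to a pointwise comparison of the two metric tensors along a common face, and then to exploit the splitting $ds^2 = \Pi^*(ds_{inv}^2) + ds_0^2$ and argue for the two summands separately. First I would observe that, since $X_n$ is the disjoint union of its open simplices, every point $p \in \overline{\Sigma_1}\cap\overline{\Sigma_2}$ lies in a unique open simplex $\Sigma_0 = \Sigma_{(\ga_0, h_0)}$, and since $p$ is simultaneously a limit of points of $\Sigma_1$ and of points of $\Sigma_2$, this $\Sigma_0$ is a common simplicial face of both $\Sigma_1$ and $\Sigma_2$. Writing $\R^{i_1}, \R^{i_2}, \R^{i_0}$ for the ambient linear spaces of $\overline{\Sigma_1}, \overline{\Sigma_2}, \overline{\Sigma_0}$, the way $X_n$ is assembled from its simplices makes $T_p\Sigma_0 = \R^{i_0}$ a common linear subspace of $\R^{i_1}$ and of $\R^{i_2}$, and what has to be checked is that the extended bilinear forms $\langle\cdot,\cdot\rangle_{\Sigma_1}$ and $\langle\cdot,\cdot\rangle_{\Sigma_2}$ of Equation (\ref{bilinear}), extended to the closures by Proposition \ref{smooth-extension}, restrict to the same bilinear form on $T_p\Sigma_0$. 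This is exactly the compatibility condition required of a piecewise smooth Riemannian metric on a simplicial complex.

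For the flat summand $ds_0^2$ I would simply invoke the construction recalled just before Proposition \ref{simplicial-metric}: each $\overline{\Sigma_j}$ carries the standard Euclidean simplicial metric with edges of length $\sqrt{2}$, and these metrics are restrictions of one fixed Euclidean structure, automatically compatible on common simplicial faces; hence $ds_0^2|_{\Sigma_1}$ and $ds_0^2|_{\Sigma_2}$ induce the same form on $T_p\Sigma_0$.

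The substantive part is the summand $\Pi^*(ds_{inv}^2)$, and the key point I would use is that $\Pi$ is a \emph{single} globally defined continuous map $X_n \to \mathrm{GL}(n,\R)/\mathrm{O}(n)$ (Equation (\ref{period-map-3})), whose restriction to each $\overline{\Sigma_j}$ is precisely the smooth extension produced in the proof of Proposition \ref{smooth-extension}. Since the period matrix $Q$ of a marked metric graph depends only on the lengths of its $1$-cycles, contracting the edges of $\ga_1$ (resp. of $\ga_2$) that do not survive in $\ga_0$ identifies $H_1(\ga_1,\Z)$ (resp. $H_1(\ga_2,\Z)$) with $H_1(\ga_0,\Z)$ compatibly with the markings and with the quadratic forms; hence $\Pi|_{\overline{\Sigma_1}}$ and $\Pi|_{\overline{\Sigma_2}}$ both restrict on $\Sigma_0$ to the intrinsic period map $\Pi|_{\Sigma_0}$. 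Differentiating at $p$ in directions $u, v \in T_p\Sigma_0$ then gives $d(\Pi|_{\Sigma_1})(u) = d(\Pi|_{\Sigma_0})(u) = d(\Pi|_{\Sigma_2})(u)$, and likewise for $v$, so the pulled-back forms $\langle d\Pi(u), d\Pi(v)\rangle_{inv}$ computed from $\Sigma_1$ and from $\Sigma_2$ agree on $T_p\Sigma_0$. Adding the two summands yields the first assertion, and combining it with Proposition \ref{smooth-extension} (smoothness on each closed simplex) gives that $ds^2$ is a piecewise smooth Riemannian metric on $X_n$ in the sense of \cite[\S 2.4]{pf}.

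The step I expect to require the most care is making sure that the smooth extension of $\Pi$ produced in Proposition \ref{smooth-extension}, which a priori is only defined on an auxiliary neighborhood $U_p \subset \R^{i}$ and is not canonical there, is nevertheless canonical when restricted to $\overline{\Sigma}$: it agrees with the global continuous map $\Pi$ on the dense subset $\Sigma$, hence on all of $\overline{\Sigma}$ since the target is Hausdorff. This is where the global (rather than merely local and chart-dependent) nature of the period map is essential, and it is what legitimizes comparing the two extensions on the common face $\Sigma_0$ at all. Once this is pinned down, the remaining bookkeeping — the identifications of $1$-cycles, markings and tangent directions under edge contraction — is routine.
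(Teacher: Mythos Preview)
Your proposal is correct and follows essentially the same approach as the paper: both arguments pick a common simplicial face $\Sigma_0$ (the paper calls it $\Sigma_3$), observe that the intrinsic metric $\Pi^*(ds_{inv}^2)+ds_0^2$ is defined on $\Sigma_0$ itself, and then check that the extended metrics from $\Sigma_1$ and $\Sigma_2$ both restrict to this intrinsic metric on tangent vectors to $\Sigma_0$. Your version is more explicit about splitting into the two summands and about why the extension of $\Pi$ to $\overline{\Sigma}$ is canonical, whereas the paper simply writes ``it is clear from the previous proposition'' and leaves these points to the reader.
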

\begin{proof}
Let $\Sigma_3$ be a common simplicial face of $\overline{\Sigma_1}$,
$\overline{\Sigma_2}$. 
For every point $p\in \Sigma_3$, let $\langle \cdot, \cdot \rangle_p$ be the inner product
associated with the metric tensor $\Pi^*(ds_{inv}^2)+ds_0^2$ on $\Sigma_3$.
Then for every tangent vector $v$ to $\Sigma_3$ at $p$, it is clear from the previous
proposition that
$$\langle v, v \rangle_{\Sigma_1}=\langle v, v \rangle_p, \quad  \langle v, v \rangle_{\Sigma_2}=\langle v, v \rangle_p.$$
Therefore, $\langle \cdot,  \cdot \rangle_{\Sigma_1}$ on $\overline{\Sigma_1}$
and $\langle \cdot, \cdot \rangle_{\Sigma_2}$ agree on $\Sigma_3$. 
\end{proof}

With this piecewise smooth Riemannian metric on the stratified space $X_n$, we will introduce a
length metric on $X_n$, following the general setup of \cite{pf}.
 
Specifically, we consider piecewise smooth curves in $X_n$,
i.e., continuous maps $c: [a, b]\to X_n$
such that there is a partition $a=t_0 < t_1 \cdots < t_n=b$,  where
each piece $c:[t_{i-1}, t_i]$ is a smooth curve in a simplex $\overline{\Sigma_i}$.
Define the length of $c$ by
$$\ell(c)=\sum_{i=1}^n\int_{t_{i-1}}^{t_i} \langle c'(t), c'(t) \rangle_{\Sigma_i} dt .$$
For every pair of points $p, q\in X_n$,
define a length function
$$d_{\ell, R}(p, q)=\inf_c \ell (c),$$
where $c$ ranges over all piecewise smooth curves in $X_n$ connecting $p$ and $q$.
We note that the subscript $R$ stands for a Riemannian metric.

\begin{prop}\label{complete-Riemannian}
The  function
$d_{\ell, R}$ defines a complete, locally compact length metric on $X_n$ which is invariant under $\mathrm{Out}(F_n)$ and restricts to a Riemannian distance on each simplex $\Sigma$ of $X_n$.
\end{prop}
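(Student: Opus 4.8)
The plan is to verify the four assertions one at a time, in each case reducing to material already at hand: the simplicial metric $d_0$ and its properties (Proposition~\ref{simplicial-metric}), the metric $d_1$ together with the Mahler compactness argument behind its completeness (Proposition~\ref{metric-1}), and the smooth extension and gluing of the stratified metric $ds^2$ (Propositions~\ref{smooth-extension} and~\ref{glue-metric}). First I would check that $d_{\ell,R}$ is a finite-valued metric. Finiteness holds because $X_n$ is path connected (Theorem~\ref{cont}) and, as a locally finite simplicial complex carrying the well-defined piecewise smooth Riemannian metric of Proposition~\ref{glue-metric}, admits a piecewise smooth curve of finite length between any two points; symmetry and the triangle inequality are automatic for an infimum-of-lengths construction. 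Positivity is the point with content, and the plan is to use the pointwise comparison $ds^2=\Pi^*(ds_{inv}^2)+ds_0^2\geq ds_0^2$ on every open simplex: it gives $\ell(c)\geq \ell_0(c)$ for every piecewise smooth curve $c$, with $\ell_0$ the length for the flat metric $ds_0^2$, hence $d_{\ell,R}\geq d_0$; since $d_0$ is a genuine metric, $d_{\ell,R}(p,q)>0$ for $p\neq q$, and $d_{\ell,R}$ is a length metric by construction. The same comparison for the other summand, together with the continuity of $\Pi\colon X_n\to\mathrm{GL}(n,\R)/\mathrm{O}(n)$ (the period $Q$ depends continuously on edge lengths, even across faces, since it only involves lengths of cycles), yields $d_{\ell,R}(p,q)\geq d_{inv}(\Pi(p),\Pi(q))$, so altogether $d_{\ell,R}\geq\max(d_0,\,d_{inv}\circ\Pi)\geq\tfrac12 d_1$.

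Invariance under $\mathrm{Out}(F_n)$ is immediate: the action is simplicial and preserves $ds_0^2$ (Proposition~\ref{simplicial-metric}), while $\Pi$ is equivariant for $\mathrm{Out}(F_n)\to\mathrm{GL}(n,\Z)$ and $ds_{inv}^2$ is $\mathrm{GL}(n,\Z)$-invariant, so $\Pi^*(ds_{inv}^2)$, hence $ds^2$ and $d_{\ell,R}$, are $\mathrm{Out}(F_n)$-invariant. For completeness — the main analytic point — the plan is to reuse the argument of Proposition~\ref{metric-1}: from $d_{\ell,R}\geq\tfrac12 d_1$ a $d_{\ell,R}$-Cauchy sequence is $d_1$-Cauchy, hence converges in $(X_n,d_1)$, i.e.\ in the simplicial topology, to some $x$; near $x$ only finitely many closed simplices are involved, on each of which $ds^2$ extends smoothly (Proposition~\ref{smooth-extension}) and $\Pi$ is locally Lipschitz, so $d_{\ell,R}$ and $d_1$ are locally bi-Lipschitz equivalent there, forcing convergence in $d_{\ell,R}$ as well. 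The same local bi-Lipschitz equivalence shows that $d_{\ell,R}$ induces the simplicial topology, so $(X_n,d_{\ell,R})$ is locally compact. The geometric mechanism is as in Proposition~\ref{metric-1}: when a marked metric graph degenerates to a missing face its shortest cycle shrinks to length $0$, so by Mahler compactness $\Pi(y_j)$ leaves every compact subset of $\mathrm{GL}(n,\R)/\mathrm{O}(n)$ and $d_{inv}(\Pi(y_0),\Pi(y_j))\to\infty$, whence $d_{\ell,R}(y_0,y_j)\to\infty$.

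For the last clause I want $d_{\ell,R}|_{\Sigma\times\Sigma}$ to be the Riemannian distance of $(\Sigma,ds^2|_\Sigma)$ for each open simplex $\Sigma$; the inequality $\leq$ is trivial, and for $\geq$ the plan is a no-shortcut argument of the type used in Proposition~\ref{simplicial-metric}. For two points in the interior of $\Sigma$ this is clear, since sufficiently short curves between them stay in the open set $\Sigma$. At boundary points of $\overline{\Sigma}$ the plan is to use that $\overline{\Sigma}$ is a convex subset of its ambient $\R^i$ (the closed standard simplex with the bad faces removed; the relative interior of the segment between any two of its points lies in a retained face, hence in $X_n$) and that $ds^2$ extends smoothly across faces by Proposition~\ref{smooth-extension}: a curve wandering through adjacent simplices is unfolded/developed into $\R^i$ and then projected back onto the convex set $\overline{\Sigma}$ without increasing its $ds_0^2$-length, and one argues that the non-negative correction $\Pi^*(ds_{inv}^2)$ creates no shortcut either, because $\Pi$ agrees on overlapping simplices with a single smooth map defined on a neighbourhood of $\overline{\Sigma}$.

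\textbf{Expected main obstacle.} Once $d_{\ell,R}\geq\tfrac12 d_1$ and the smooth extension of $ds^2$ are in place, everything except the last clause is essentially bookkeeping, and the substantive input (completeness) is the Mahler compactness argument already carried out for $d_1$. The step that will take the most care to make rigorous is the no-shortcut property for the restriction to a simplex: carrying out the unfolding/projection argument honestly across simplices of possibly different dimensions, and checking that adding the semi-positive term $\Pi^*(ds_{inv}^2)$ to the flat metric on a closed simplex cannot open up a shorter route between two of its points through neighbouring simplices.
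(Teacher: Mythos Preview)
Your proposal is correct and in fact more thorough than the paper's own proof, which is quite terse. The substantive difference is in how completeness is obtained. You argue via the global comparison $d_{\ell,R}\geq\tfrac12 d_1$, then transfer a $d_{\ell,R}$-Cauchy sequence to a $d_1$-Cauchy sequence and use the already-established completeness of $(X_n,d_1)$ together with a local bi-Lipschitz argument to upgrade the limit. The paper instead isolates a separate lemma (stated and proved immediately after the proposition) asserting that any continuous path $c:[0,1)\to X_n$ heading to a missing face has $\Pi(c(t))$ escaping to infinity in $\mathrm{GL}(n,\R)/\mathrm{O}(n)$, hence infinite $ds^2$-length; completeness follows since a non-convergent Cauchy sequence in a length space would yield a finite-length path to the boundary. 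Both routes rest on the same Mahler compactness fact you identify, so the difference is packaging: your comparison $d_{\ell,R}\geq\tfrac12 d_1$ is clean and reusable, while the paper's path-length lemma is slightly more direct and avoids the local bi-Lipschitz step. Regarding the ``restricts to a Riemannian distance'' clause, the paper's proof does not address the no-shortcut property at all; your identification of this as the delicate point, and your honest flagging of the unfolding/projection argument for the $\Pi^*(ds_{inv}^2)$ term as the main obstacle, is apt---the paper simply does not engage with it.
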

\begin{proof}Since $X_n$ is a locally finite
simplicial complex with a piecewise smooth Riemann metric,
and since every two points $p, q$ of $X_n$ are connected by  a piecewise smooth curve,
it is clear that $d_{\ell, R}(p, q)<+\infty$, and if $p\neq q$, then $d_{\ell, R}(p, q)>0$,
and that $d_{\ell, R}$ defines the original topology of $X_n$.

Since the Riemannian metric $ds^2$ is invariant under $\mathrm{Out}(F_n)$, the length
function $d_{\ell, R}$ is also invariant under $\mathrm{Out}(F_n)$.
We need to show that it is complete. This follows from Lemma \ref{complete} below. 
\end{proof}

\begin{lem}\label{complete}
Let $c: [0, 1)\to X_n$ be a continuous path such that when $t\to 1$
the point 
$c(t)$ approaches a missing simplicial face of $X_n$, i.e., 
the total length of some cycle (or loop) of the marked metric graph represented
by $c(t)$ goes to 0. Then the image $\Pi(c(t))$ goes to the infinity of the symmetric
space $X=\mathrm{GL}(n, \R)/\mathrm{O}(n)$ when $t\to 1$.
In particular, the length of the curve $\Pi(c(t))$, and hence of the curve $c(t)$,
 $t\in [0, 1)$, is equal to infinity. 
\end{lem}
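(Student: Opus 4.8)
The plan is to run the argument of Proposition~\ref{metric-1} path-wise. By Propositions~\ref{tro-metric} and~\ref{metric-tro} the path $c$ corresponds to a family of marked metric graphs $(\ga_t, \ell_t, h_t)$ of genus $n$, and through the identification $(h_t)_{*}\colon H_1(\ga_t,\Z)\cong\Z^n$ the image $\Pi(c(t))$ is the positive definite quadratic form $Q_t$ on $\R^n\cong H_1(\ga_t,\R)$ obtained by restricting the diagonal form of Equation~(\ref{quad-form}) to the cycle space. First I would translate the hypothesis into a statement about short integral vectors of $Q_t$. If $\sigma$ is a cycle of $(\ga_t,\ell_t)$, i.e.\ an embedded circle, then, writing it as a $1$-chain $\sum_e a_e e$ with $a_e\in\{0,\pm1\}$, its homology class is a nonzero element of $H_1(\ga_t,\Z)$ (the chain $\sum_e a_e e$ lies in $\ker\partial$ and is nonzero, and a graph carries no $2$-cells), and by the computation in the proof of Proposition~\ref{pos-definite},
$$Q_t(\sigma,\sigma)=\sum_{e\in\sigma}\ell_t(e),$$
which is exactly the total length of the loop $\sigma$. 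Hence the hypothesis that the length of some cycle of $c(t)$ tends to $0$ as $t\to1$ says precisely that
$$m(Q_t):=\min_{v\in\Z^n\setminus\{0\}}Q_t(v,v)\longrightarrow 0\qquad(t\to1).$$

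Next I would invoke the Mahler compactness theorem exactly as in Proposition~\ref{metric-1}: a family of positive definite forms whose shortest nonzero integral vector tends to $0$ leaves every compact subset of $X=\mathrm{GL}(n, \R)/\mathrm{O}(n)$, so $\Pi(c(t))$ goes to infinity in $X$. One does not even need the full theorem here: on a compact $K\subset X$ the smallest eigenvalue of the corresponding forms is bounded below by some $c_K>0$, and $Q(v,v)\ge\lambda_{\min}(Q)\|v\|^2\ge\lambda_{\min}(Q)$ for every $v\in\Z^n\setminus\{0\}$, so $m(Q_t)\to0$ forces $\Pi(c(t))\notin K$ once $t$ is near $1$. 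Since $X$ with the invariant metric $d_{inv}$ is a complete, hence (by Hopf--Rinow) proper, Riemannian manifold, leaving every compact set is equivalent to $d_{inv}(\Pi(c(0)),\Pi(c(t)))\to+\infty$.

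For the assertion about lengths, the $ds_{inv}^2$-length of $\Pi\circ c$ restricted to $[0,t]$ is at least $d_{inv}(\Pi(c(0)),\Pi(c(t)))$, so it tends to $+\infty$ and the length of $\Pi\circ c$ on $[0,1)$ is infinite. Since on every open simplex the piecewise smooth metric satisfies $ds^2=\Pi^*(ds_{inv}^2)+ds_0^2\ge\Pi^*(ds_{inv}^2)$ pointwise, the $ds^2$-length of $c$ dominates the $ds_{inv}^2$-length of $\Pi\circ c$, so $c$ itself has infinite length on $[0,1)$, as claimed.

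I expect the only delicate point to be bookkeeping rather than content: as $t$ varies, $c(t)$ may traverse infinitely many open simplices, so the cycle realizing $m(Q_t)$ changes with $t$ and one must be careful not to fix a single homology class. This is harmless, because the Mahler (resp.\ eigenvalue) input uses only $m(Q_t)\to0$, which refers to no fixed vector; still, it should be phrased carefully, as should the elementary fact that ``$c(t)$ approaches a missing simplicial face'' indeed yields $m(Q_t)\to0$ (the edges of the collapsing subset have lengths tending to $0$ while that subset carries a cycle, so the total length of that cycle tends to $0$, whence $m(Q_t)\to0$).
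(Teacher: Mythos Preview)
Your argument is correct and follows essentially the same route as the paper: shortest cycle length $\to 0$ forces the period form to degenerate (the paper phrases this as $Q_t$ approaching the cone of singular semipositive forms, you phrase it via $m(Q_t)\to 0$ and Mahler compactness, as in Proposition~\ref{metric-1}), hence $\Pi(c(t))$ leaves every compactum of the complete symmetric space, so $\Pi\circ c$ has infinite length, and the pointwise inequality $ds^2\ge \Pi^*(ds_{inv}^2)$ transfers this to $c$. Your write-up is in fact more careful than the paper's about the bookkeeping (varying simplex, varying short cycle), but the underlying proof is the same.
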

 \begin{proof}
When $c(t)$ goes to a missing simplicial face of $X_n$ as $t\to 1$, the period matrix
$Q(t)$ of the metric graph corresponding to $c(t)$ converges to the subspace of
semipositive definite singular symmetric matrices.  
This implies that as $t\to 1$, 
$\Pi(c(t))$ goes to the infinity of the symmetric space $X=\mathrm{GL}(n, \R)/\mathrm{O}(n)$.
Since the symmetric space $\mathrm{GL}(n, \R)/\mathrm{O}(n)$ is a complete Riemannian manifold,
the length of the curve $\Pi(c(t))$, $t\in [0, 1)$, is equal to infinity.

When we compute the length of $c$ using $\Pi^* ds_{inv}^2$, it is equal to the length
of $\Pi(c(t))$. Since the length element of $ds^2$ is greater than or equal to $\Pi^*ds_{inv}^2$,
the length of the curve $c(t)$, $t\in [0, 1)$, is also equal to infinity.
\end{proof}

\begin{thm}\label{main2}
The outer space $X_n$ with the length metric $d_{\ell, R}$
of the piecewise Riemannian metric $ds^2$
is a complete geodesic space, and every two points are connected by
a geodesic which is a piecewise smooth curve in $X_n$. 
\end{thm}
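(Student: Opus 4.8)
The plan is to obtain the geodesic property from the Hopf--Rinow--Cohn-Vossen theorem and then to promote an abstract minimizing geodesic to a piecewise smooth one. By Proposition \ref{complete-Riemannian}, $(X_n, d_{\ell, R})$ is a complete, locally compact length space; hence by \cite[Proposition 2.5.22]{bur} it is a geodesic space, so any two points $p, q \in X_n$ are joined by a minimizing geodesic $\gamma \colon [0, L] \to X_n$, $L = d_{\ell, R}(p, q)$, parametrized by arc length. (By Lemma \ref{complete} such a $\gamma$ automatically stays away from the missing faces of $X_n$, which is already built into the completeness statement.)

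It remains to show that a minimizing $\gamma$ is piecewise smooth in the sense of the definition preceding Proposition \ref{complete-Riemannian}. Since $\gamma([0, L])$ is compact and $X_n$ is a locally finite simplicial complex, the image of $\gamma$ meets only finitely many open simplices. On any maximal open subinterval $(a, b) \subset [0, L]$ on which $\gamma$ stays inside one open simplex $\Sigma$, the restriction of $d_{\ell, R}$ to $\Sigma$ is the Riemannian distance of $(ds^2)|_\Sigma$ by Proposition \ref{complete-Riemannian}, so $\gamma|_{(a, b)}$ is a Riemannian geodesic of $(\Sigma, ds^2)$ and in particular smooth; moreover, by Proposition \ref{smooth-extension} the metric $(ds^2)|_\Sigma$ extends smoothly across the faces of $\Sigma$, so $\gamma|_{(a, b)}$ extends to a smooth curve on $[a, b]$ with values in $\overline{\Sigma}$ and with well-defined velocities at the endpoints. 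Thus the only remaining point is to check that there are only finitely many such maximal subintervals, equivalently that the set of $t$ with $\gamma(t)$ lying in a proper face of some simplex is finite; this yields a finite partition $0 = t_0 < \dots < t_n = L$ exhibiting $\gamma$ as a piecewise smooth geodesic.

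I expect this last finiteness statement to be the main obstacle: ruling out \emph{Zeno-type} oscillation of the minimizing geodesic across a stratum. The approach would be local. Near a point $\gamma(t_0)$ lying in a stratum $S$, only finitely many closed simplices $\overline{\Sigma}$ contain $\gamma(t_0)$ (local finiteness); their smooth metric extensions agree on common faces by Proposition \ref{glue-metric}; and, up to the $\mathrm{Out}(F_n)$-action, there are only finitely many combinatorial types of simplices by Proposition \ref{finite-cell}. Replacing each $(ds^2)|_{\overline{\Sigma}}$ by its value at $\gamma(t_0)$ produces a Euclidean simplicial cone (glued along faces, possibly not a manifold) approximating $X_n$ near $\gamma(t_0)$, and on such a cone a minimizing geodesic is straight inside each piece and bends only at the faces; an $\varepsilon$-ball comparison using the smoothness of $ds^2$ then bounds the number of simplices a minimizing geodesic of $X_n$ can meet within a fixed neighborhood of $\gamma(t_0)$. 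This is the analogue, for our smoothly varying Riemannian metric, of the fact that a geodesic in a piecewise-Euclidean complex built from finitely many cell shapes meets only finitely many cells. Covering $[0, L]$ by finitely many such neighborhoods and summing the local bounds gives the desired finite partition, completing the proof.
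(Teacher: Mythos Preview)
Your approach matches the paper's: both invoke Proposition \ref{complete-Riemannian} and \cite[Proposition 2.5.22]{bur} (Hopf--Rinow--Cohn-Vossen) for the geodesic property, and then argue that a minimizing geodesic is piecewise smooth because the length structure comes from a piecewise smooth Riemannian metric on a locally finite simplicial complex. The paper's proof is in fact considerably terser than yours on the second step: it simply appeals to ``the usual argument in Riemannian geometry (see \cite{lee} for example)'' and to \cite[Theorem 2.4.15]{pf}, without isolating or addressing the Zeno-type oscillation issue you raise. So your treatment is more careful than the paper's, not less; the Euclidean-cone comparison you sketch is exactly the kind of ingredient hidden behind the paper's citation of \cite{pf}, and your identification of this as the substantive point is well placed.
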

\begin{proof}
Since $X_n$ is locally compact and $d_{\ell, R}$ is a length distance, it follows
from \cite[Proposition 2.5.22]{bur} \cite[Theorem 2.4.15]{pf} that $(X_n, d_{\ell, R})$
is a geodesic space, i.e., every two points are connected by a geodesic segment. 
Since $(X_n, d_{\ell, R})$ is the length structure induced from a piecewise smooth
Riemann metric of a locally finite simplicial complex, it follows from the usual argument
in Riemannian geometry (see \cite{lee} for example) that every geodesic segment  in $X_n$
is a piecewise smooth curve.
\end{proof}

\begin{rem}
{\em Once we have endowed $X_n$ with an invariant piecewise Riemannian metric, 
it is  natural to study some geometric invariants. For example, we can study the growth
of balls in $X_n$, and the spectral theory of $X_n$ and $\mathrm{Out}(F_n)\backslash X_n$ 
with respect to suitable defined Laplacian operators.  
As discussed in \cite{pf}, there is little known for analysis on 
general stratified spaces. 
}
\end{rem}

\section{Another complete pseudo-Riemannian metric on the outer space via the lengths of pinching loops}

%Another natural problem about the Riemannian metric $ds^2$ on $X_n$ 
%is to understand the asymptotic behavior of the metric near the boundary.
In this section, we construct another Riemannian metric 
on the outer space $X_n$ whose asymptotic behaviors
are explicitly given. This is motivated by the McMullen metric for Teichm\"uller space in \cite{mc}. 

Fix a sufficiently small $\varepsilon>0$, for example, $\varepsilon < \frac{1}{6n}$.
Define a modified edge length of 1-cycles $\sigma$ of metric graphs $(\ga, \ell)$:
$\ell_\varepsilon(\sigma)$ is a positive smooth function of $\ell(\sigma)$ satisfying the
conditions
\begin{equation}
\ell_\varepsilon(\sigma)=
\begin{cases} \varepsilon, \text{\ if\ } \ell(\sigma)\geq 2 \varepsilon, \\
\ell(\sigma), \text{\ if\ } \ell(\sigma)\leq  \varepsilon.
\end{cases}
\end{equation}

For each simplex $\Sigma$ in $X_n$, define a Riemannian metric by
\begin{equation}
ds^2_\varepsilon= ds_0^2+\sum_{\sigma\text{\ with\ } \ell(\sigma)< \varepsilon} (\frac{ d \ell_\varepsilon(\sigma)}{\ell_\varepsilon(\sigma)})^2,
\end{equation}
where the sum is over all cycles whose lengths are less than $\varepsilon$.

It is clear that for each simplex $\Sigma$, $ds^2_\varepsilon$ defines a Riemannian metric.
This defines a stratified Riemannian metric on $X_n$. 
We note that a sequence of marked metric graphs converges to some missing
simplicial complexes of $X_n$ if and only if there are cycles (or loops) $\sigma$
whose lengths $\ell(\sigma)$ converge to 0. Such loops are called {\em pinching loops}
of metric graphs
and they correspond to pinching geodesics on hyperbolic surfaces.

By the proof of Proposition \ref{smooth-extension},
the length function $\ell(\sigma)$ and hence the modified length function
$\ell_\varepsilon(\sigma)$ is a smooth function on the closure $\overline{\Sigma}$
in $X_n$, and the Riemannian metric $ds^2_\varepsilon$
extends to a smooth Riemannian metric on $\overline{\Sigma}$. 
Then by the proof of Proposition \ref{glue-metric},
we can prove the following proposition.

\begin{prop}
The Riemannian metric $ds^2_\varepsilon$ on the stratified space
 $X_n$ defines an $\mathrm{Out}(F_n)$-invariant piecewise smooth
Riemannian metric on $X_n$. 
\end{prop}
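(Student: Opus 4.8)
\emph{Proof strategy.} The plan is to run, essentially word for word, the two arguments already used for Propositions~\ref{smooth-extension} and~\ref{glue-metric}: the only part of $ds^2_\varepsilon$ beyond the flat metric $ds_0^2$ is assembled from the cycle-length functions $\ell(\sigma)$, and these behave on the closed simplices exactly as the period map $\Pi$ did there. Concretely I would establish, in order: (i) on each open simplex $\Sigma$ of $X_n$ the tensor $ds^2_\varepsilon$ is a genuine smooth Riemannian metric; (ii) $ds^2_\varepsilon$ extends to a smooth Riemannian metric on the closure $\overline{\Sigma}$ in $X_n$; (iii) for simplices $\Sigma_1,\Sigma_2$ with $\overline{\Sigma_1}\cap\overline{\Sigma_2}\neq\emptyset$ the two extensions agree on the intersection, so that $\{ds^2_\varepsilon|_\Sigma\}$ glues to a piecewise smooth Riemannian metric on the simplicial complex $X_n$; and (iv) this metric is $\mathrm{Out}(F_n)$-invariant.

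For (i) and (ii): fix a marked graph $(\ga,h)$ with edges $e_1,\dots,e_k$, so the coordinates on $\Sigma_{(\ga,h)}$ are the edge lengths $\ell(e_i)>0$ with $\sum_i\ell(e_i)=1$. For every cycle $\sigma$ of $\ga$ the length $\ell(\sigma)=\sum_{e\in\sigma}\ell(e)$ is linear, hence smooth, on $\Sigma$, and $\ell_\varepsilon(\sigma)$ is a smooth positive function of $\ell(\sigma)$; since $\ga$ has only finitely many cycles, $\sum_\sigma(d\ell_\varepsilon(\sigma)/\ell_\varepsilon(\sigma))^2$ is a smooth, positive semi-definite symmetric $2$-tensor on $\Sigma$, and adding the positive-definite $ds_0^2$ keeps it positive definite, which gives (i). For (ii) I would repeat the proof of Proposition~\ref{smooth-extension}: a point $p\in\overline{\Sigma}$ corresponds to a metric graph in which some $\ell(e_i)$ vanish but every cycle still has positive total length, so perturbing all $\ell(e_i)$ (now allowed positive, zero or negative) under $\sum_i\ell(e_i)=1$ yields a neighborhood $U_p$ of $p$ in the ambient affine space on which every $\ell(\sigma)$ stays positive; hence $\ell_\varepsilon(\sigma)$ extends smoothly and positively to $U_p\cup\Sigma$, and $ds_0^2+\sum_\sigma(d\ell_\varepsilon(\sigma)/\ell_\varepsilon(\sigma))^2$ extends to a smooth Riemannian metric there. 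Covering $\overline{\Sigma}$ by such $U_p$ gives the extension.

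For (iii) and (iv): if $\Sigma_3$ is a common simplicial face of $\overline{\Sigma_1}$ and $\overline{\Sigma_2}$, then, exactly as in the proof of Proposition~\ref{glue-metric}, for $p\in\Sigma_3$ and $v$ tangent to $\Sigma_3$ the number $\langle v,v\rangle$ computed from either extension coincides with the value of $ds_0^2+\sum_\sigma(d\ell_\varepsilon(\sigma)/\ell_\varepsilon(\sigma))^2$ on $\Sigma_3$ itself, because the relevant cycle-length functions depend only on the edge lengths carried by $\Sigma_3$; hence the extensions agree on $\Sigma_3$, so $ds^2_\varepsilon$ is a piecewise smooth Riemannian metric on $X_n$ in the sense of \cite[\S 2.4]{pf}. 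For invariance, $\mathrm{Out}(F_n)$ acts on $X_n$ by simplicial maps carrying the edge-length coordinates of one simplex affinely onto those of another (up to the relabelling induced by a graph isomorphism), in particular sending cycles to cycles of equal length; since $ds_0^2$ is $\mathrm{Out}(F_n)$-invariant (Proposition~\ref{simplicial-metric}) and $\sum_\sigma(d\ell_\varepsilon(\sigma)/\ell_\varepsilon(\sigma))^2$ is manifestly unchanged by such relabelling, $ds^2_\varepsilon$ is $\mathrm{Out}(F_n)$-invariant.

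The one point needing real care — the expected main obstacle — is the smoothness in (i): as a point of $\Sigma$ moves and a cycle length $\ell(\sigma)$ crosses the range $[\varepsilon,2\varepsilon]$ in which its contribution is switched off, the summand $(d\ell_\varepsilon(\sigma)/\ell_\varepsilon(\sigma))^2$ must rejoin the sum smoothly rather than jump. This forces the reading that the sum runs over \emph{all} cycles (equivalently, over those with $\ell(\sigma)<2\varepsilon$, the rest contributing $0$), together with the choice of the interpolating profile defining $\ell_\varepsilon$ to be $C^\infty$ with $\ell_\varepsilon'$ vanishing to infinite order at $\ell(\sigma)=2\varepsilon$; with that convention $d\ell_\varepsilon(\sigma)\equiv 0$ once $\ell(\sigma)\geq 2\varepsilon$, the summand is globally smooth on $\Sigma$, and everything else reduces to the bookkeeping of the two preceding propositions.
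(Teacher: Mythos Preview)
Your proposal is correct and follows exactly the same route as the paper: the paper's own argument is literally a two-sentence pointer to the proofs of Propositions~\ref{smooth-extension} and~\ref{glue-metric}, noting that the cycle-length functions $\ell(\sigma)$ (hence $\ell_\varepsilon(\sigma)$) extend smoothly to $\overline{\Sigma}$ and that the gluing then works verbatim. You have simply unpacked these references in detail and, in addition, flagged and resolved the smoothness issue at the threshold $\ell(\sigma)=2\varepsilon$ that the paper leaves implicit.
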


By arguments similar to those of the proofs of Proposition
\ref{complete-Riemannian} and Theorem \ref{main2}, we can prove the following.

\begin{thm}\label{main4}
The piecewise smooth Riemannian metric $ds^2_\varepsilon$
on the outer space $X_n$ defines a complete geodesic metric $d_{\ell, R, \varepsilon}$
on $X_n$ which is invariant under $\mathrm{Out}(F_n)$
such that every two points are connected by a piecewise smooth geodesic.
\end{thm}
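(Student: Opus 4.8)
The plan is to run, almost verbatim, the arguments already used for Proposition \ref{complete-Riemannian} and Theorem \ref{main2}, the only genuinely new ingredient being a replacement for the Mahler-compactness input of Lemma \ref{complete}: near a missing face the divergence of $\Pi$ is replaced by the elementary fact that a summand of the form $(d\ell/\ell)^2$ in a length element puts the locus $\ell=0$ at infinite distance.

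First I would dispose of the formal points. For each $1$-cycle $\sigma$ the modified length $\ell_\varepsilon(\sigma)$ is a positive smooth function of $\ell(\sigma)$, the function $\ell(\sigma)$ is $\mathrm{Out}(F_n)$-invariant (the action only alters markings) and, by the proof of Proposition \ref{smooth-extension}, extends smoothly to the closure $\overline{\Sigma}$ of every simplex; hence $ds^2_\varepsilon$ is $\mathrm{Out}(F_n)$-invariant and, as in the proof of Proposition \ref{glue-metric}, a piecewise smooth Riemannian metric on the locally finite simplicial complex $X_n$. Exactly as in the proof of Proposition \ref{complete-Riemannian}, it follows that $d_{\ell,R,\varepsilon}$ is an $\mathrm{Out}(F_n)$-invariant, locally compact length metric on $X_n$ that induces the original topology and satisfies $d_{\ell,R,\varepsilon}(p,q)<+\infty$ for all $p,q$.

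The core of the proof is completeness, and here Lemma \ref{complete} is what must be redone. As in the proof of Proposition \ref{metric-1}, since $X_n$ is a locally finite simplicial complex with only its missing faces missing, it suffices to show that any path $c\colon[0,1)\to X_n$ along which the total length $\ell(\sigma)(c(t))$ of some cycle $\sigma$ of the represented marked metric graph tends to $0$ as $t\to 1$ has infinite $d_{\ell,R,\varepsilon}$-length. Choose $t_0<1$ with $\ell(\sigma)(c(t_0))<\varepsilon$; for $t\in[t_0,1)$ we then have $\ell_\varepsilon(\sigma)=\ell(\sigma)$, so the term $(d\ell_\varepsilon(\sigma)/\ell_\varepsilon(\sigma))^2$ genuinely contributes to $ds^2_\varepsilon$ along $c$, and therefore
\[
\ell(c)\ \ge\ \int_{t_0}^{1}\left|\frac{d}{dt}\log \ell(\sigma)(c(t))\right|\,dt\ \ge\ \lim_{b\to 1}\Bigl|\log \ell(\sigma)(c(b))-\log \ell(\sigma)(c(t_0))\Bigr|\ =\ +\infty ,
\]
the last equality because $\ell(\sigma)(c(b))\to 0$. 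This is the graph-theoretic shadow of the statement that the cusp $\{0<y<\varepsilon\}$ with metric $ds_0^2+dy^2/y^2$ lies at infinite distance from $y=0$. Consequently no $d_{\ell,R,\varepsilon}$-Cauchy sequence can run out to a missing face, and together with local compactness this gives completeness of $(X_n,d_{\ell,R,\varepsilon})$.

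With completeness and local compactness established, the rest is automatic: by \cite[Proposition 2.5.22]{bur} (equivalently \cite[Theorem 2.4.15]{pf}) a complete, locally compact length space is a geodesic space, so every two points of $X_n$ are joined by a geodesic segment, and since the length structure comes from a piecewise smooth Riemannian metric on a locally finite simplicial complex, the usual first-variation argument (see \cite{lee}) shows every such geodesic is a piecewise smooth curve, smooth on each stratum it meets. The step I expect to need the most care is the reduction step inside the completeness argument: one must verify that a path escaping to a missing face really does force $\ell(\sigma)(c(t))\to 0$ for a single cycle $\sigma$, uniformly across the possibly many simplices the path passes through near that face, and that the logarithmic divergence above survives changes of combinatorial type of the graph along $c$. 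Local finiteness of $X_n$ and the fact that pinching a loop only shortens the cycles running through it make this go through, precisely as in the proof of Proposition \ref{metric-1}.
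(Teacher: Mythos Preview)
Your proposal is correct and follows exactly the route the paper indicates: the paper itself gives no detailed proof of Theorem \ref{main4}, stating only that it follows ``by arguments similar to those of the proofs of Proposition \ref{complete-Riemannian} and Theorem \ref{main2},'' and you have carried out precisely those arguments, correctly replacing the Mahler-compactness input of Lemma \ref{complete} by the elementary logarithmic divergence of the summand $(d\ell_\varepsilon(\sigma)/\ell_\varepsilon(\sigma))^2$. Your closing caveat about tracking the pinching cycle across changes of combinatorial type is the right thing to flag, and your resolution via local finiteness and the marking is adequate.
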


\section{Finite Riemannian volume of the quotient $\mathrm{Out}(F_n)\backslash X_n$}

For a Riemannian manifold, a natural invariant is its volume.
As mentioned before, with respect to many natural metrics on $\T_{g,n}$,
the quotient $\Mod_{g, n}\backslash \T_{g,n}$ has finite volume.
Note that  the locally symmetric space
$\mathrm{GL}(n, \Z)\backslash \mathrm{GL}(n, \R)/\mathrm{O}(n)$ has infinite volume.
Since elements of $\mathrm{GL}(n, \Z)$ have determinant equal to $\pm 1$, 
$\mathrm{GL}(n, \Z)$ also
acts on $\mathrm{SL}(n, \R)/\mathrm{SO}(n,\Z)$, which is a symmetric space of 
noncompact type,
and the quotient space $\mathrm{GL}(n, \Z)\backslash \mathrm{SL}(n, \R)/\mathrm{SO}(n,\Z)$
 has finite volume.
Or equivalently,  we note that $\mathrm{GL}(n, \Z)$ contains $\mathrm{SL}(n, \Z)$ as a subgroup of finite index,
and the quotient $\mathrm{SL}(n, \Z)\backslash \mathrm{SL}(n, \R)/\mathrm{SO}(n,\Z)$ has finite volume.

If we pursue the analogy between the three pairs of transformation groups
 $(\mathrm{Out}(F_n), X_n),$ $(\T_g, \Mod_g)$,
$(\mathrm{SL}(n, \Z), \mathrm{SL}(n, \R)/\mathrm{SO}(n))$, 
then we expect that $\mathrm{Out}(F_n)$ is a lattice
and the following conjecture seems natural.

\begin{con}
The volume of $\mathrm{Out}(F_n)\backslash X_n$ with respect to either of 
the Riemannian metrics $ds^2$, $ds^2_{\varepsilon}$ defined in the previous section
is finite.
\end{con}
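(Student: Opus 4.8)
\emph{Proof proposal.} The natural attack is to localize. By Proposition~\ref{finite-cell} there are only finitely many $\mathrm{Out}(F_n)$-orbits of open simplices $\Sigma_{(G,h)}$ and each has a finite stabilizer, so
$$\mathrm{vol}\big(\mathrm{Out}(F_n)\backslash X_n\big)=\sum_{[G,h]}\frac{1}{|\mathrm{Stab}_{(G,h)}|}\,\mathrm{vol}\big(\Sigma_{(G,h)}\big),$$
the sum over the finitely many top-dimensional orbit representatives (the lower-dimensional simplices are null). Hence it suffices to prove $\mathrm{vol}(\Sigma_{(G,h)})<\infty$ for each trivalent $(G,h)$. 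The simplex $\Sigma=\Sigma_{(G,h)}$ is bounded, and its boundary faces are of two kinds: the faces lying in $X_n$, obtained by collapsing a forest, near which $ds^2$ (resp.\ $ds^2_\varepsilon$) extends to a smooth metric by Proposition~\ref{smooth-extension} and so contribute a finite amount; and the missing faces, obtained by collapsing a subgraph $\Gamma_0\subseteq G$ of positive genus. Covering a neighbourhood of $\partial\Sigma$ by finitely many charts, the problem reduces to bounding $\int_U\sqrt{\det g}$ over a chart $U$ adapted to a single missing face.

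Near a missing face collapsing the edge set $E_0=E(\Gamma_0)$, $|E_0|=d$, the idea is to separate scale from shape: put $r=\sum_{e\in E_0}\ell(e)\to 0$, let $\omega=(\ell(e_i)/r)_i$ vary over a compact subset of the interior of the $(d-1)$-simplex (its boundary corresponds to shallower faces, handled by shrinking $U$), and let $v$ collect the remaining bounded edge lengths. One then needs the asymptotics of $g$ as $r\to0$: in the $(r,\omega,v)$ chart $g_{rr}\asymp r^{-2}$ and $g_{vv}\asymp 1$, while the behaviour of $g_{\omega\omega}$ is the whole point. If $\Gamma_0$ is a simple circuit (after absorbing pendant edges, which are benign forest directions) --- the only possibility when $n=2$ --- then the unique short circuit in $E_0$ uses every edge of $E_0$, so its length is $r$ times a constant; the shape is thus invisible to the period map and to the logarithmic terms, $g_{\omega\omega}$ inherits its leading part only from $ds_0^2$ and is $\asymp r^{2}$ (it is $\asymp r$ for $ds^2$), whence $\det g\asymp r^{-2}r^{2(d-1)}$ (resp.\ $r^{-2}r^{\,d-1}$) and $\sqrt{\det g}\,d\ell_1\cdots d\ell_d=\sqrt{\det g}\,r^{d-1}\,dr\,d\omega$ is integrable at $r=0$ since $d\ge 2$. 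This proves the case $n=2$.

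The hard part --- and where the strategy meets a genuine obstruction --- is the missing faces for which $\Gamma_0$ has genus $\ge 2$. Such faces already occur for $n=3$: take $G$ with two vertices joined by four parallel edges and let $E_0$ consist of three of them (a theta graph), whose collapse is a missing face of $\overline\Sigma$. For such $E_0$ the period form $\tilde Q_0(\omega)=Q|_{H_1(\Gamma_0)}/r$ depends on $\omega$ through an immersion into the symmetric space, and simultaneously the short circuits contained in $E_0$ span all of $\mathbf R^{E_0}$; consequently $g_{\omega\omega}$ is \emph{non-degenerate} (bounded above and below) for both $ds^2$ and $ds^2_\varepsilon$. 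Then $\det g\asymp r^{-2}$ in the $(r,\omega,v)$ chart and the volume density near the face is $\asymp r^{-1}\,dr\,d\omega\,dv$, which is not integrable at $r=0$. Thus the scale-separation argument, rather than proving the conjecture, indicates that as literally stated it fails for $n\ge 3$; to rescue it one should either restrict to $n=2$, or damp $ds^2$ and $ds^2_\varepsilon$ near the missing faces that collapse positive-genus subgraphs --- e.g.\ by an extra weight factor suppressing the shape directions of such subgraphs --- after which the same argument should yield finite volume.
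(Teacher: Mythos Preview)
The statement is a \emph{conjecture}; the paper does not prove it in general. What the paper does is reduce to finitely many top-dimensional simplices (exactly as you do) and then verify the case $n=2$ by a direct coordinate computation of the period matrix of the theta graph and its inverse (Propositions~\ref{finite-vol-1} and~\ref{finite-vol-2}), leaving $n\ge 3$ open with the remark that ``a more detailed computation'' might handle it. Your scale/shape decomposition $(r,\omega)$ is a cleaner and more conceptual route to the $n=2$ case than the paper's explicit matrix calculation, and it correctly isolates why genus-$1$ collapses are harmless: the unique short circuit has length $r$ times a constant, so the shape $\omega$ is invisible to the singular part of the metric.

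Your counterexample for $n\ge 3$, however, has a real gap. You take $G$ to be two vertices joined by four parallel edges, but this graph is not trivalent --- both vertices have valence $4$ --- so $\Sigma_{(G,h)}$ is a $3$-simplex, not a top-dimensional $5$-simplex, and contributes zero to the $5$-dimensional volume you set out to compute. Worse, the phenomenon does not lift: in a trivalent graph, any subgraph isomorphic to a theta would have both its vertices of valence $3$, hence would be an entire connected component of $G$; since $G$ is connected of genus $3$ and a theta has genus $2$, no trivalent genus-$3$ graph contains a theta subgraph. More generally, any proper subgraph $\Gamma_0$ of a connected trivalent $G$ has vertices of valence $\le 2$ in $\Gamma_0$, so $\Gamma_0$ carries ``tree-like'' edge directions along which the circuit lengths are coupled; your assertion that ``the short circuits contained in $E_0$ span all of $\mathbf R^{E_0}$'' fails in such cases (for instance, for $K_4$ minus one edge the three circuits span only a $3$-dimensional subspace of $\mathbf R^5$). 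Thus $g_{\omega\omega}$ is typically degenerate on part of the shape space, the determinant picks up additional factors of $r$, and your divergence estimate $\sqrt{\det g}\asymp r^{-1}$ does not follow. The heuristic is suggestive, but to turn it into either a proof or a genuine counterexample you must work inside an actual top-dimensional simplex and track precisely which $\omega$-directions are captured by the short circuits of $\Gamma_0$.
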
 

By definition, the volume of a simplicial complex with respect to a piecewise Riemannian metric
is the sum of volumes of the top dimensional simplices.
By Proposition \ref{finite-cell}, there are only finitely $\mathrm{Out}(F_n)$-orbits of simplices.
Therefore, it suffices to decide whether for each top dimensional simplex $\Sigma$
of $X_n$, the volume of $\Sigma$ with respect to $ds^2$ or $ds^2_\varepsilon$
is finite.

\begin{prop}\label{finite-vol-1}
When $n=2$, the volume of every 2-simplex $\Sigma$  in $X_n$ with respect to the Riemannian
metric $ds^2$ is finite, and hence the volume of $\mathrm{Out}(F_n)\backslash X_n$ 
with respect to the Riemannian metric $ds^2$  is finite. 
\end{prop}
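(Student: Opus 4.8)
The plan is to reduce the statement to one explicit two-variable estimate. By Proposition~\ref{finite-cell} there are only finitely many $\mathrm{Out}(F_2)$-orbits of top-dimensional simplices of $X_2$, and since these are $(3n-4)=2$-dimensional, a short enumeration of trivalent genus-$2$ graphs with no separating edge and no vertex of valence $\le 2$ shows that there is in fact a \emph{single} orbit, represented by the simplex $\Sigma$ of the theta graph $\Theta$ (two vertices joined by three edges $e_1,e_2,e_3$). So it suffices to prove $\mathrm{vol}_{ds^2}(\Sigma)<\infty$, where $ds^2=\Pi^{*}(ds_{inv}^2)+ds_0^2$.

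First I would compute $\Pi$ on $\Sigma$ explicitly. Orienting the three edges from one vertex to the other and using the basis $\alpha=e_1-e_2$, $\beta=e_2-e_3$ of $H_1(\Theta,\Z)$, the definition in Equation~(\ref{quad-form}) gives the period matrix
\[
Q(\ell)=\begin{pmatrix}\ell_1+\ell_2 & -\ell_2\\ -\ell_2 & \ell_2+\ell_3\end{pmatrix},
\qquad \det Q=\ell_1\ell_2+\ell_1\ell_3+\ell_2\ell_3 .
\]
Thus $\Pi$ is the restriction to the open simplex $\{\ell_i>0,\ \ell_1+\ell_2+\ell_3=1\}$ of an affine map into the cone of positive definite $2\times2$ matrices $\mathrm{GL}(2,\R)/\mathrm{O}(2)$, and it is an immersion on the interior (the constant matrices $M_i=\partial_{\ell_i}Q$ in the chart $\ell_3=1-\ell_1-\ell_2$ are linearly independent). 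In this chart, with the invariant metric $ds_{inv}^2=\mathrm{tr}\big((Q^{-1}dQ)^2\big)$, the metric tensor of $ds^2$ is $g_{ij}=\mathrm{tr}(Q^{-1}M_iQ^{-1}M_j)+c_{ij}$, where $(c_{ij})$ is the constant positive definite matrix of $ds_0^2$; any other invariant metric on $\mathrm{GL}(2,\R)/\mathrm{O}(2)$ is globally bi-Lipschitz to this one (since $\mathrm{O}(2)$ acts irreducibly on the traceless symmetric matrices, invariant metrics form a two-parameter family of product metrics on $\R_{>0}\times\H^2$), so the choice is irrelevant for finiteness. The quantity to bound is $\int_\Sigma \sqrt{\det(g_{ij})}\,d\ell_1\,d\ell_2$.

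Next I would localize the blow-up. The open faces $\{\ell_i=0\}$ actually belong to $X_2$ (a single edge of $\Theta$ carries no loop; these faces are the $R_2$-simplex), and there $\det Q>0$, so $g$ is bounded near them. The only place $\det Q\to0$ — hence the only place $g$ can degenerate — is near the three missing corners of $\Sigma$, and by the $S_3$-symmetry of $\Theta$ it suffices to analyse one, say $\ell_1=s\to0$, $\ell_2=t\to0$, $\ell_3\to1$. The crude bound $\|Q^{-1}\|\le \mathrm{tr}(Q)/\det Q=O\big((s+t)^{-1}\big)$ only yields $\sqrt{\det g}=O\big((s+t)^{-2}\big)$, which is \emph{not} integrable against $d\ell_1\,d\ell_2$ near the corner; the substance of the proof is a cancellation in the Gram determinant. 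Setting $N_{ij}=(\det Q)^{2}\,\mathrm{tr}(Q^{-1}M_iQ^{-1}M_j)$, a direct expansion gives $N_{11}=1+a$, $N_{22}=1+b$, $N_{12}=1+c$ with $a,b,c=O(s+t)$ and — this is the key identity — $a+b=c$ exactly. Hence $N_{11}N_{22}-N_{12}^{2}=(a+b-2c)+(ab-c^{2})=-c-(a^{2}+ab+b^{2})=2(s+t)+O\big((s+t)^{2}\big)$, while $(\det Q)^{4}=(s+t)^{4}\big(1+O(s+t)\big)$. Adding the bounded $c_{ij}$ changes neither the order of the determinant nor its leading coefficient, so $\det g\asymp(s+t)^{-3}$ and $\sqrt{\det g}\asymp(s+t)^{-3/2}$ near the corner. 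Then $\int(s+t)^{-3/2}\,ds\,dt$ over a neighbourhood of the corner is finite: in the variable $u=s+t$ it behaves like $\int_0 u^{-3/2}\cdot u\,du=\int_0 u^{-1/2}\,du<\infty$. Summing the three corner contributions and the bounded interior gives $\mathrm{vol}_{ds^2}(\Sigma)<\infty$, hence $\mathrm{vol}_{ds^2}(\mathrm{Out}(F_2)\backslash X_2)<\infty$.

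The main obstacle is exactly this cancellation: the naive operator-norm estimate on $Q^{-1}$ overcounts by one power of $s+t$ and produces a divergent integral, so one must carry out the explicit expansion of $N_{11}N_{22}-N_{12}^{2}$ and verify that its order-$(s+t)^{0}$ part vanishes identically (equivalently, that $g^{\Pi^{*}ds_{inv}^2}$ is asymptotically rank-deficient at first order), leaving a genuine first-order term whose size relative to $(\det Q)^{4}$ yields the integrable exponent $-3/2$. Once this identity is in hand, the remaining steps — boundedness of $g$ near the open faces, the change of variables $u=s+t$, and the reduction via Proposition~\ref{finite-cell} — are routine.
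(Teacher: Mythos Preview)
Your proof is correct and follows essentially the same route as the paper: reduce to the theta-graph simplex, write the period matrix explicitly, and show that near a missing vertex the area form is comparable to $(s+t)^{-3/2}\,ds\,dt$, which is integrable. The paper simply asserts this bound ``by a direct computation,'' whereas you supply the substance of that computation --- the identity $a+b=c$ forcing the order-zero term of $N_{11}N_{22}-N_{12}^{2}$ to vanish --- and you correctly flag that the crude operator-norm estimate on $Q^{-1}$ gives only $(s+t)^{-2}$, which would diverge; this is exactly the point the paper's sketch hides.
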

\begin{proof}
%The invariant Riemannian metric of the symmetric
%space $X=\mathrm{GL}(n,\R)/ \mathrm{O}(n)$ is given by the metric tensor
%$$ds^2_{inv}=\text{Tr} ( (Y^{-1} d Y)^2), $$
%where $Y\in X=\mathrm{GL}(n, \R)/ \mathrm{O}(n)$ is a symmetric matrix \cite[\S 4.1.2]{ter}.
To prove this proposition, we need to compute $\Pi^*(ds^2_{inv})$.
Let $(\ga, h)$ be a marked graph corresponding a 2-dimensional simplex $\Sigma$ of
$X_2$. Then $\ga$ has 3 edges $e_1, e_2, e_3$ and 2 vertices, 
with each edge connecting these two vertices. Denote the edge lengths by
$a, b, c$. Then they are characterized by the following conditions:
(1) $a, b, c\geq 0$, (2) $a+b+c=1$, (3) $a+b, a+c, b+c>0$.
In particular, the vanishing of one edge length is allowed, but not two edges simultaneously.
Therefore, $\Sigma$ has 3 missing simplicial faces of dimension 0, i.e., missing vertices,
with each corresponding to the vanishing of two edge lengths.
It suffices to show that the area of the corner near each missing vertex is zero. 
Assume that $a, b=0$ at this corner.
Let  $\varepsilon$ be a small positive number
and define a corner $\Omega_\varepsilon$ by $a+b\leq \varepsilon$.
The period matrix of the marked metric graph is
$$Q=\begin{pmatrix} a+b & b \\ b & 1-a\end{pmatrix},$$
and its inverse is
$$Q^{-1}=\frac{1}{ab+(a+b)(1-a-b)} \begin{pmatrix} 1-a & -b \\ -b & a+b\end{pmatrix}.$$
By \cite[\S 4.1.2]{ter}, the invariant Riemannian metric of the symmetric
space $X=\mathrm{GL}(n,\R)/\mathrm{O}(n)$ is given by the metric tensor
$$ds^2_{inv}=\text{Tr} ( (Y^{-1} d Y)^2), $$
where $Y\in X$ is a symmetric matrix in $X$.
Therefore, the induced  Riemannian metric on $\Sigma$
is $ds^2=ds^2_0+\text{Tr} ( (Q^{-1} d Q)^2)$.
We estimate the growth of $ds^2$ when $a, b\to 0$.
The term $ds_0$ is bounded and we ignore it.
Since $\frac{1}{ab+(a+b)(1-a-b)}$ is comparable with $\frac{1}{a+b}$,
and $$d Q=\begin{pmatrix} ad +db & db \\a b & - da\end{pmatrix},$$
by a direct computation, 
%$$\text{Tr} ( (\begin{pmatrix} 1-a & -b \\ -b & a+b\end{pmatrix}
% d Q)^2)$$  is bounded by $(a+b)da^2+(a+b)d b^2$.
%Therefore, 
%the term $\text{Tr} ( (Q^{-1} d Q)^2)$ is bounded by
%a multiple of $\frac{1}{(a+b)} (da^2+d b^2)$,
 the area form of the Riemannian metric $ds^2$
is bounded from above by a multiple of
$\frac{1}{(a+b)^{3/2}} da\wedge d b$.
By dividing the  region $\Omega_\varepsilon$ into two subregions according to 
$(I): a\leq b$, $(II): b\leq a$, we can show easily that 
the area of $\Omega_\varepsilon$ with respect to $ds^2$ is finite.  
\end{proof}

\begin{rem}
{\em 
In the above estimate, the assumption that $n=2$ was used for the explicit computation.
 It is conceivable that a more detailed computation will allow one to prove the general case. 
}
\end{rem}

Using the explicit form of the metric $ds^2_\varepsilon$, we can also show that
in the corner $\Omega_\varepsilon$ of a simplex $\Sigma$ as above,
$ds^2_\varepsilon$ is comparable with 
$\frac{(d a+ d b)^2}{(a+b)^2}+ da^2+ db ^2$, and that the area form is bounded
by  a multiple of $\frac{1}{a+b} ad \wedge db$.
Therefore, we can prove that the area of the corner is finite
with respect to the Riemannian metric $ds^2_\varepsilon$
and prove the following result.

\begin{prop}\label{finite-vol-2}
When $n=2$, the volume of every 2-simplex in $X_n$ with respect to the Riemannian
metric $ds^2_\varepsilon$ is finite, and hence the volume of $\mathrm{Out}(F_n)\backslash X_n$
with respect to the Riemannian metric $ds^2_\varepsilon$ is finite. 
\end{prop}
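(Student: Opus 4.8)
The plan is to follow the proof of Proposition~\ref{finite-vol-1} almost verbatim, using in place of the $\mathrm{Tr}\bigl((Q^{-1}dQ)^2\bigr)$ computation the explicit description of $ds^2_\varepsilon$ near a missing face that is recorded in the remark preceding the statement. By Proposition~\ref{finite-cell} there are only finitely many $\mathrm{Out}(F_n)$-orbits of top-dimensional simplices, and by definition the $ds^2_\varepsilon$-volume of $\mathrm{Out}(F_n)\backslash X_n$ is the sum of the $ds^2_\varepsilon$-volumes of a finite set of representative $2$-simplices $\Sigma$; so it suffices to prove each such $\Sigma$ has finite $ds^2_\varepsilon$-volume. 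First I would recall, as in Proposition~\ref{finite-vol-1}, that for $n=2$ the graph underlying $\Sigma$ has $2$ vertices and $3$ edges $e_1,e_2,e_3$ (each joining the two vertices), of lengths $a,b,c$ normalized by $a+b+c=1$, that its three $1$-cycles $e_i\cup e_j$ have lengths $a+b$, $a+c$, $b+c$, and that $\overline{\Sigma}$ is the closed triangle $\{a,b,c\ge 0\}$ with its three vertices deleted, a point leaving $X_2$ precisely when one of the three cycle-lengths tends to $0$.

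Next I would cover $\overline{\Sigma}\cap X_2$ by a compact ``core'' $K=\{\,a+b\ge\varepsilon,\ a+c\ge\varepsilon,\ b+c\ge\varepsilon\,\}$ and the three ``corner'' regions $\Omega^{(ij)}=\{(e_i\cup e_j)\text{-length}\le\varepsilon\}$; since the three cycle-lengths sum to $2$ and $\varepsilon<\tfrac1{12}$, any point outside $K$ has exactly one short cycle and so lies in one of the $\Omega^{(ij)}$, so these four sets do cover. On $K$ each cycle-length is bounded below by $\varepsilon$, so $ds^2_\varepsilon$ restricts to a bounded piecewise-smooth Riemannian metric on a compact set and $\int_K dV_\varepsilon<\infty$ is immediate. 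By symmetry it then remains to bound the volume of one corner, say $\Omega_\varepsilon=\Omega^{(12)}=\{a+b\le\varepsilon\}$, parametrized by $(a,b)$ with $c=1-a-b$. Since $\varepsilon<\tfrac1{12}$ forces $a,b<\tfrac1{12}$ there, the cycles $e_1\cup e_3$ and $e_2\cup e_3$ have lengths $1-b,\,1-a>2\varepsilon$ and contribute nothing, while $\ell_\varepsilon(e_1\cup e_2)=a+b$; hence on $\Omega_\varepsilon$ one has $ds^2_\varepsilon=ds_0^2+\bigl(d(a+b)/(a+b)\bigr)^2$, which in the $(a,b)$-chart is comparable to $\tfrac{(da+db)^2}{(a+b)^2}+da^2+db^2$ exactly as stated before the proposition.

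Then I would carry out the elementary volume estimate. The Gram determinant of the model tensor $\tfrac{(da+db)^2}{(a+b)^2}+da^2+db^2$ in the coordinates $(a,b)$ equals $1+\tfrac{2}{(a+b)^2}$, so the volume form of $ds^2_\varepsilon$ on $\Omega_\varepsilon$ is bounded above by a constant multiple of $\tfrac{1}{a+b}\,da\wedge db$; passing to the coordinates $u=a+b\in(0,\varepsilon]$, $a\in[0,u]$ (Jacobian of absolute value $1$) gives $\int_{\Omega_\varepsilon}\tfrac{da\,db}{a+b}=\int_0^\varepsilon\bigl(\int_0^u\tfrac{da}{u}\bigr)\,du=\int_0^\varepsilon du=\varepsilon<\infty$. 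Adding the three corner bounds to the core bound yields $\int_\Sigma dV_\varepsilon<\infty$, and summing over the finitely many orbit representatives then shows $\mathrm{Out}(F_2)\backslash X_2$ has finite $ds^2_\varepsilon$-volume.

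I do not expect a serious obstacle here: unlike the $\mathrm{Tr}\bigl((Q^{-1}dQ)^2\bigr)$ computation of Proposition~\ref{finite-vol-1}, the metric $ds^2_\varepsilon$ already has the transparent form $ds_0^2+(d(a+b)/(a+b))^2$ near a corner. The only points needing a little care, both routine, are (i) checking that the flat metric $ds_0^2$ of the standard $2$-simplex is comparable, in the $(a,b)$-chart, to $da^2+db^2$, so that it may be absorbed into the implied constants in the comparison above, and (ii) dispensing with the measure-zero locus where the number of terms in $ds^2_\varepsilon$ changes, which does not affect the volume.
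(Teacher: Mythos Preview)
Your proposal is correct and follows essentially the same approach as the paper: reduce to a corner of a single $2$-simplex as in Proposition~\ref{finite-vol-1}, observe that there $ds^2_\varepsilon$ is comparable to $\tfrac{(da+db)^2}{(a+b)^2}+da^2+db^2$, bound the area form by a multiple of $\tfrac{1}{a+b}\,da\wedge db$, and integrate. You have supplied more detail than the paper does (the explicit Gram determinant and the change of variables $u=a+b$), but the argument is the same.
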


\begin{rem}
{\em When $n=2$, there is a natural identification of $X_2$ with the Poincare disk \cite{vog}.
It is probably true that the metric $ds^2$ is quasi-isometric to the Poincare metric.
}
\end{rem}

%A natural conjecture is that {\em the two Riemannian metrics $ds^2$ and $ds^2_\varepsilon$
%on $X_n$ are quasi-isometric.} 

In this paper, we have introduced several
 geodesic metrics $d_{1, \ell},$ $ d_{2, \ell},$ $ d_{\infty, \ell},$ $
d_{\ell, R},$ $ d_{\ell, R, \varepsilon}$ on $X_n$. We note that
$d_{1, \ell},$ $ d_{2, \ell},$ $ d_{\infty, \ell}$ are clearly quasi-isometric to each other. 
The following problem seems to be natural and tempting.

\begin{prob}
Decide whether all the above $\mathrm{Out}(F_n)$-invariant, complete geodesic metrics 
$d_{1, \ell}$,
$d_{\ell, R}, d_{\ell, R, \varepsilon}$ on $X_n$ are quasi-isometric. 
\end{prob}

%One reason for the above conjecture is  that 
%the invariant Riemannian metric of the symmetric
%space $X=\mathrm{GL}(n,\R)/ \mathrm{O}(n)$ is given by the metric tensor
%$$ds^2_{inv}=\text{Tr} ( (Y^{-1} d Y)^2), $$
%where $Y\in X=\mathrm{GL}(n, \R)/ \mathrm{O}(n)$ is a symmetric matrix \cite[\S 4.1.2]{ter}.

\end{document}